\newtheorem{theorem}{Theorem}[section]
\newtheorem{lemma}[theorem]{Lemma}
\newtheorem{corollary}[theorem]{Corollary}
\newtheorem{claim}[theorem]{Claim}
\theoremstyle{remark}
\newcommand{\binuparrow}{\mathbin{\uparrow}}
\newcommand{\T}{\text{Top}}
\newcommand{\B}{\text{Bot}}
\newcommand{\F}{\mathscr{F}}
\renewcommand{\L}{\mathcal{L}}    
\newcommand{\set}[1]{\left\{#1\right\}}
\def\rddots#1{\cdot^{\cdot^{\cdot^{#1}}}}
\DeclareMathOperator{\dist}{dist}
\title{$k$-Leaf Powers Cannot be Characterized by a Finite Set of Forbidden Induced Subgraphs for $k \geq 5$}
\date{ }
\author{
    Max Dupr\'e la Tour\thanks{McGill University: \texttt{maxduprelatour@gmail.com}} \and 
       Manuel Lafond\thanks{Universit\'e de Sherbrooke: \texttt{manuel.lafond@usherbrooke.ca}} \and 
  Ndiam\'e Ndiaye\thanks{McGill University. \texttt{ndiame.ndiaye@mail.mcgill.ca}} \and 
    Adrian Vetta\thanks{McGill University. \texttt{adrian.vetta@mcgill.ca}}
}
\begin{document}
\maketitle

\begin{abstract}
A graph $G=(V,E)$ is a $k$-leaf power if there is a tree $T$ whose leaves are the vertices of $G$ with the property that 
a pair of leaves $u$ and $v$ induce an edge in $G$ if and only if they are distance at most $k$ apart in $T$.
For $k\le 4$, it is known that there exists a finite set $\F_k$ of graphs such that the class $\L(k)$ of $k$-leaf power graphs is characterized as the set of strongly chordal graphs that do not contain any graph in $\F_k$ as an induced subgraph. 
We prove no such characterization holds for $k\ge 5$.
That is, for any $k\ge 5$, there is no finite set $\F_k$ of graphs such that $\L(k)$ is equivalent to the set of strongly chordal graphs that do not contain as an induced subgraph any graph in $\F_k$.
\end{abstract}

\section{Introduction}\label{sec:intro}

A fundamental question in graph theory concerns whether or not a graph $G=(V,E)$ can be represented (or approximated) by a simpler graph, for instance a tree $T$, while preserving the desired information from the original graph.  
The pairwise distances of $G$ often need to be summarized into sparser structures, with 
notable examples including \emph{graph spanners}~\cite{cohen2020light,ahmed2020graph,filtser2021graph,le2022near} and \emph{distance emulators}~\cite{thorup2006spanners,chang2022almost,van2022fast} which respectively ask for a subgraph of $G$ or for another graph that approximates the distances of $G$.
If the distance information to preserve only concerns ``close together'' versus ``far apart'' then this can take the following form:
given a graph $G$ and an integer $k$, does there exists a tree $T$ whose leaves are the vertices of $G$, such that distinct vertices $u$ and $v$ are adjacent in $G$  {\em if and only if} the distance $d_T(u,v)$ from $u$ to $v$ in $T$ is at most $k$?  If the answer is affirmative then $G$ is dubbed a $k$-{\em leaf power} of $T$ (and $T$ is dubbed a $k$-{\em leaf root} of $G$). 

The study of $k$-leaf powers and roots were instigated by Nishimura, Ragde and Thilikos~\cite{NISHIMURA200269}. On the applied side, these graphs are of significant interest in the field of computational biology with respect to {\em phylogenetic trees}, which aim to explain the distance relationships observed on available data between species, genes, or other types of taxa.  Indeed, $k$-leaf powers can be used to represent and explain pairs of genes that underwent a bounded number of evolutionary events in their evolution~\cite{long2020exact,hellmuth2020generalized}, or that have conserved closely related biological functions during evolution~\cite{lafond2014orthology}.
On the theory side, despite their simplicity, several fundamental graph theoretic problems concerning $k$-leaf powers remain open.
The purpose of this research is to resolve one such long-standing 
open problem. Specifically, we 
prove that the class $\L(k)$ of $k$-leaf power graphs cannot be characterized via a finite set of forbidden induced subgraphs for $k\ge 5$. In contrast, for $k\le 4$ such finite characterizations were previously shown to exist~\cite{DOM2004, BRANDSTADT2006,Brandstadt2008-4LP}.

\subsection{Background}
Let $\L(k)$ denote the class of all $k$-leaf power graphs, for $k \geq 2$. The class of all leaf power graphs is then
denoted by $\L = \bigcup_k \L(k)$.
The literature on leaf power graphs has primarily focused on two major themes. One, obtaining graphical characterizations for both the class $\L$ and the classes $\L(k)$, for fixed values of $k$. 
Two, designing efficient algorithms to recognize graphs that belong to these classes. 

Let's begin with the former theme. Here  important roles are played by chordal and strongly chordal graphs.
A graph is \emph{chordal} if every cycle of length four or more has a \emph{chord}, an edge connecting two non-consecutive vertices of cycle. A graph is \emph{strongly chordal} if it is chordal {\bf and} all its even cycles of length $6$ or more have an \emph{odd chord}, a chord connecting two vertices an odd distance apart along the cycle.
Now, it is known that every graph in $\L$ and $\L(k)$ is \emph{strongly chordal}.\footnote{In particular they do not contain, as induced subgraphs, chordless cycles of length greater than three, nor {\em sun graphs}.} To see this, first note that a leaf power graph is an induced subgraph of a power of a tree. Second,
note that trees are strongly chordal, and taking powers and induced subgraphs both preserve this property~\cite{raychaudhuri1992powers}).  
However, the reciprocal is not true: there exist strongly chordal graphs that are not leaf powers. The first such example was discovered by Brandst\"adt et al.~\cite{BRANDSTADT2010897}. Subsequently, six additional examples were identified by Nevries and Rosenke~\cite{nevries2016towards} who conjectured that any strongly chordal graph not containing any of these seven graphs as an induced subgraph is a leaf power. 
However, a weaker version of this conjecture, that there are only a finite number (rather than seven) of obstructions was
was disproved by Lafond~\cite{Lafond2017}. The author
constructed an infinite family of \emph{minimal} strongly chordal graphs that are not leaf powers (i.e., removing any vertex results in a leaf power). 

For fixed $k$, the conjecture that $\L(k)$ may be characterized by a finite set of obstructions remained open. 
Indeed, for $k\le 4$, the classes $\L(k)$ can be characterized as chordal graphs that do not contain any graph from $\F_k$ as induced subgraphs, where $\F_k$ is a finite set. Specifically:
\begin{itemize}
\item $k=2$: A graph is in $\L(2)$ {\em if and only if} it is a disjoint union of cliques. That is, $\L(2)$ is precisely the set of graphs that forbid $P_3$, the chordless path with three vertices, as an induced subgraph. Thus $|\F_2|=1$.
\item $k=3$: Dom et al.~\cite{DOM2004} gave the first characterization of $\L(3)$: a graph is in $\L(3)$ {\em if and only if} it is chordal and does not contain a bull, a dart or a gem as induced subgraph. Thus $|\F_3|=3$. Other characterizations of $\L(3)$ were later discovered~\cite{BRANDSTADT2006}
\item
$k=4$: Brandst\"adt, Bang Le and Sritharan~\cite{Brandstadt2008-4LP}
proved that a graph is in $\L(4)$ {\em if and only if} it is chordal and does not contain as induced subgraph one of a finite set $\F_4$ of graphs\footnote{Formally, they show that the set of basic 4-leaf power, where no two leaves of the leaf root share a parent, can be characterized by chordal graphs which do not have one of 8 graphs as induced subgraphs. $\F_4$ can be deduced from this set.}.
\end{itemize}
Given this, the aforementioned conjecture naturally arose: for every $k$, is the class $\L(k)$ equivalent to the set of chordal graphs that do not contain as induced subgraphs any of a finite set $\F_k$ of graphs? 

For $k=5$, Brandst\"adt, Bang Le and Rautenbach~\cite{BRANDSTADT20093843} proved this is true for a special subclass of $\L(5)$. Specifically, the \emph{distance hereditary}\footnote{A graph $G$ is distance hereditary if for all pairs of vertices $(u,v)$ in all subgraphs of $G$ either the distance is the same as in $G$ or there is no path from $u$ to $v$.} $5$-leaf power graphs are chordal graphs that do contain a set of $34$ graphs as induced subgraphs.
However, for the general case, they state
\begin{quote}
    ``{\em For $k\geq 5$, no characterization of $k$-leaf powers is known despite considerable effort. Even the characterization of $5$-leaf powers appears to be a major open problem.''~\cite{BRANDSTADT20093843}} 
\end{quote}
The contribution of this paper is to disprove the conjecture: for all $k\geq 5$, it is impossible to characterize the set of $k$-leaf powers as the set of chordal graphs which are $\F_k$-free for $|\F_k|$ finite. In fact, we show that even for the more restrictive class of strongly chordal graphs it is impossible to characterize the set of $k$-leaf powers as the set of strongly chordal graphs which are $\F_k$-free for finite $|\F_k|$. 

Let us conclude this section by discussing the second major theme
in this area, namely, efficient recognition algorithms.
The computational complexity of deciding whether or not a graph is in $\L$ is wide open.  
We remark, however, that some graphs in $\L$ have a {\em leaf rank} that is exponential in the number of their vertices, where the leaf rank of a graph $G$ is the minimum $k$ such that $G \in \L(k)$~\cite{LowerBoundsLR}.  The question of computing the leaf rank of subclasses of $\L$ in polynomial time was recently initiated in~\cite{le2023computing}.

For fixed values of $k$, though, progress has been made in designing polynomial-time algorithms for the $\L(k)$ recognition problem.  For $\L(2), \L(3)$ and $\L(4)$, this immediately follows from the above characterizations because $\F_2, \F_3$ and $\F_4$ are finite. 
In fact, all these three recognition problems can be solved in linear time; see \cite{BRANDSTADT2006,Brandstadt2008-4LP}.
Using a dynamic programming approach, Chang and Ko~\cite{Chang2007} described a linear-time algorithm for the $\L(5)$ recognition problem, and Ducoffe~\cite{ducoffe20194} proposed a polynomial-time algorithm for the $\L(6)$ recognition problem. Recently, Lafond~\cite{Lafond2023} designed a polynomial-time algorithm for the $\L(k)$ recognition problem, for any constant $k \geq 2$. 
The algorithm is theoretically efficient albeit completely impractical: the polynomial's exponent depends only on $k$ but is $\Omega(k \binuparrow\binuparrow k)$, that is, a tower of exponents $k^{k^{\rddots k}}$ of height $k$.  
We remark that the algorithm 
does not rely on specific characterizations of $k$-leaf power graphs aside from the fact that they are chordal. It appears difficult to significantly improve its running time without a better understanding of the graph theoretical structure of graphs in $\L(k)$.  Our work assists in this regard by improving our knowledge of $k$-leaf powers in terms of forbidden induced subgraphs.


\subsection{Overview and Results}
We now present an overview of the paper and our results.
In Section~\ref{sec:proof} we present our main theorem:
\begin{restatable}{theorem}{main}\label{thm:main}
    For $k\geq 5$, the set of $k$-leaf powers cannot be characterized as the set of strongly chordal graphs which are $\F_k$-free, where $\F_k$ is a finite set of graphs.
\end{restatable}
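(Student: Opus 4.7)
The proof will follow the standard minimal-obstruction template. Fix $k \geq 5$. The goal is to construct an infinite sequence $\{H_n\}_{n \geq 1}$ of strongly chordal graphs such that

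\textbf{(a)} $|V(H_n)| \to \infty$;

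\textbf{(b)} no $H_n$ belongs to $\L(k)$; yet

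\textbf{(c)} every proper induced subgraph of every $H_n$ belongs to $\L(k)$.

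Granted such a family, the theorem drops out in a few lines. Suppose some finite $\F_k$ characterized $\L(k)$ as the strongly chordal $\F_k$-free graphs. First, each $F \in \F_k$ must itself fail to be a $k$-leaf power, for otherwise $F$ would be a strongly chordal graph containing $F$ as an induced subgraph yet lying in $\L(k)$, contradicting the purported characterization. Now choose $n$ large enough that $|V(H_n)| > \max_{F \in \F_k} |V(F)|$. Since $H_n$ is strongly chordal but $H_n \notin \L(k)$, the characterization forces some $F \in \F_k$ to appear as an induced subgraph of $H_n$. By the size bound, $F$ is a \emph{proper} induced subgraph of $H_n$, so by (c) we have $F \in \L(k)$, contradicting the previous sentence. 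Thus the result hinges entirely on constructing the family.

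For the construction, a natural approach is to build $H_n$ by gluing $n$ copies of a $k$-dependent ``gadget'' in a cyclic fashion, with consecutive gadgets sharing a clique separator. The gadget should be designed so that in any candidate $k$-leaf root $T$ of $H_n$, the leaves belonging to each gadget must lie along a short piece of a common path in $T$ (a ``backbone''), with a prescribed net displacement from one end of the gadget to the other. The cyclic gluing then demands that these displacements sum to zero around the cycle, while the gadget design forces each displacement to be nonzero of a fixed sign---a contradiction showing $H_n \notin \L(k)$. Conversely, deleting any vertex $v$ breaks exactly one gadget, turning the cyclic constraint into an acyclic one; one then exhibits an explicit tree realizing the desired distances, certifying $H_n - v \in \L(k)$. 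Strong chordality must be checked gadget-by-gadget and preserved under gluing: chordal graphs sharing a clique separator glue into chordal graphs, and avoiding induced suns amounts to a local check on each gadget.

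The principal obstacle is step (b): the space of potential $k$-leaf roots is highly unconstrained, since the tree $T$ is not prescribed and its internal branching can be arbitrary. The proof must therefore identify a canonical feature---the backbone path---present in every conceivable $k$-leaf root, and extract a length or parity invariant that the cyclic arrangement obstructs. This is where the hypothesis $k \geq 5$ enters critically: for $k \leq 4$ the finite characterizations of Dom et al.~\cite{DOM2004} and Brandst\"adt et al.~\cite{Brandstadt2008-4LP} preclude any infinite minimal obstruction family, so the gadgets must exploit distance slack that only exists once $k$ is at least $5$.
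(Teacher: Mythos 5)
Your reduction of the theorem to the existence of an infinite family of large, strongly chordal, minimal non-$k$-leaf-powers is correct and matches the paper's final step. But everything that makes the theorem true is in the construction, and your proposal leaves that entirely unexecuted: no gadget is defined, and neither (b) nor (c) is proved for any concrete graph. Worse, the architecture you sketch has two concrete problems. First, gluing gadgets \emph{cyclically} along shared clique separators is not a sequence of clique sums (the last identification joins two cliques already in one component), and it does not preserve chordality in general --- e.g.\ gluing edges at single shared vertices around a cycle yields $C_n$. Your remark that ``chordal graphs sharing a clique separator glue into chordal graphs'' only covers tree-structured gluings, so strong chordality of your $H_n$ is not secured. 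Second, your requirement (c) --- that \emph{every} proper induced subgraph of $H_n$ is a $k$-leaf power --- is much stronger than what a series construction naturally delivers, and you give no mechanism for achieving it.

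The paper avoids both issues by gluing \emph{in series}, not in a cycle: a top gadget $\T$, then $n$ interior gadgets, then a bottom gadget $\B$, joined at single shared vertices (so chordality is preserved trivially). The obstruction is not a cyclic displacement invariant but a monotone propagation one: $\T$ forces some leaf at tree-distance $3$ from the junction vertex, each interior gadget propagates ``a leaf at distance $3$'' from its top junction to its bottom junction (via the implication $m_T(t_I)\geq k \Rightarrow m_T(b_I)=3$), and $\B$ forces a leaf at distance at most $k-1$ from its junction, yielding two non-adjacent leaves at tree-distance at most $k$. Minimality is then obtained indirectly: one proves only that $H_n-\T$ and $H_n-\B$ are $k$-leaf powers, takes a \emph{minimal} non-$k$-leaf-power induced subgraph $G_{k,n}$, and argues it must be connected and meet both $\T$ and $\B$, hence contain a path of length at least $n$. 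You would need to either adopt something like this series-plus-extraction strategy or supply the missing gadgets and a proof that your cyclic gluing is strongly chordal and minimally obstructed; as written, the core of the argument is absent.
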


There we discuss the three types of gadgets we need. These gadgets can be combined to form an infinite family of pairwise incomparable graphs which are not $k$-leaf powers.
We prove the main theorem modulo three critical lemmas on the gadgets. 
In Section~\ref{sec:gadgets} we present proofs of the three critical lemmas.
Finally, in Section~\ref{sec:linear} we show how to modify our proof
to derive a similar theorem for \emph{linear $k$-leaf powers}:
\begin{restatable}{theorem}{linear}\label{thm:linear}
    For $k\geq 5$, the set of linear $k$-leaf powers cannot be characterized as the set of strongly chordal graphs which are $\F_k$-free where $\F_k$ is a finite set of graphs.
\end{restatable}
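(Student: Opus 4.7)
The plan is to reuse the infinite family of gadget-composed graphs $\{G_i\}$ constructed in Theorem~\ref{thm:main}, strengthening the surrounding analysis to the linear setting. Since every linear $k$-leaf power is a $k$-leaf power, the $G_i$ are automatically strongly chordal graphs that fail to be linear $k$-leaf powers, and they remain pairwise incomparable. What must additionally be verified is that each $G_i$ is a \emph{minimal} obstruction to being a linear $k$-leaf power: for every vertex $v$, the induced subgraph $G_i - v$ must admit a leaf root of distance $k$ that is a \emph{caterpillar}. Once this is established, the standard argument applies as in Theorem~\ref{thm:main}: if a finite $\mathcal{F}_k$ characterized linear $k$-leaf powers among strongly chordal graphs, then each $G_i$ must contain some $F\in\mathcal{F}_k$ as induced subgraph; minimality forces $F=G_i$; and pairwise incomparability of the $G_i$ then forces $\mathcal{F}_k$ to be infinite.

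To upgrade the minimality claim, I would revisit the three critical lemmas of Section~\ref{sec:gadgets} and produce a linear version of each. The negative lemmas (those ruling out any $k$-leaf root for the whole composed graph) transfer verbatim, since any linear $k$-leaf root is a $k$-leaf root. The positive lemmas, which exhibit $k$-leaf roots for each punctured graph $G_i - v$, must be refined so that the exhibited roots are caterpillars. For this, I would inspect the trees produced by the original constructions and argue that in every internal branching, at most one subtree carries more than a single leaf; such a tree can then be reshaped into a caterpillar by ``unfolding'' the stray single-leaf branches along a common spine, without altering any pairwise leaf distance. For any gadget whose leaf roots stubbornly resist this flattening, I would replace it by a slightly modified variant obtained by attaching a few auxiliary pendant leaves that pin down the caterpillar shape, while preserving both strong chordality and the argument that the composed graph is not a $k$-leaf power.

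The main obstacle will be at the interfaces where two gadgets are glued together, particularly when the deleted vertex $v$ sits in or near such a junction. There the locally caterpillar roots of the two adjacent gadgets must be concatenated into a single global spine while simultaneously respecting all cross-gadget distance constraints in $G_i - v$; a naive concatenation typically forces branching at the junction. Overcoming this will probably require adjusting the gluing rule so that junction vertices are themselves placed on the common spine, or introducing a small number of ``anchor'' pendants that fix the orientation of each gadget's caterpillar. All remaining ingredients---strong chordality of the composed graphs, their pairwise incomparability, and the overall obstruction argument---can then be inherited directly from the proof of Theorem~\ref{thm:main}.
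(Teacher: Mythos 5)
Your high-level strategy matches the paper's: redo the three critical lemmas in the linear setting, with the extra requirement that the exhibited roots be caterpillar subdivisions that can be concatenated spine-to-spine, and then run the same minimal-obstruction argument. You also correctly locate the danger point, namely the junctions where gadget roots are glued. But the proposal stops exactly where the real work begins, and the two escape hatches you offer would not close the gap. First, the ``unfolding'' heuristic is not needed for the bottom and interior gadgets --- the roots $T_{\B}$, $T_I$ and $R_I$ constructed in Section~\ref{sec:gadgets} are already subdivisions of caterpillars with $b$ and $t$ attached adjacent to the extremal spine vertices, so they glue into a single spine with no modification. Second, and more importantly, the top gadget $\T = P^{k-2}$ genuinely fails: its $k$-leaf root is a caterpillar, but the distinguished vertex $t = v_{k-2}$ hangs off the \emph{middle} of the spine, so gluing $T_I^1$ at $t$ unavoidably creates a branch vertex of degree $\ge 3$ carrying two long subtrees. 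No ``slightly modified variant obtained by attaching a few auxiliary pendant leaves'' repairs this, because what is needed is a gadget admitting a linear root in which $t$ sits at the \emph{end} of the spine while still forcing $m_T(t)=3$ in \emph{every} linear $k$-leaf root. The paper replaces $\T$ wholesale by a $(k-1)$-clique $X$ fringed by vertices $y_0,\dots,y_k$ with sliding-window neighborhoods (Lemma~\ref{lem:top-caterpillar}), and the proof that any linear root must order the attachment points of $X$ along the spine and pin $x_1$ at an end --- forcing a leaf at distance exactly $3$ from $x_1$ --- is the main new technical content of Section~\ref{sec:linear}. That argument is entirely absent from your proposal, and note that it only establishes the lower-bound property for \emph{linear} roots, so ``the negative lemmas transfer verbatim'' no longer describes what must be proved once the gadget is changed.

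A secondary imprecision: you frame minimality as verifying that $G_i - v$ admits a caterpillar root for every vertex $v$ of the obstructions $G_i$ from Theorem~\ref{thm:main}. Those $G_{k,n}$ are minimal non-$k$-leaf-powers; their proper induced subgraphs are $k$-leaf powers but need not be \emph{linear} $k$-leaf powers, so reusing them does not give minimal obstructions for the linear class. The correct move (and the paper's) is to take a minimal induced subgraph of the recomposed $H_n$ that is not a linear $k$-leaf power --- minimality is then automatic --- and to force it to have $\ge n$ vertices by showing that $H_n - \T$ and $H_n - \B$ \emph{are} linear $k$-leaf powers via the spine-concatenated roots. This is a repairable bookkeeping issue, but combined with the missing top gadget it means the proposal, as written, does not yet constitute a proof.
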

Here, a linear $k$-leaf power is a graph that has a $k$-leaf root which is the subdivision of a \emph{caterpillar}. We remark that that the class of linear leaf powers can be recognised in linear time, as shown by Bergougnoux et al.~\cite{Bergougnoux}.

\section{The Proof Modulo Three Critical Lemmas}\label{sec:proof}

In this section we prove our main theorem, Theorem~\ref{thm:main},
assuming the validity of three critical lemmas.
The proofs of these lemmas form the main technical contribution of the paper and are deferred to Section~\ref{sec:gadgets}.

\subsection{Preliminaries}
Before presenting the proof of Theorem~\ref{thm:main}, we present  
necessary definitions and notations.
Let's start with a formal definition of $k$-leaf powers. Let $G = (V,E)$ be a simple finite graph, and $k\geq 2$ be an integer. $G$ is called a $k$-\emph{leaf power} if there exists a tree $T$, known as a {\em $k$-leaf root of $G$}, with the following properties:
\begin{itemize}
    \item $V$ is the set of leaves of $T$.
    \item For any pair of vertices $u,v \in V$, there is an edge $uv \in E$ {\em if and only if} the $d_T(u,v) \leq k$.
\end{itemize}
Here $d_T$ is the distance metric induced by the tree $T$ when two adjacent vertices are a distance of~$1$ apart.
To simplify the notation, we will use $d$ instead of $d_T$ when the context is clear. We will use the notation $\dist_G$ to denote distance within the graph $G$ and thus distinguish it from the distance $d_T$ induced by a leaf root $T$.

\subsection{The Proof of the Main Theorem}

To prove Theorem~\ref{thm:main}, for any $k\ge 5$, we will construct a collection of arbitrarily large strongly chordal graphs that are \emph{minimal non }$k$\emph{-leaf powers}. Specifically, these graphs have the property that any ``strict'' induced subgraph is a $k$-leaf power.

To accomplish this goal, we fix $k\ge 5$. We then begin by designing a graph $H_n$, for all $n\geq 0$,
built using three gadget graphs joined in series. First will be the {\em top gadget} and last the {\em bottom gadget}. In between will be exactly $n$ copies of the 
{\em interior gadget}.
We denote these gadget graphs by $\T, \B$ and $I$,
respectively. These gadget graphs will satisfy a set
of critical properties. To formalize these properties we require the following definition.
Given a graph $G=(V,E)$ and $T$ a $k$-leaf root of $G$. For $v\in V$, let $m_T(v)=\min_{u\in V\setminus \{v\}} d_T(u,v)$. That is, $m_T(v)$ is the shortest distance in the tree $T$ from the leaf $v$ to any other leaf $u$. 

The aforementioned properties of $\T, \B$ and $I$ are stated in the subsequent three critical lemmas.

\begin{restatable}{lemma}{top}
    \label{lem:top}
For all $k\geq 4 $, there exists a gadget graph $\T$ that contains a vertex $t\in V(\T)$ such that:
\begin{enumerate}
        \item For any $k$-leaf root $T$ of $\T$, $m_{T}(t) = 3$.
        \item There exists a $k$-leaf root $T_{\T}$ of  $\T$.
    \end{enumerate}
\end{restatable}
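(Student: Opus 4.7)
The plan is to construct an explicit small strongly chordal graph $\T$ containing a distinguished vertex $t$ and then verify the two required properties by direct analysis. The construction will incorporate two local features near $t$: first, a pair of non-adjacent neighbours $w_1, w_2 \in N_\T(t)$ serving as a \emph{non-twin witness}, which will force $m_T(t) \geq 3$; second, an additional subgraph attached to $t$ whose adjacency pattern serves as a \emph{distance-three witness}, forcing $m_T(t) \leq 3$. Both features must be compatible so that $\T$ itself remains a $k$-leaf power.

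For property~(2), I would exhibit a $k$-leaf root $T_\T$ by hand: take a caterpillar whose spine length is tailored to $k$, attach each vertex of $\T$ as a leaf to a suitable spine Steiner vertex, and verify by a finite, routine enumeration that every edge (resp.\ non-edge) of $\T$ corresponds to tree-distance $\leq k$ (resp.\ $>k$).

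For property~(1), the lower bound $m_T(t) \geq 3$ follows from the standard twin observation. In any $k$-leaf root $T$, if some other leaf $w$ shared $t$'s parent $p$, then $d_T(t, u) = d_T(w, u)$ for every other leaf $u$, so $t$ and $w$ would have identical $\T$-neighbourhoods except possibly each other, i.e.\ true twins in $\T$. Since $d_T(t,w) = 2 \leq k$, we have $w \in N_\T(t)$; but whichever of $w_1, w_2$ is not equal to $w$ lies in $N_\T(t) \setminus N_\T(w)$ by the non-edge $w_1 w_2$, contradicting twinhood.

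The main obstacle is the upper bound $m_T(t) \leq 3$. I would assume for contradiction a $k$-leaf root $T$ with $m_T(t) \geq 4$: then $t$'s parent $p$ has no leaf neighbours and none of $p$'s other neighbours has a leaf child, so all leaves lie at tree-distance $\geq 4$ from $t$. One then analyses how $t$'s $\T$-neighbours distribute among the branches leaving $p$. Any two non-adjacent $\T$-neighbours of $t$ must occupy distinct branches at $p$, since sharing an edge past $p$ would make their tree-distance at most $(d_T(\cdot,t)-1) + (d_T(\cdot,t)-1) - 2 \leq k$ in violation of the non-edge; conversely, adjacent pairs in the same branch past $p$ are subject to a tighter complementary inequality. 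The distance-three witness is designed so that these branch constraints cannot be jointly realised when $m_T(t) \geq 4$: one extracts a pair of vertices of $\T$ that is simultaneously forced by some adjacency in $\T$ to lie at tree-distance $\leq k$ and by some non-adjacency to lie at tree-distance $>k$. Carrying out this case analysis on the branch structure at $p$, while keeping track of which $\T$-neighbours of $t$ share branches, is the technical heart of the lemma.
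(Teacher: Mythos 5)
Your proposal is a plan rather than a proof: the entire content of this lemma is the exhibition of a concrete gadget, and you never construct one. You describe two desired features --- a ``non-twin witness'' and a ``distance-three witness'' --- but the existence of a graph realising the second feature (i.e., one in which \emph{every} $k$-leaf root is forced to place some leaf at tree-distance exactly $3$ from $t$) is precisely the non-trivial claim; deferring it to ``the technical heart of the lemma'' leaves the lemma unproved. Moreover, the one concrete step you do sketch is incorrect. You claim that two non-adjacent $\T$-neighbours $u,w$ of $t$ must lie in distinct branches at $t$'s parent $p$, because sharing an edge past $p$ would force $d_T(u,w)\leq (d_T(u,t)-1)+(d_T(w,t)-1)-2\leq k$. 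If their paths to $t$ share the edge from $p$ onward, the correct bound is $d_T(u,w)\leq d_T(u,t)+d_T(w,t)-4\leq 2k-4$, which exceeds $k$ for every $k\geq 5$; so non-adjacent neighbours of $t$ can perfectly well share a branch at $p$, and the branch-disjointness on which your case analysis rests is not available.

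For comparison, the paper's proof is short and concrete: it takes $\T=P^{k-2}$ where $P$ is the path on the $2k-3$ vertices $v_1,\dots,v_{2k-3}$, and sets $t=v_{k-2}$. Property~(2) is witnessed by the caterpillar obtained by attaching a pendant leaf to each vertex of $P$ (two leaves hung on spine positions $i$ and $j$ are at distance $|i-j|+2$, which is at most $k$ exactly when $|i-j|\leq k-2$, i.e., exactly for the edges of $P^{k-2}$). The forcing step is not obtained by a branch analysis at $t$'s parent at all: the bound $d_T(v_{k-3},v_{k-2})\leq 3$ in every $k$-leaf root is imported from Lemma~2 of~\cite{WAGNER20095505} on leaf roots of path powers, and the matching lower bound $m_T(t)\geq 3$ is your twin observation (which is fine, since no two vertices of $\T$ have equal neighbourhoods). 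To repair your proposal you would need to commit to a specific gadget and actually prove the forcing statement for it; as written, the proposal supplies neither.
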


\begin{restatable}{lemma}{bot}\label{lem:bottom}
For all $k\geq 4$, there exists a gadget graph $\B$ that contains a vertex $b \in V(\B)$ such that:
\begin{enumerate}
        \item For any $k$-leaf root $T$ of $\B$, $m_T(b) \leq k-1$.
        \item There exists a $k$-leaf root $T_{\B}$ such that $m_{T_{\B}}(b) = k-1$
\end{enumerate}
\end{restatable}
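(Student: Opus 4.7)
The plan is to take $\B$ to be the diamond $K_4 - e$, namely the graph on four vertices $\{b, u_1, u_2, u_3\}$ with all edges present except $u_2 u_3$, and to let $b$ be one of the two universal vertices. This graph is strongly chordal, and its key feature is that the open neighborhood of $b$ in $\B$ contains the non-edge $u_2 u_3$ together with the extra universal vertex $u_1$.

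For the second condition I would construct $T_\B$ explicitly. Take a spine path $u_2 = x_0 - x_1 - \dots - x_k - x_{k+1} = u_3$ of length $k+1$, which already forces $d_{T_\B}(u_2, u_3) > k$. Attach $b$ to the spine vertex $x_{h+1}$ through an internal hair of length $h := \lceil (k-2)/2 \rceil$, and attach $u_1$ as a pendant leaf of $x_{k-1}$. A direct computation then yields $d_{T_\B}(b, u_2), d_{T_\B}(b, u_3), d_{T_\B}(b, u_1) \in \{k-1, k\}$ with minimum exactly $k-1$, and $d_{T_\B}(u_1, u_2), d_{T_\B}(u_1, u_3) \leq k$, as required.

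For the first condition I would argue by contradiction: suppose a $k$-leaf root $T$ of $\B$ has $m_T(b) = k$. Then every leaf of $T$ other than $b$ sits at distance at least $k$ from $b$, so each of the three $\B$-neighbors $u_1, u_2, u_3$ of $b$ must lie at distance exactly $k$. Root $T$ at the unique $T$-neighbor $b_1$ of $b$; then $u_1, u_2, u_3$ are leaves at depth $k-1$ in the rooted tree, and for two such leaves whose lowest common ancestor has depth $d_{ij}$ the identity $d_T(u_i, u_j) = 2(k - 1 - d_{ij})$ applies. Translating the non-edge $u_2 u_3$ and the edges $u_1 u_2$, $u_1 u_3$ of $K_4 - e$ yields $d_{23} < (k-2)/2$ and $d_{12}, d_{13} \geq (k-2)/2$, so $d_{23}$ is strictly smaller than each of $d_{12}$ and $d_{13}$.

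This contradicts the basic fact that for any three leaves of a rooted tree the three pairwise LCA depths are either all equal, or consist of two equal values and a strictly larger third (the larger depth being that of the pair sharing a subtree below the triple's common ancestor). The all-equal case forces $d_{23} = d_{12}$; the case where $u_2, u_3$ share a subtree forces $d_{23} > d_{12}$; and the two remaining cases force $d_{23}$ to equal $d_{13}$ or $d_{12}$. Each alternative is incompatible with $d_{23} < d_{12}, d_{13}$, so $m_T(b) = k$ is impossible. The hard part is the choice of gadget: the smaller candidate $P_3 = u_2 - b - u_3$ does not work, since a $k$-leaf root can place $u_2, u_3$ into two deep branches of $b_1$ to reach $m_T(b) = k$; it is the universal vertex $u_1$ of $K_4 - e$ that generates the incompatible triangle structure forcing $m_T(b) \leq k-1$.
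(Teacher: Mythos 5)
Your proposal is correct and follows essentially the same route as the paper: the gadget is the same diamond $K_4-e$ with $b$ taken as a universal vertex, part 1 is a tree-metric obstruction (your rooted LCA-depth argument, resting on the fact that the minimum of the three pairwise LCA depths is attained at least twice, is a reformulation of the paper's four-point-condition corollary, which instead pins down that the \emph{other} universal vertex must lie at distance at most $k-1$ from $b$), and part 2 is an explicit leaf root (yours a single subdivided caterpillar covering both parities, the paper's a spider construction split by parity). All of your distance computations check out.
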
   
\begin{restatable}{lemma}{ind}\label{lem:induction}
For all $k\geq 5$, there exists a gadget graph $I$ that contains two distinct vertices $t_I,b_I\in V(I)$ such that:
\begin{enumerate}
        \item For all $k$-leaf roots $T$ of $I$, $m_T(t_I)\geq k\Longrightarrow m_T(b_I)=3$.
        \item There exists a $k$-leaf root $T_I$ of $I$ such that $m_{T_I}(t_I)=k$ and $m_{T_I}(b_I)=3$.
        \item There exists a $k$-leaf root $R_I$ of $I$ such that $m_{R_I}(t_I)=k-1$ and $m_{R_I}(b_I)=4$.
    \end{enumerate}    
\end{restatable}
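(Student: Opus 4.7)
The plan is to construct $I$ explicitly as a relatively small graph (with size depending on $k$) containing two designated vertices $t_I$ and $b_I$ joined by a backbone whose length is tuned to $k$. The gadget will consist of two ``constraining'' local neighborhoods, one around $t_I$ and one around $b_I$, linked by a path-like backbone that propagates distance constraints from one end to the other in any $k$-leaf root. The neighborhoods will be designed so that when $t_I$ is pushed to the tip of a long branch of $T$ (which is precisely what $m_T(t_I) \geq k$ means), the rest of the tree is forced into a specific shape that places the nearest leaf to $b_I$ at distance exactly $3$.

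For property 2, I would exhibit $T_I$ explicitly: attach $t_I$ at the tip of a pendant branch of length roughly $k/2$ so that its closest other leaf sits at distance exactly $k$, and attach $b_I$ as a pendant one edge away from an internal node that hosts another leaf two edges further along, giving $m_{T_I}(b_I) = 3$. For property 3, I would take $R_I$ to be a suitably ``shifted'' version: modify the same skeleton so that $t_I$ now ends at distance $k-1$ from its nearest leaf while $b_I$ gets attached one edge deeper, yielding $m_{R_I}(b_I) = 4$. In each case the verification reduces to checking that the pairwise tree distances among the leaves of $T_I$ (respectively $R_I$) reproduce exactly the edge set of $I$.

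The heart of the argument, and the main obstacle, is property 1. Assuming $T$ is a $k$-leaf root of $I$ with $m_T(t_I) \geq k$, I expect the proof to proceed by a rigidity argument in two stages. First, every neighbor of $t_I$ in $I$ must sit at distance exactly $k$ from $t_I$ in $T$, and the adjacencies that these neighbors have among themselves inside the gadget force them to cluster around a single internal node $p$ of $T$; this is the step where the constraining local structure around $t_I$ is designed to leave no alternative. Second, this node $p$ becomes a pivot from which the backbone of $I$ must extend along essentially a unique path toward the region containing $b_I$. Propagating the induced constraints down the backbone pins down the position of the parent of $b_I$ in $T$, and the adjacencies required of $b_I$ inside $I$ then force its immediate tree neighborhood to contain a second leaf at distance exactly $3$.

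The technical difficulty lies in the propagation step: at each point along the backbone there may appear to be several plausible embeddings of the next vertex into $T$, and one must show that every deviation from the intended configuration either destroys an edge of $I$ or creates an unwanted one. I expect this to require a careful but finite case analysis exploiting the strongly chordal structure together with the specific adjacency pattern designed into $I$, and it is the portion of the proof where most of the work will live.
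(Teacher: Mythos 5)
There is a genuine gap: your proposal never actually constructs the gadget $I$, and the entire content of the lemma is the existence of a specific graph whose adjacency pattern forces the three properties simultaneously. You describe the intended \emph{effect} of the construction (``constraining local neighborhoods,'' a ``backbone that propagates distance constraints'') but give no vertex set, no edge set, and no argument; you explicitly defer the propagation step, which you correctly identify as where ``most of the work will live.'' The tension you need to resolve --- the gadget must be rigid enough that $m_T(t_I)\geq k$ forces $m_T(b_I)=3$ in \emph{every} $k$-leaf root, yet flexible enough to admit a second root with $m(t_I)=k-1$ and $m(b_I)=4$ --- is exactly the design problem, and it is not addressed. As written, the proposal is a plan rather than a proof.

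For comparison, the paper's mechanism is also rather different from the one you sketch. Its gadget (odd $k$, $q=(k-1)/2$) consists of a clique $X=\{x_1,\dots,x_q\}$ adjacent to both $t$ and $b$, together with vertices $y_2,\dots,y_q$ where $y_i$ is adjacent to $x_j$ exactly when $j\geq i$, plus the edge $(b,y_q)$; the even case adds a diamond on $\{z_1,z_2,b,x_i\}$. The rigidity does not come from the neighbors of $t$ ``clustering around a single internal node'': on the contrary, the proof shows their attachment points must be spread out. Concretely, $m_T(t)=k$ forces $d(t,x_i)=k$ for all $i$, hence (by a three-leaf parity lemma) all $d(x_i,x_j)$ are even and at most $k-1$; the asymmetric $y_i$-adjacencies, fed through the four-point condition, force $d(x_i,x_{i+1})=\dots=d(x_i,x_q)=\lambda_i$ with $\lambda_1>\dots>\lambda_{q-1}$, and since there are only $q-1$ admissible even values this pins $d(x_i,x_j)=k+1-2i$ exactly. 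Further four-point applications then give $d(t,b)=k+1$, $d(x_{q-1},b)=5$, and finally $d(x_q,b)=3$ by parity. This counting-plus-parity argument on the clique's pairwise distances is the idea your sketch is missing, and without it (or a worked-out substitute) the proof does not go through.
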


We will prove the existence of gadget graphs
$\T, \B$ and $I$ required to verify the three lemmas in Section~\ref{sec:gadgets}.
For the rest of the section, we will assume these lemmas and use them to prove our main result.

\begin{figure}[h!]
    \centering
    \includegraphics[height=10cm]{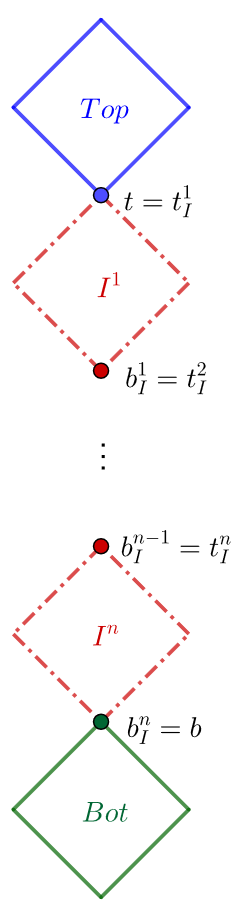}
    \caption{The construction of $H_n$}\label{fig:Hn}
\end{figure}

First, as alluded to above, we then combine our three gadgets to create an intermediary graph $H_n$. In particular, $H_n$ is the graph obtained by connecting in series one copy of $\T$,
then $n$ copies of $I$: $I^1, \dots, I^n$ and finally one copy of $\B$.  
This construction is illustrated in \Cref{fig:Hn}.
The vertices $t_I$ and $b_I$, mentioned in Lemma~\ref{lem:induction}, of the $j$-th copy $I^j$ are denoted $t^j_I$ and $b^j_I$, respectively.
Notice that to connect the gadgets within $H_n$, we identify the vertices described in Lemmas~\ref{lem:top}, \ref{lem:bottom}, and \ref{lem:induction} as follows. We identify $t$ with $t^1_I$, for all $j < n$, $b^j_I$ with $t^{j+1}_I$, and finally, $b^n_I$ with $b$. As a special case when $n = 0$, the graph $H_0$ is obtained by taking $\T$ and $\B$ and identifying $t$ with $b$.

In order to prove Theorem~\ref{thm:main} we must study the structure of $H_n$. We denote by $H_n - \T$ (resp. $H_n - \B$) the graph obtained from $H_n$ by deleting the top gadget $\T$ (resp. the bottom gadget $\B$), i.e. removing all vertices of $\T$ (resp. $\B$) except for the common vertex $t = t_I^1$ (resp. $b = b_I^n$). Of importance is the next lemma.
\begin{lemma}\label{lem:Hn}
    The graph $H_n$ has the following properties:
    \begin{enumerate}
        \item $\dist_{H_n}(b,t) \geq n$. \label{cond:increasing}
        \item $H_n$ is strongly chordal. \label{cond:Str-Chordal}
        \item $H_n - \T$ and $H_n - \B$ are both $k$-leaf powers. \label{cond:Top/Bot}
        \item $H_n$ is not a $k$-leaf power. \label{cond:not-LP}
        \end{enumerate}
\end{lemma}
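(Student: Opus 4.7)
The four claimed properties will be handled in turn; the first three follow fairly directly from the construction and the existence parts of the gadget lemmas, while the fourth is the technical heart. For Property~1, $H_n$ is a sequence of blocks $\T, I^1, \ldots, I^n, \B$ glued at cut vertices $t = t_I^1,\ b_I^1 = t_I^2,\ \ldots,\ b_I^n = b$, so any $t$-$b$ path must cross each $I^j$ from $t_I^j$ to $b_I^j$, contributing at least $1$ since these vertices are distinct; summing over $j$ gives $\dist_{H_n}(t,b) \ge n$. For Property~2, each of $\T, I, \B$ admits a $k$-leaf root by Lemmas~\ref{lem:top}--\ref{lem:induction} and is therefore strongly chordal; gluing strongly chordal graphs at a single cut vertex preserves strong chordality, since any induced cycle (or sun) of the glued graph would have to revisit the unique shared vertex and therefore lies entirely within one block.

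For Property~3, I construct explicit $k$-leaf roots by concatenating the leaf roots supplied by the lemmas. For $H_n - \B$, use $T_{\T}$ together with $n$ copies of $T_I$ (where $m(t_I)=k$, $m(b_I)=3$); for $H_n - \T$, use $n$ copies of $R_I$ (where $m(t_I)=k-1$, $m(b_I)=4$) together with $T_{\B}$ (where $m(b)=k-1$). Two adjacent trees sharing a leaf $v$ are merged by identifying the parents of $v$, leaving $v$ as a single leaf of the combined tree. Within each gadget, pairwise leaf-distances (and hence adjacencies) are unchanged; across each junction, the minimum cross-gadget distance between leaves equals $m_{T_1}(v) + m_{T_2}(v) - 2 \ge (k+3) - 2 = k+1 > k$ with the chosen leaf roots, so no spurious cross-gadget edges arise.

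Property~4 is the crux. Suppose for contradiction that $T$ is a $k$-leaf root of $H_n$, and for any gadget $G$ let $T|_G$ denote the minimal subtree of $T$ spanning $V(G)$, which is a $k$-leaf root of $G$ with the same pairwise leaf distances. The engine of the argument is a Steiner-point identity: for any three distinct leaves $v_1, v_2, v_3$ of $T$, their spanning subtree meets at an internal vertex $c$ with $c \neq v_i$ for each $i$ (since the $v_i$ are leaves of $T$), whence
\[
    d_T(v_2,v_3) \;=\; d_T(v_1,v_2) + d_T(v_1,v_3) - 2\,d_T(v_1, c) \;\le\; d_T(v_1,v_2) + d_T(v_1,v_3) - 2.
\]
When $v_2, v_3$ lie in distinct gadgets of $H_n$ whose only common vertex is $v_1$, they are non-adjacent in $H_n$, forcing $d_T(v_2,v_3) \ge k+1$ and hence $d_T(v_1,v_2) + d_T(v_1,v_3) \ge k+3$.

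Applying this with $v_1 = t$, $v_2$ a leaf of $\T\setminus\{t\}$ at distance $3$ from $t$ (which exists by Lemma~\ref{lem:top}), and $v_3$ ranging over $V(I^1)\setminus\{t\}$, I obtain $m_{T|_{I^1}}(t_I^1) \ge k$; Lemma~\ref{lem:induction}(1) then forces $m_{T|_{I^1}}(b_I^1) = 3$. Iterating---taking $v_1 = b_I^j = t_I^{j+1}$, $v_2$ a leaf of $I^j$ at tree-distance $3$ from it, and $v_3 \in V(I^{j+1})\setminus\{t_I^{j+1}\}$---propagates $m_{T|_{I^{j+1}}}(b_I^{j+1}) = 3$ for every $j$. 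One final application with $v_1 = b_I^n = b$ yields $m_{T|_\B}(b) \ge k$, contradicting $m_{T|_\B}(b) \le k-1$ from Lemma~\ref{lem:bottom}(1). The main obstacle is precisely this Steiner-inequality plus inductive propagation; everything else is routine bookkeeping on the block-sum construction.
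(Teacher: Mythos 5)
Your proof is correct and follows essentially the same route as the paper: the same gluing of $T_{\T},T_I,R_I,T_{\B}$ with the $m_{T_1}(v)+m_{T_2}(v)-2\ge k+1$ check for Property~3, and the same "distance-$3$ witness forces $m(t_I)\ge k$, then Lemma~\ref{lem:induction}(1) propagates a new distance-$3$ witness" chain for Property~4, ending in the same contradiction with Lemma~\ref{lem:bottom}(1). The only cosmetic difference is that you run the propagation as a forward scan inside one hypothetical leaf root of $H_n$ while the paper phrases it as an induction on $n$ over $H_n-\B$; the content is identical.
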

\begin{proof}
In order to prove \ref{cond:increasing}, the distance between $t_I$ and $b_I$ in $I$ is at least $1$ because the two vertices are distinct. Hence $\dist_{H_n}(b,t) \geq n$, because there are $n$ copies of the interior gadget $I$.

For the proof of \ref{cond:Str-Chordal}, the three gadgets are $k$-leaf powers and, therefore, are strongly chordal. The construction of $H_n$ does not introduce additional cycles; thus, $H_n$ remains strongly chordal.

To prove \ref{cond:Top/Bot}, we combine the leaf-root properties provided by the gadgets $T_{\T}$, $T_{\B}$, $T_I$, and $R_I$, as illustrated in Figure~\ref{fig:Trees}.
\begin{figure}[h!]
    \centering
    \includegraphics[height=12cm]{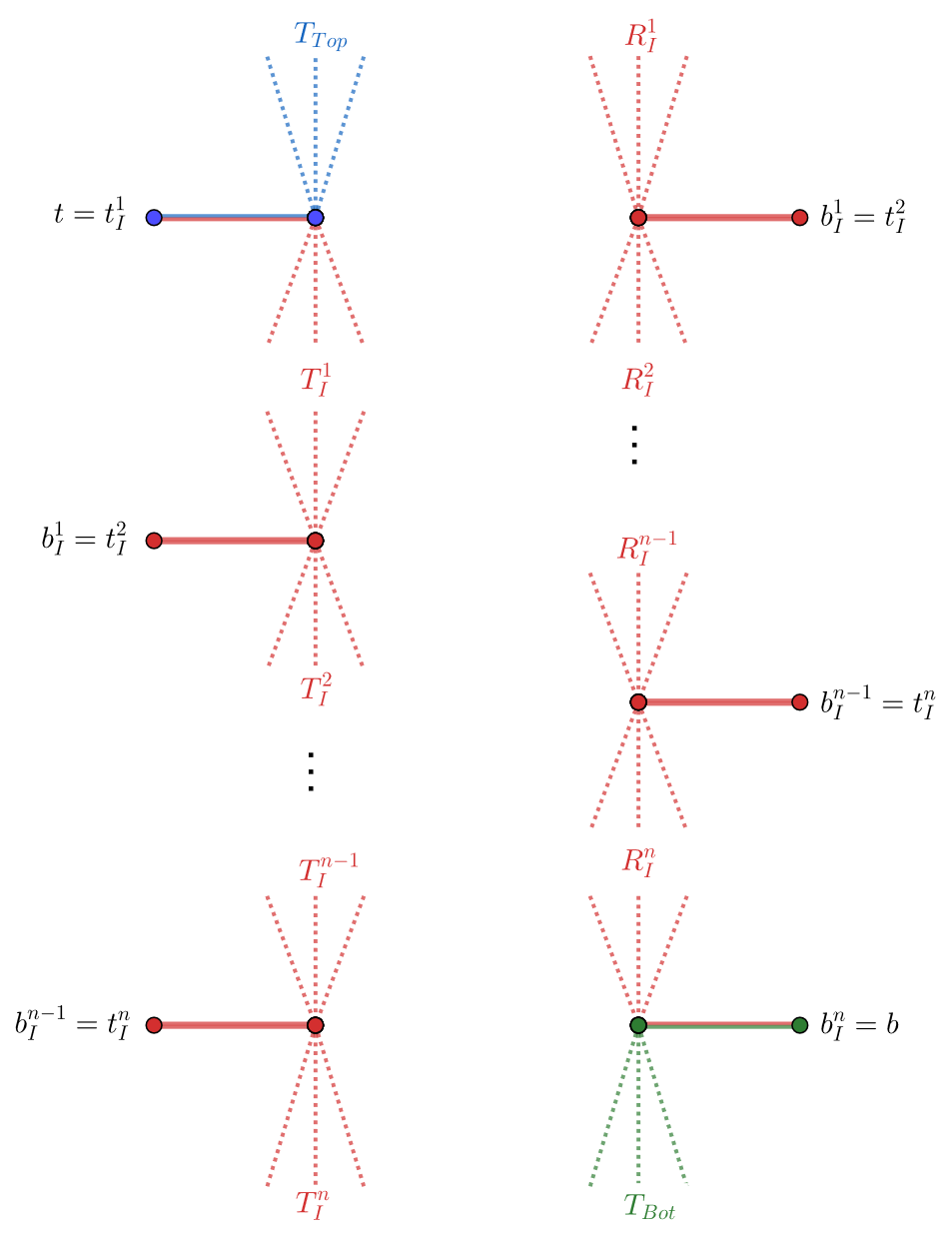}
    \caption{The $k$-leaf roots of  $H_n - \B$ and $H_n - \T$}\label{fig:Trees}
\end{figure}

The left tree in Figure~\ref{fig:Trees} is obtained by merging $T_{\T}$ with $n$ copies of $T_I$, denoted as $T_I^1, \dots, T_I^n$. We identify $t$ with $t_I^1$, and we identify the
of $t$ in $T_{\T}$ with $t_I^1$ and with the parent of $t_I^1$ in $T_I^1$. Similarly, for all $j \leq n-1$, we identify $b_I^j$ and its parent with $t_I^{j+1}$ and its parent, respectively. We now prove that the resulting tree is a $k$-leaf root of $H_n - \T$.
$\T$ and each copy of the interior gadget $I$ are the $k$-leaf power of the corresponding subtree: $T_{\T}$ for $\T$ and $T_I^j$ for the $j$-th copy of $I$. It remains to show that we do not introduce any additional, unwanted edges. If $v_1$ is a leaf of $T_{\T}$ different from $t$, and $v_2$ is a leaf of $T_I^1$ different from $t^1_I$ then, using Lemma~\ref{lem:top}, we conclude $d(v_1,t) \geq m_{T_{\T}}(t) = 3$. Furthermore, using the second point of Lemma~\ref{lem:induction}, we conclude $d(v_2,t_I^1) \geq m_{T_I}(t_I) = k$. Therefore, $d(v_1,v_2) \geq d(v_1,t) + d(v_2,t_I^1)-2 \geq k+3-2 = k+1 > k$, and there is no edge between $v_1$ and $v_2$ in the $k$-th power of the tree. Similarly, if $v_1$ is a leaf of $T_I^j$ different from $b_I^j$ for some $j \leq n-1$ and $v_2$ is a leaf of $T_I^{j+1}$ different from $t_I^{j+1}$, we have $d(v_1,b_I^j) \geq 3$ and $d(v_2, t_I^{j+1}) \geq k$. Thus, $d(v_1,v_2) \geq d(v_1,b_I^j) + d(v_2, t_I^{j+1}) -2 \geq  k+3-2 = k+1 > k$, and there is no edge between $v_1$ and $v_2$ in the $k$-th power of the tree.

The right tree in Figure~\ref{fig:Trees} is formed by merging $n$ copies of $R_I$, denoted $R^1_I, \dots, R^n_I$, with $T_{\B}$. For all $j \leq n-1$, we identify $b_I^j$ with $t_I^{j+1}$,
and we identify the parent of $b_I^j$ with the parent of $t_I^{j+1}$. Finally, we identify $b_I^n$ and its parent in $R^n_I$ with $b$ and its parent in $T_{\B}$, respectively. Similar to the left tree, we must prove that no additional, unwanted edges are created. If $v_1$ is a leaf of $R^j_I$ different from $b_I^j$ for some $j \leq n-1$ and $v_2$ is a leaf of $R^{j+1}_I$ different from $t_I^{j+1}$ then, using Lemma~\ref{lem:induction}, we conclude $d(v_1,b_I^j) \geq m_{R_I}(b_I) = 4$ and $d(v_2, t_I^{j+1}) \geq m_{R_I}(t_I) = k-1$. Thus, $d(v_1,v_2) \geq d(v_1,b_I^j) + d(v_2, t_I^{j+1}) -2 \geq k-1 + 4-2 = k+1 > k$, and there is no edge between $v_1$ and $v_2$ in the $k$-th power of the tree. Similarly if $v_1$ is a leaf of $R^n_I$ different from $b_I^n$ and $v_2$ is a leaf of $T_{\B}$ different from $b$ then we have $d(v_1,b_I^n) \geq 4$ and, using Lemma~\ref{lem:bottom},  $d(v_2,b)  \geq k-1$. Therefore $d(v_1,v_2) \geq d(v_1,b_I^n) + d(v_2,b) -2 \geq  k+1>k$ and $v_1$,$v_2$ are not connected in the $k$-th power of the tree.
This completes the proof of \ref{cond:Top/Bot}, the third point of the lemma.

It remains to prove \ref{cond:not-LP}, the final point of the lemma. We start by proving by induction that for any integer $n$, in any $k$-leaf root $T$ of $H_n - \B$, there is a leaf at distance $3$ of $b$ in $T$. When $n = 0$, there are no gadgets $I$ between $\B$ and $\T$, so $b = t$ and the property holds by property 1 of Lemma~\ref{lem:top}. Turning to the induction step, assume that the property holds for some $n \geq 0$ and consider a $k$-leaf root $T$ of $H_{n+1}- \B$. Since $H_n - \B$ is an induced subgraph of $H_{n+1} - \B$, some induced subgraph of $T$ is a $k$-leaf root of $H_n- \B$. By the induction hypothesis, there exists a vertex $v_1$ in $H_n- \B$ at a distance exactly $3$ from $b_I^n = t_I^{n+1}$ in an induced subgraph of $T$. Adding vertices will not alter the distance, so $d(v_1,b_I^n) = 3$ in $T$.  We claim that every vertex of $I^{n+1}$ is at distance at least $k$ from $b_I^n = t_I^{n+1}$ in $T$ (except $b_I^n$ itself). Assume, by contradiction, that there exists a vertex $v_2$  in the last copy $I^{n+1}$, distinct from $b_I^n$ such that that $d(v_2, b_I^n) \leq k-1$ . This assumption would imply that $d(v_1, v_2) \leq d(v_1, b_I^n) + d(v_2, b_I^n) - 2 \leq 3 + (k-1) - 2 = k$, meaning that $v_1$ and $v_2$ are connected in the $k$-th power of $T$, contradicting the fact that $T$ is a $k$-leaf root of $H_{n+1}-\B$. Therefore, all vertices in $I^{n+1}$, distinct from $b_I^n$, are at a distance of at least $k$ from $b_I^n$ in $T$. We can now apply property 1 of Lemma~\ref{lem:induction}, which concludes the induction.

Now assume by contradiction that there exists a $k$-leaf root $T$ of $H_n$. $T$ induces a $k$-leaf root of $H_n - \B$. In particular, there exists a vertex $v_1$ in $H_n- \B$ such that $d(b_I^n, v_1) = 3$. Moreover, property 1 of Lemma~\ref{lem:bottom} implies that there exists a vertex $v_2$ in $\B$, distinct from $b$, such that $d(v_2, b) \leq k-1$. Combining these equations, we get $d(v_1, v_2) \leq k-1 + 3 - 2 = k$, contradicting the fact that there is no edge between $v_1$ and $v_2$. This proves that $H_n$ is not a $k$-leaf power, as desired.
\end{proof}

As stated $H_n$ is an intermediate graph in proving the main result. 
We will actually show the existence
of an induced subgraph $G_{k,n}$ of $H_n$
that is strongly chordal and minimal non $k$-leaf power.
More precisely, we have the following lemma.

\begin{lemma}\label{lem:Gn}
    For all $k\geq 5$ and $n\geq 0$, there exists a graph $G_{k,n}$ such that:
\begin{enumerate}
    \item $G_{k,n}$ is strongly chordal and contains at least $n$ vertices.
    \item $G_{k,n}$ is not a $k$-leaf power.
    \item If $G\neq G_{k,n}$ is an induced subgraph of $G_{k,n}$ then $G$ is a $k$-leaf power.
\end{enumerate}
\end{lemma}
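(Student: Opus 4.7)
The plan is to take $G_{k,n}$ to be any induced subgraph of $H_n$ that fails to be a $k$-leaf power and is minimal under vertex deletion, meaning $G_{k,n}-v$ is a $k$-leaf power for every $v\in V(G_{k,n})$. Such a subgraph exists because $H_n$ is finite and, by Lemma~\ref{lem:Hn}(\ref{cond:not-LP}), is itself not a $k$-leaf power; one simply starts at $H_n$ and greedily deletes single vertices as long as the result remains non-$k$-leaf-power.

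Property~2 holds by construction, and strong chordality in property~1 is inherited from $H_n$ via Lemma~\ref{lem:Hn}(\ref{cond:Str-Chordal}), since strong chordality passes to induced subgraphs. For property~3 I would first note the standard fact that the class of $k$-leaf powers is closed under induced subgraphs: given a $k$-leaf root $T$ of a graph $G$ and a vertex $v\in V(G)$, deleting the leaf $v$ from $T$ and iteratively suppressing any resulting degree-one internal vertices produces a $k$-leaf root of $G-v$ without altering distances between surviving original leaves. Combined with minimality, every proper induced subgraph of $G_{k,n}$ is contained in $G_{k,n}-v$ for some $v$, which is a $k$-leaf power, and is therefore itself a $k$-leaf power.

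The heart of the argument is the size bound, for which I would show that $G_{k,n}$ must contain every one of the $n+1$ identified ``spine'' vertices $t=t_I^1,\,t_I^2,\,\ldots,\,t_I^n,\,b_I^n=b$ (using the identifications $b_I^j=t_I^{j+1}$). Suppose for contradiction that such a spine vertex $v=t_I^j$ is absent from $G_{k,n}$; then $G_{k,n}\subseteq H_n-v$, so it suffices to prove that $H_n-v$ is a $k$-leaf power. The key observation is that $v$ is a cut vertex of $H_n$: deleting it splits $H_n-v$ into a vertex-disjoint top part and bottom part. The top part is an induced subgraph of $H_{j-1}-\B$ and the bottom part is an induced subgraph of $H_{n-j+1}-\T$ (with the boundary cases $j=1$ and $j=n+1$ reducing to induced subgraphs of $\T$ or $\B$ directly). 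Both ambient graphs are $k$-leaf powers by Lemma~\ref{lem:Hn}(\ref{cond:Top/Bot}) (and by Lemmas~\ref{lem:top}, \ref{lem:bottom} in the boundary cases), so both parts are $k$-leaf powers by closure under induced subgraphs.

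To conclude, I would use that a disjoint union of two $k$-leaf powers is itself a $k$-leaf power: join their leaf roots via an auxiliary internal path long enough to force every cross-distance strictly above $k$. This yields a $k$-leaf root of $H_n-v$, contradicting that $G_{k,n}\subseteq H_n-v$ is not a $k$-leaf power. Hence all $n+1$ spine vertices belong to $V(G_{k,n})$, giving $|V(G_{k,n})|\geq n+1\geq n$. The main piece requiring care is the bookkeeping to verify that each side of the cut $H_n-v$ genuinely embeds as an induced subgraph of the appropriate smaller $H_{j-1}-\B$ or $H_{n-j+1}-\T$, together with the handling of the boundary cases $j=1$ and $j=n+1$; once this is in place, the rest follows directly from Lemma~\ref{lem:Hn} and the two elementary closure properties invoked above.
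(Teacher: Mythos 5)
Your proposal is correct, and the overall skeleton (take a minimal non-$k$-leaf-power induced subgraph of $H_n$; inherit strong chordality; get property~3 from minimality plus closure of $\L(k)$ under induced subgraphs) matches the paper exactly. Where you genuinely diverge is the size bound, which is the only substantive step. The paper argues more globally: since $H_n-\T$ and $H_n-\B$ are $k$-leaf powers, $G_{k,n}$ must meet both $\T$ and $\B$; minimality plus closure of $\L(k)$ under disjoint unions forces $G_{k,n}$ to be connected, so it contains a path between those two vertices, and property~\ref{cond:increasing} of Lemma~\ref{lem:Hn} ($\dist_{H_n}(b,t)\geq n$) gives $|V(G_{k,n})|\geq n$. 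You instead show that each of the $n+1$ spine vertices $t_I^1,\dots,t_I^n,b_I^n$ must lie in $G_{k,n}$, via the observation that each is a cut vertex of $H_n$ whose removal splits $H_n$ into two components that embed as induced subgraphs of $H_{j-1}-\B$ and $H_{n-j+1}-\T$; both pieces are $k$-leaf powers, so their disjoint union $H_n-v$ is too, contradicting $G_{k,n}\subseteq H_n-v$. Your route costs some bookkeeping on the cut decomposition but buys a few things: it does not need property~\ref{cond:increasing} of Lemma~\ref{lem:Hn} at all, it avoids the connectivity-of-$G_{k,n}$ argument, and it pins down exactly which vertices of $H_n$ survive into the minimal obstruction (all spine vertices), yielding the marginally stronger bound $|V(G_{k,n})|\geq n+1$. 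Both routes rest on the same two closure facts (induced subgraphs and disjoint unions), which the paper also invokes, so neither is more elementary; the paper's version is simply shorter because Lemma~\ref{lem:Hn} was engineered to hand it the distance bound.
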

\begin{proof}
Let $G_{k,n}$ be a minimal induced subgraph of $H_n$ that is not a $k$-leaf power. It is strongly chordal because, by Lemma~\ref{lem:Hn}, $H_n$ is strongly chordal. By definition, $G_{k,n}$ is not in $\mathcal{L}(k)$, but every induced subgraph $G$ of $G_{k,n}$ is in $\mathcal{L}(k)$ if $G\neq G_{k,n}$. It remains to prove that $|G_{k,n}| \geq n$. By Lemma~\ref{lem:Hn}, both $H_n - \B$ and $H_n - \T$ are in $\mathcal{L}(k)$. Therefore, $G_{k,n}$ must contain a vertex from $\T$ and a vertex from $\B$. Moreover, $G_{k,n}$ is connected by its minimality, since the disjoint union of $k$-leaf powers is a $k$-leaf power. Hence, $G_{k,n}$ must contain a path from $\T$ to $\B$. By Lemma~\ref{lem:Hn}, $\dist_{H_n}(b,t) \geq n$, and therefore $|G_{k,n}| \geq n$.
\end{proof}

Our main result follows directly from Lemma~\ref{lem:Gn}

\begin{proof}[Proof of \Cref{thm:main}]
If $\L(k)$ is the set of strongly chordal graphs which are $\F_k$-free, then $\F_k$ must contain $G_{k,n}$ for all $n$ because it is strongly chordal and a minimal non $k$-leaf power by Lemma~\ref{lem:Gn}. But the set $\{G_{k,n}, n \geq 0\}$ is infinite because $G_{k,n}$ has more than $n$ vertices. Therefore $\F_k$ must be infinite, which concludes the proof of \Cref{thm:main}.
\end{proof}

\section{The Gadget Graphs}\label{sec:gadgets}

So we have proven the main theorem modulo the three critical lemmas.
Recall to prove these lemmas we must construct the appropriate three gadget graphs, namely $\T$, $\B$ and $I$.
We present these constructions and give formal proofs of 
Lemmas~\ref{lem:top},~\ref{lem:bottom} and~\ref{lem:induction} in this section.

We start with a general observation. In a tree $T$, if a pair of leaves are a distance of~$2$ apart, they share the same parent. Consequently, their distances to every other leaf are identical. A consequence of this is that if two vertices are not connected by an edge, or if they have different neighborhoods in a graph, they must be at a distance of at least $3$ in any leaf root of that graph. In the gadgets we describe in this section, any two vertices connected by an edge always have distinct neighborhoods. Therefore, we assume that for any pair of vertices $x$ and $y$ and any leaf root $T$, we have $d_T(x, y) \geq 3$.

\subsection{The Top Gadget}\label{sec:top}

We begin by showing the existence of an appropriate top gadget, $\T$.

\top*
\begin{proof}
Let $P$ be the path on the $2k-3$ vertices $v_1, \dots, v_{2k-3}$. The top gadget $\T$ is defined as $P^{k-2}$ with $t = v_{k-2}$.
Two examples of $\T$ are shown in Figure~\ref{fig:top-gadget}, for the cases $k\in \{5,6\}$.
First, note that $\T$ is a $k$-leaf power. In particular, a $k$-leaf root of $\T$ is the caterpillar $T_{\T}$, which is constructed by attaching a leaf to every vertex of $P$. For the first property, we use Lemma $2$ from \cite{WAGNER20095505} which implies that in any $k$-leaf root $T$ of $\T$, we have $d(v_{k-3}, v_{k-2}) \leq 3$.
\end{proof}

\begin{figure}[!h]
    \centering
    \begin{subfigure}{0.45\textwidth}
        \centering
        \begin{tikzpicture}
            \node (P1) at (225:3) {$v_1$};
            \node (P2) at (180:3) {$v_2$};
            \node (P3) at (135:3) {$\hspace{-4mm}t=v_3$};
            \node (P4) at (90:3) {$v_4$};
            \node (P5) at (45:3) {$v_5$};
            \node (P6) at (0:3) {$v_6$};
            \node (P7) at (315:3) {$v_7$};
            \draw (P1) -- (P2) -- (P3) -- (P4) -- (P5) -- (P6) -- (P7);
            \draw (P1) -- (P3) -- (P5) -- (P7);
            \draw (P2) -- (P4) -- (P6);
            \draw (P1) -- (P4) -- (P7);
            \draw (P2) -- (P5);
            \draw (P3) -- (P6);
        \end{tikzpicture}
    \end{subfigure}
    \quad
    \begin{subfigure}{0.45\textwidth}
        \centering
        \begin{tikzpicture}
            \node (P1) at (250:3) {$v_1$};
            \node (P2) at (210:3) {$v_2$};
            \node (P3) at (170:3) {$v_3$};
            \node (P4) at (130:3) {$\hspace{-4mm}t=v_4$};
            \node (P5) at (90:3) {$v_5$};
            \node (P6) at (50:3) {$v_6$};
            \node (P7) at (10:3) {$v_7$};
            \node (P8) at (330:3) {$v_8$};
            \node (P9) at (290:3) {$v_9$};
            \draw (P1) -- (P2) -- (P3) -- (P4) -- (P5) -- (P6) -- (P7) -- (P8) -- (P9);
            \draw (P1) -- (P3) -- (P5) -- (P7) -- (P9);
            \draw (P2) -- (P4) -- (P6) -- (P8);
            \draw (P1) -- (P4) -- (P7);
            \draw (P2) -- (P5) -- (P8);
            \draw (P3) -- (P6) -- (P9);
            \draw (P1) -- (P5);
            \draw (P2) -- (P6);
            \draw (P3) -- (P7);
            \draw (P4) -- (P8);
            \draw (P5) -- (P9);
        \end{tikzpicture}
    \end{subfigure}
    \caption{The Top Gadget for $k=5$ and $k=6$.}
    \label{fig:top-gadget}
\end{figure}
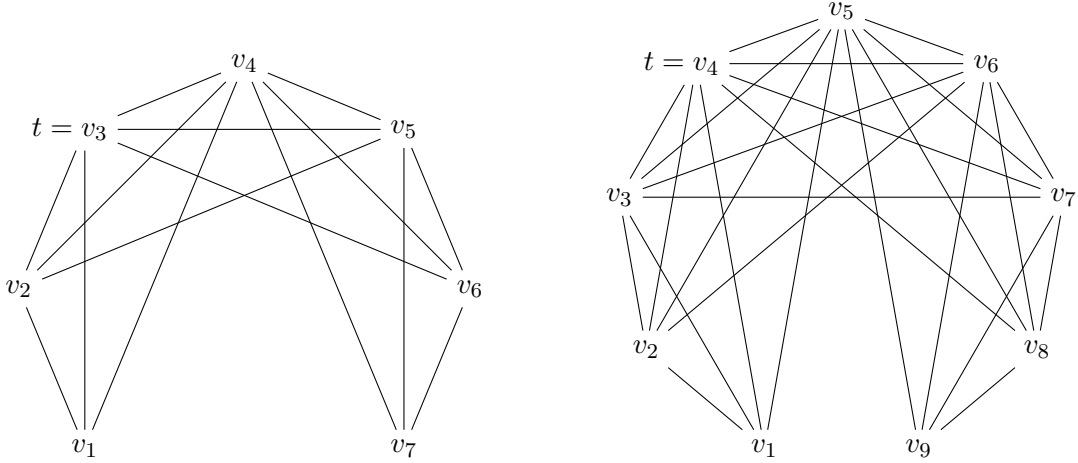

\subsection{The Bottom Gadget}\label{sec:bot}

Next, we construct the bottom gadget, $\B$. 
A key technical tool we require is the {\em 4-Point Condition}. 
This is the following classical characterization of tree metrics. 
\begin{theorem}[4-Point Condition]\label{thm:4-PC} \cite{BUNEMAN197448}
    Let $d$ be a distance on a finite set $V$, then there exists a tree $T$ whose leaves are $V$ such that $\forall u,v\in V$ $d_T(u,v)=d(u,v)$ if and only if the following condition is true for all $(u,v,w,t)\in V$:
    \[d(u,v)+d(w,t)\leq \max\set{d(u,w)+d(v,t),d(v,w)+d(u,t)}.\]
\end{theorem}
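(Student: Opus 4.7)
The plan is to prove the two implications separately, handling necessity by a direct inspection of the tree topology on four leaves and sufficiency by induction on $|V|$.

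For necessity, I would examine the minimal subtree $T_0$ of $T$ spanned by any four leaves $u,v,w,t$. Up to relabeling, $T_0$ has at most two internal branching points joined by an edge of some length $\epsilon \ge 0$, which induces a split of the four leaves, say $\{u,v\} \mid \{w,t\}$. Denoting the four external branch lengths by $\alpha,\beta,\gamma,\delta$, a routine computation yields $d(u,v)+d(w,t)=\alpha+\beta+\gamma+\delta$, whereas $d(u,w)+d(v,t)=d(v,w)+d(u,t)=\alpha+\beta+\gamma+\delta+2\epsilon$. Thus two of the three pairwise-sum values coincide and dominate the third (all three coincide in the degenerate star case $\epsilon=0$), giving the desired inequality as a four-point equality on the larger two sums.

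For sufficiency, I would induct on $n=|V|$. The cases $n\le 3$ are immediate: for $n=3$ the star whose three edges have lengths given by the Gromov products $(u\cdot v)_w:=\tfrac{1}{2}(d(u,w)+d(v,w)-d(u,v))$ realizes $d$. For $n\ge 4$, the plan is to locate a \emph{cherry} pair $u,v\in V$, contract it to a single representative $c$, apply the induction hypothesis to a reduced metric $d'$ on $(V\setminus\{u,v\})\cup\{c\}$ defined by $d'(c,w):=\tfrac{1}{2}(d(u,w)+d(v,w)-d(u,v))$, and finally reattach $u$ and $v$ as two pendant edges at $c$ of lengths $\tfrac{1}{2}(d(u,v)+d(u,w)-d(v,w))$ and $\tfrac{1}{2}(d(u,v)+d(v,w)-d(u,w))$ for any witness $w$. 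To find the cherry, I would fix a reference vertex $x\in V$ and take $(u,v)$ to maximize $(u\cdot v)_x$ over pairs distinct from $x$; the 4-point condition then forces every third leaf $z$ to satisfy $(u\cdot z)_x=(v\cdot z)_x\le (u\cdot v)_x$, which is precisely the cherry property. Checking that $d'$ again satisfies the 4-point condition reduces to a direct algebraic substitution, since every quadruple in the reduced set involving $c$ corresponds to a quadruple in $V$ after replacing $c$ by either $u$ or $v$.

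The main obstacle is the cherry step: proving that the Gromov-product-maximizing pair truly admits a cherry realization and that the induced distance $d'$ inherits the 4-point condition. The verification ultimately comes down to a careful case analysis of which of the three quartet sums is smallest among the tuples $(u,v,w,z)$ as $w,z$ range over the remaining vertices, and this is where the geometric content of the theorem is concentrated.
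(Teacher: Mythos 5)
The paper does not actually prove this statement---it is imported as a classical theorem of Buneman~\cite{BUNEMAN197448} and used as a black box---so there is no internal proof to compare against; what you have written is the standard textbook proof, and its key steps check out. For necessity, your quartet computation is right: the two pairings that cross the internal edge both sum to $\alpha+\beta+\gamma+\delta+2\epsilon$ and dominate the third. For sufficiency, the cherry step does go through: writing $S_1=d(u,v)+d(x,z)$, $S_2=d(u,z)+d(v,x)$, $S_3=d(v,z)+d(u,x)$, the maximality of $(u\cdot v)_x$ is exactly the statement that $S_2\ge S_1$ and $S_3\ge S_1$, and the four-point condition then forces $S_2=S_3$, i.e.\ $(u\cdot z)_x=(v\cdot z)_x$; hence $d(u,\cdot)-d(v,\cdot)$ is constant on the remaining leaves, the pendant lengths are independent of the witness $w$, and the reattached tree reproduces $d(u,z)$ and $d(v,z)$ for every $z$. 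The inheritance of the four-point condition by $d'$ is indeed a one-line substitution once you note that $d'(c,\cdot)=d(u,\cdot)-\alpha$ is a translate of $d(u,\cdot)$, so each quartet sum involving $c$ equals the corresponding sum for $u$ shifted by $-\alpha$; the only things worth stating explicitly are the nonnegativity of $\alpha$, $\beta$ and of $d'$ (all from the triangle inequality) and the degenerate cases where some of these lengths vanish. One caveat unrelated to your argument: as the paper phrases the theorem, $d_T$ is the unit-edge graph distance, whereas Buneman's theorem (and your proof) produces a weighted tree; realizing $d$ with unit edges additionally requires integrality and the parity constraint of Lemma~\ref{lem:even}. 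The paper only ever invokes the necessity direction, so this does not affect its use, but it is the one gap between the statement as printed and the result you have proved.
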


Our bottom gadget $\B$ will simply be a {\em diamond}, the complete graph on 4 vertices minus one edge.
Consequently, we begin by proving the following corollary of the $4$-Point Condition when applied to a diamond.

\begin{corollary}\label{cor:diamond}
    In any $k$-leaf root $T$ of a diamond with vertex set $\set{b,v_1,v_2,v_3}$ where $(v_1,v_3)\notin E$, $d(b,v_2)\neq k$.
\end{corollary}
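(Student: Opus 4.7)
The plan is to apply the 4-Point Condition directly to the four vertices $b, v_1, v_2, v_3$ of the diamond, with the clever pairing that puts the unique non-edge $v_1 v_3$ against the pair $b v_2$. Suppose for contradiction that some $k$-leaf root $T$ of the diamond satisfies $d(b,v_2) = k$. Since $v_1 v_3 \notin E$, we have $d(v_1, v_3) > k$, and because edge lengths in $T$ are integers, this forces $d(v_1, v_3) \geq k+1$.

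Now I would apply Theorem~\ref{thm:4-PC} to the quadruple $(v_1, v_3, b, v_2)$, which yields
\[
    d(v_1,v_3) + d(b,v_2) \;\leq\; \max\bigl\{\, d(v_1,b) + d(v_3,v_2),\; d(v_1,v_2) + d(v_3,b) \,\bigr\}.
\]
The point of this pairing is that every pair appearing on the right-hand side corresponds to an edge of the diamond, while the left-hand side pairs up the non-edge $v_1v_3$ with the hypothetical tight distance $d(b,v_2) = k$.

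The left-hand side is then at least $(k+1) + k = 2k+1$. On the right-hand side, each of $d(v_1,b), d(v_3,v_2), d(v_1,v_2), d(v_3,b)$ is at most $k$ because the corresponding pair of leaves is adjacent in the diamond, so both terms inside the $\max$ are bounded by $2k$. This gives $2k+1 \leq 2k$, the desired contradiction, proving $d(b,v_2) \neq k$.

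I do not foresee any real obstacle here: the entire argument is a one-shot application of the 4-Point Condition, and the only insight needed is to pair the unique non-edge against the pair whose distance we want to rule out. The integrality of tree distances (used to upgrade $d(v_1,v_3) > k$ into $d(v_1,v_3) \geq k+1$) is essential — without that gap of $1$, the inequality from the 4-Point Condition would only be met with equality rather than contradicted.
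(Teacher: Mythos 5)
Your proof is correct and is essentially the same argument as the paper's: both apply the 4-Point Condition to the quadruple pairing the non-edge $v_1v_3$ with $bv_2$ and bound the right-hand side by $2k$ using the four edges of the diamond. One small remark: the appeal to integrality is not actually needed, since $d(v_1,v_3) > k$ and $d(b,v_2) = k$ already give a strict inequality $d(v_1,v_3) + d(b,v_2) > 2k$ against a right-hand side that is at most $2k$.
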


\begin{proof}
    Assume for contradiction that $d(b, v_2) = k$.
    Then since $d(v_1,v_3)>k$, we get $d(v_1,v_3)+d(b,v_2)>2k$. 
    
    On the other hand, since we have a leaf root of the diamond, we must have:
    \[\max\set{d(v_1,v_2),d(v_2,v_3),d(v_3,b),d(b,v_1)}\leq k.\]
    
    This implies that $d(v_1,v_2)+d(v_3,b)\leq 2k$ and $d(v_2,v_3)+d(b,v_1)\leq 2k$. 

    So, $d(v_1,v_3)+d(b,v_2)>\max\set{d(v_1,v_2)+d(v_3,b),d(v_2,v_3)+d(b,v_1)}$ which contradicts Theorem~\ref{thm:4-PC}
\end{proof}

Corollary~\ref{cor:diamond} allows us to prove our critical lemma for the bottom gadget.

\bot*

\begin{proof}
Let the graph $\B$ be the diamond with vertex set $\{b, v_1, v_2, v_3\}$ and non-edge $(v_1, v_3)$. By Corollary~\ref{cor:diamond}, $d(b,v_2)\leq k-1$ in any leaf root. Thus the first property holds.  For the second property, there are two cases, illustrated in Figure~\ref{fig:diam-leaf-root}, depending upon the parity of $k$. 

    \begin{figure}[h!]
        \centering
        \begin{subfigure}{0.45\textwidth}
            \centering
            \begin{tikzpicture}
                \node (b) at (-3,0) {$b$};
                \node (v1) at (0,3) {$v_1$};
                \node (v2) at (3,0) {$v_2$};
                \node (v3) at (0,-3) {$v_3$};
                \node[label=above:{$\frac{k-1}{2}$}] (eb) at (-1.5,0) { };
                \node[label=right:{$\frac{k+1}{2}$}] (ev1) at (0,1.5) { };
                \node[label=above:{$\frac{k-1}{2}$}] (ev2) at (1.5,0) { };
                \node[label=right:{$\frac{k+1}{2}$}] (ev3) at (0,-1.5) { };
                \node (O) at (0,0) {O};
                \draw[ultra thick] (b) -- (O); 
                \draw[ultra thick] (v1) -- (O); 
                \draw[ultra thick] (v2) -- (O); 
                \draw[ultra thick] (v3) -- (O); 
                \draw[dotted] (b) -- (v1) -- (v2) -- (v3) -- (b);
                \draw[dotted]  (b) to [out=-15,in=-165] (v2);
            \end{tikzpicture}
            \subcaption{$T_{\B}$ for $k$ odd}
        \end{subfigure}
        \begin{subfigure}{0.45\textwidth}
            \centering
            \begin{tikzpicture}
                \node (b) at (-3,0) {$b$};
                \node (v1) at (-0.5,3) {$v_1$};
                \node (v2) at (3,0) {$v_2$};
                \node (v3) at (0.5,-3) {$v_3$};
                \node[label=above:{$\frac{k}{2}-1$}] (eb) at (-1.5,0) { };
                \node[label=right:{$\frac{k}{2}$}] (ev1) at (-0.5,1.5) { };
                \node[label=above:{$\frac{k}{2}-1$}] (ev2) at (1.5,0) { };
                \node[label=left:{$\frac{k}{2}$}] (ev3) at (0.5,-1.5) { };
                \node (O1) at (-0.5,0) {$O_1$};
                \node (O2) at (0.5,0) {$O_2$};
                \draw (O1) -- (O2);
                \draw[ultra thick] (b) -- (O1); 
                \draw[ultra thick] (v1) -- (O1); 
                \draw[ultra thick] (v2) -- (O2); 
                \draw[ultra thick] (v3) -- (O2); 
                \draw[dotted] (b) -- (v1) -- (v2) -- (v3) -- (b);
                \draw[dotted]  (b) to [out=-15,in=-165] (v2);
            \end{tikzpicture}
            \subcaption{$T_{\B}$ for $k$ even}
        \end{subfigure}
        \caption{The $k$-leaf roots of the diamond with $\min_{v\in V(D)\setminus \{b\}} d(b,v)=k-1$. Here the bold edges denote paths of the described length; the dotted edges are the edges of the diamond.}
        \label{fig:diam-leaf-root}
    \end{figure}
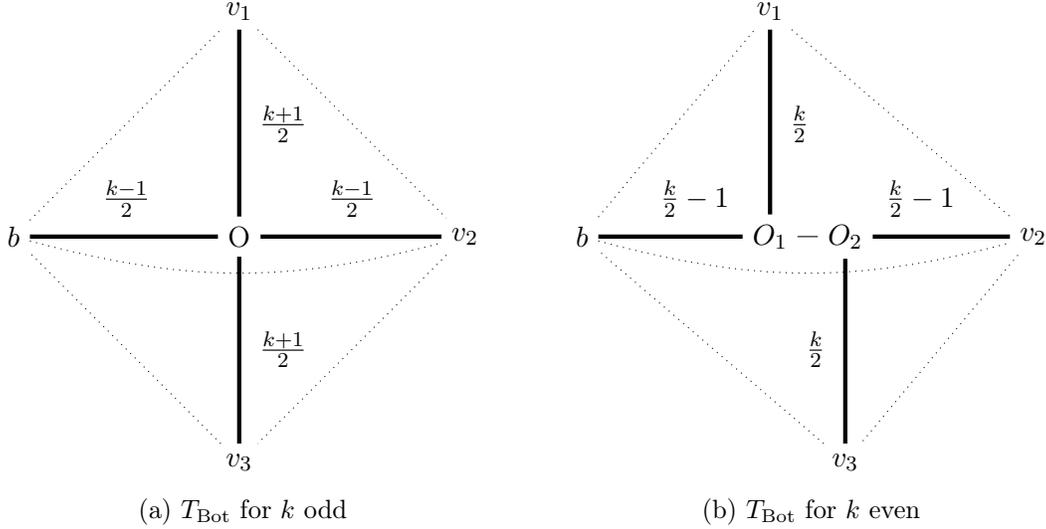
    
    \begin{itemize}
        \item If $k$ is odd, start with $b$ and $v_2$ at distance $k-1$. Let $O$ be the midpoint of the two at distance $\frac{k-1}{2}$ from both. Set $v_1$ and $v_3$ to be each at distance $\frac{k+1}{2}$ from $O$. Then $b$ and $v_2$ will both be at distance exactly $k$ from both $v_1$ and $v_3$, but $v_1$ and $v_3$ are at distance $k+1$ from each other. Thus, the only distance which is greater than $k$ is $d(v_1,v_3)$ and the closest vertex to $b$ is $v_2$, as desired. 
        \item If $k$ is even, start with $b$ and $v_2$ at distance $k-1$. Set $O_1$ to be the point at distance $\frac{k}{2}-1$ from $b$ and $O_2$ the point at distance $\frac{k}{2}-1$ from $v_2$. Add $v_1$ at distance $\frac{k}{2}$ from $O_1$ and $v_3$ at distance $\frac{k}{2}$ from $O_2$. Then $b$ is at distance $k-1$ from $v_1$ and $k$ from $v_3$, while $v_2$ is at distance $k$ from $v_1$ and $k-1$ from $v_3$. Note that $v_1$ and $v_3$ are at distance $k+1$ from each other. Thus, the only distance which is greater than $k$ is $d(v_1,v_3)$ and the closest vertex to $b$ is $v_2$, as desired. 
    \end{itemize}
This lemma follows.
\end{proof}

\subsection{The Interior Gadget}\label{sec:int}

Lastly, we have the most complex construction, that of 
the interior gadget, $I$. 
Now we require the following lemma which, again, is a consequence of the 4-Point Condition.

\begin{lemma}\label{lem:closer}
    If $d(t,x_1)\leq \min\set{d(t,x_2),d(t,x_3)}$ and $d(y,x_1)>\max\set{d(y,x_2),d(y,x_3)}$, then:
    \[d(t,x_1)+d(x_2,x_3)<d(t,x_2)+d(x_1,x_3)=d(t,x_3)+d(x_1,x_2)\]
\end{lemma}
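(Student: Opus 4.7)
The plan is to apply the 4-Point Condition (Theorem~\ref{thm:4-PC}) to three different quartets: $\set{t,y,x_1,x_2}$, $\set{t,y,x_1,x_3}$, and $\set{y,x_1,x_2,x_3}$. The first two applications will yield the equality in the lemma's conclusion, while the third will rule out the degenerate case and upgrade the inequality to strict.

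For the equality, I would consider the three pairing sums on $\set{t,y,x_1,x_2}$:
\[A = d(t,y)+d(x_1,x_2), \quad B = d(t,x_1)+d(y,x_2), \quad C = d(t,x_2)+d(y,x_1).\]
The hypotheses give $C - B = [d(t,x_2)-d(t,x_1)] + [d(y,x_1)-d(y,x_2)] > 0$ strictly, since the first bracket is nonnegative and the second is strictly positive. The 4-Point Condition forces the two largest of $\set{A,B,C}$ to coincide; since $C > B$, we must have $A = C$, i.e., $d(t,y)+d(x_1,x_2) = d(t,x_2)+d(y,x_1)$. The analogous argument on $\set{t,y,x_1,x_3}$ yields $d(t,y)+d(x_1,x_3) = d(t,x_3)+d(y,x_1)$. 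Subtracting these two identities immediately produces $d(t,x_2)+d(x_1,x_3) = d(t,x_3)+d(x_1,x_2)$, which is the equality of the lemma.

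For the strict inequality, I would argue by contradiction: suppose $d(t,x_1)+d(x_2,x_3)$ also equals this common value. Rearranging gives $d(x_2,x_3) - d(x_1,x_3) = d(t,x_2) - d(t,x_1) \geq 0$ and $d(x_2,x_3) - d(x_1,x_2) = d(t,x_3) - d(t,x_1) \geq 0$. Applying Theorem~\ref{thm:4-PC} to $\set{y,x_1,x_2,x_3}$ and combining these two inequalities with the strict hypotheses $d(y,x_1) > d(y,x_2)$ and $d(y,x_1) > d(y,x_3)$, one finds that $d(y,x_1)+d(x_2,x_3)$ is strictly larger than both $d(y,x_2)+d(x_1,x_3)$ and $d(y,x_3)+d(x_1,x_2)$. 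This contradicts the 4-Point Condition, which requires two of the three pairing sums to be equal and to attain the maximum.

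The main subtlety I expect is bookkeeping around the 4-Point Condition rather than any heavy computation: the condition only asserts that two of the three pairing sums coincide without telling us which, so the argument depends on using the strict inequality hypothesis on $y$ to single out which sum is strictly smallest in each quartet, and then forcing the identification among the remaining two.
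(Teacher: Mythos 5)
Your proof is correct, but it reaches the conclusion by a genuinely different route from the paper's. Both arguments hinge on the same decisive step: applying the 4-Point Condition to the quartet $\{y,x_1,x_2,x_3\}$, where the strict hypothesis on $y$ together with $d(x_2,x_3)\geq\max\{d(x_1,x_3),d(x_1,x_2)\}$ forces $d(y,x_1)+d(x_2,x_3)$ to be the unique maximum among the three pairing sums, a contradiction. The difference is in how the equality $d(t,x_2)+d(x_1,x_3)=d(t,x_3)+d(x_1,x_2)$ is obtained. You derive it first, from the two quartets $\{t,y,x_1,x_2\}$ and $\{t,y,x_1,x_3\}$, cancelling the auxiliary distance $d(t,y)$ --- a quantity the paper never considers. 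The paper instead establishes the strict inequality first (assuming $d(t,x_1)+d(x_2,x_3)\geq\max\{d(t,x_2)+d(x_1,x_3),\,d(t,x_3)+d(x_1,x_2)\}$ for contradiction) and then extracts the equality from the single quartet $\{t,x_1,x_2,x_3\}$. Your route is arguably slicker for the equality; the paper's is more economical in that it only ever manipulates distances that appear in the statement.

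One small point to tighten: to prove the strict inequality you suppose $d(t,x_1)+d(x_2,x_3)$ \emph{equals} the common value, but the negation of the desired conclusion is that it is $\geq$ that value. This is easily repaired: either observe first that $d(t,x_1)+d(x_2,x_3)\leq\max\{d(t,x_2)+d(x_1,x_3),\,d(t,x_3)+d(x_1,x_2)\}$ is itself an instance of the 4-Point Condition on $\{t,x_1,x_2,x_3\}$, so only the equality case remains to be excluded; or note that assuming $\geq$ in place of $=$ only strengthens the inequalities $d(x_2,x_3)\geq d(x_1,x_3)$ and $d(x_2,x_3)\geq d(x_1,x_2)$, so your contradiction on $\{y,x_1,x_2,x_3\}$ goes through unchanged. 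Either fix is one line.
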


\begin{proof}
    Assume that $d(t,x_1)+d(x_2,x_3)\geq \max\set{d(t,x_2)+d(x_1,x_3),d(t,x_3)+d(x_1,x_2)}$. Then, by using this assumption and our bound on $d(t,x_1)$, we get:
    \begin{align*}
        d(x_2,x_3)&\geq \max\set{d(x_1,x_3)+(d(t,x_2)-d(t,x_1)),d(x_1,x_2)+(d(t,x_3)-d(t,x_1))}\\
        &\geq \max\set{d(x_1,x_3),d(x_1,x_2)}
    \end{align*}

    Then, by combining this bound with our bound on $d(y,x_1)$, we get:

    \[d(y,x_1)+d(x_2,x_3)>\max\set{d(y,x_2)+d(x_1,x_3),d(y,x_3)+d(x_1,x_2)}\]

    This contradicts Theorem~\ref{thm:4-PC}. This implies that the assumption is wrong, that is, we must have:

    \[d(t,x_1)+d(x_2,x_3)<\max\set{d(t,x_2)+d(x_1,x_3),d(t,x_3)+d(x_1,x_2)}.\]
    
    Now, assume without loss of generality that $d(t,x_2)+d(x_1,x_3)\geq d(t,x_3)+d(x_1,x_2)$. Then, by Theorem~\ref{thm:4-PC}, we must have $d(t,x_2)+d(x_1,x_3)\leq \max\set{d(t,x_1)+d(x_2,x_3),d(t,x_3)+d(x_1,x_2)}$. Since $d(t,x_2)+d(x_1,x_3)>d(t,x_1)+d(x_2,x_3)$, this implies that $d(t,x_2)+d(x_1,x_3)\leq d(t,x_3)+d(x_1,x_2)$. So we get that $d(t,x_2)+d(x_1,x_3)$ is bounded above and below by $d(t,x_3)+d(x_1,x_2)$ so they must be equal. 
    
    So we get:
    \[d(t,x_1)+d(x_2,x_3)<d(t,x_2)+d(x_1,x_3)=d(t,x_3)+d(x_1,x_2).\]
    This completes the proof.
\end{proof}

We will also use the following simple lemma:

\begin{lemma}\label{lem:even}
    For any 3 leaves $u,v,w$ of a tree, $d(u,v)+d(u,w)+d(v,w)$ is even.
\end{lemma}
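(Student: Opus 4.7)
The plan is to use one of two clean, standard observations about tree metrics. Both give the result immediately, and there is really no technical obstacle here; the main choice is which perspective to present.

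The approach I would take first is the Steiner point (median) argument. In a tree, for any three leaves $u,v,w$ there is a unique vertex $m$ (the median of $\{u,v,w\}$) that lies simultaneously on each of the three pairwise paths: the $u$–$v$ path, the $u$–$w$ path, and the $v$–$w$ path. Setting $a=d(u,m)$, $b=d(v,m)$, $c=d(w,m)$, each pairwise path decomposes as a concatenation at $m$, so
\[
d(u,v)=a+b,\qquad d(u,w)=a+c,\qquad d(v,w)=b+c.
\]
Adding these three equations gives $d(u,v)+d(u,w)+d(v,w)=2(a+b+c)$, which is manifestly even.

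The only step that needs justification is the existence of the median vertex $m$. I would justify it briefly by taking $m$ to be the (unique) vertex at which the path from $u$ to $v$ first meets the path from $u$ to $w$: since the tree has unique paths, once these two paths diverge they never meet again, so $m$ is well defined, and a symmetric argument shows $m$ also lies on the $v$–$w$ path. (Because edge lengths are $1$, $m$ is an actual vertex, not a midpoint of an edge.)

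If a reader prefers an even shorter argument, I would offer the bipartiteness alternative as a remark: any tree is bipartite, so fix a proper $2$-coloring; then $d(x,y)$ is even if and only if $x$ and $y$ have the same color. Among the three leaves $u,v,w$, either all share a color (all three distances even) or exactly two share a color (exactly two of the three distances are odd); in either case the sum of the three distances is even. Since the lemma is elementary, I expect no real obstacle, and the proof as written in the paper is a one-line instance of the median argument above.
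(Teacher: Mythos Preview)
Your median (Steiner point) argument is exactly the paper's proof: they take the unique vertex $O$ on all three pairwise paths and observe $d(u,v)+d(u,w)+d(v,w)=2\bigl(d(u,O)+d(v,O)+d(w,O)\bigr)$. The bipartiteness remark is a nice alternative, but the primary argument already matches the paper essentially verbatim.
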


\begin{proof}
    Since we have a tree, there is a unique vertex $O$ which is simultaneously in the path from $u$ to $v$, the path from $v$ to $w$ and the path from $u$ to $w$. 
    
   Hence $d(u,v)+d(u,w)+d(v,w)=2\cdot (d(u,O)+d(v,O)+d(w,O))$ which must be even. 
\end{proof}

We now have all the tools needed to prove our critical lemma for the interior gadget.

\ind*

Before proving this lemma, let's discuss the requirement that $k\ge 5$.
First observe that no such graph can exist for $k\leq 2$ because if $m_T(b_I)=3$ then $b_I$ is an isolated vertex in $I$. Thus its distance to other leaves does not matter as long as it's large enough, so the lemma could not hold.
Similarly, if $k=3$, the existence of a $3$-leaf root $T_I$ implies that $b_I$ is not an isolated vertex in $I$. But the existence of a $3$-leaf root $R_I$ implies that $b_I$ is an isolated vertex, a contradiction. 
Finally, for $k=4$, while there is no direct simple proof that the statement does not hold for any graph, 
the existence of a characterization of $4$-leaf powers implies that no such graph can exist.

\begin{proof}
    For convenience, we denote $t_I$ and $b_I$ by $t$ and $b$, respectively.
    To prove the lemma, we will explicitly construct $I$ for any value of $k\geq 5$. In order to do so, we consider two cases
    depending, again, upon the parity of $k$.

    For $k$ odd, set $q=\frac{k-1}{2}$. In particular, for $k\geq 5$ we must have $q\geq 2$. 
    
    We construct the graph $I$ using the following sets of vertices: 
    \begin{itemize}
        \item $t$ and $b$
        \item $X=\set{x_1,\dots, x_q}$
        \item $Y=\set{y_2,\dots, y_q}$
    \end{itemize}

    The edge set is defined as follows:
    \begin{itemize}
        \item For $i=1,\dots q$, $(t,x_i)$ and $(b,x_i)$ are edges. That is, $t$ and $b$ are adjacent to all vertices in $X$.
        \item For $i=1,\dots, q$, for $j=i+1,\dots q$, $(x_i,x_j)$ is an edge. That is, $X$ forms a clique. 
        \item For $i=2\dots q$, for $j=i\dots q$, $(y_i,x_j)$ is an edge. 
        \item $(b,y_q)$ is an edge.
    \end{itemize}
Equivalently, it will be helpful to define the set of edges using the neighborhood of each vertex:
    \begin{itemize}
        \item $t$ is adjacent to $X=\set{x_1,\dots,x_q}$.
        \item $b$ is adjacent to $X$ and to $y_q$.
        \item For $i=1,\dots, q$, $x_i$ is adjacent to $t$, to $b$, to $X\setminus \{x_i\}$ and to $y_j$ for $j=2,\dots, i$ (with $x_1$ having no neighbor in $Y$). 
        \item For $i=2,\dots, q$, for $j=i,\dots, q$, $y_i$ is adjacent to $x_j$. If $i=q$, then $y_q$ is also adjacent to $b$.
    \end{itemize}
    
    That is, we take $I=(V,E)$ to be defined by:
    \begin{equation}
        \begin{aligned}
            V=&\set{t,b}\cup\left(\bigcup_{i=1}^q \set{x_i}\right)\cup\left(\bigcup_{i=2}^{q} \set{y_i}\right)\\
            E=&\left(\bigcup_{i=1}^q \set{(t,x_i),(b,x_i)}\right)\cup \left(\bigcup_{1\leq i<j\leq q}\set{(x_i,x_j)}\right)\cup\left(\bigcup_{2\leq i\leq j\leq q}\set{(y_i,x_j)}\right)\cup\set{(b,y_q)}
        \end{aligned}
        \label{eq:odd-gadget}
    \end{equation}
This construction is illustrated in Figure~\ref{fig:interior-odd}.
We remark that this construction only makes sense for $k\geq 5$ because if $k=3$ or $k=1$ then $Y$ is not well defined.

        \begin{figure}[!ht]
        \centering
            \begin{tikzpicture}
                \node[circle] (T) at (0,4.5) {$t$};
                \node[circle] (B) at (0,-4.5) {$b$};
                \node[circle] (X1) at (-5,3) {$x_1$};
                \node[circle] (X2) at (-5,2) {$x_2$};
                \node[circle] (X3) at (-5,1) {$x_3$};
                \node[circle] (Xdots) at (-5,0.5) {$\vdots$};
                \node[circle] (XQ-2) at (-5,-1) {$x_{q-2}$};
                \node[circle] (XQ-1) at (-5,-2) {$x_{q-1}$};
                \node[circle] (XQ) at (-5,-3) {$x_q$};
                
                \node[above right] (XQ-1-label) at (-4.5,-2) {};
                \node[above right] (XQ-2-label) at (-4,-1){};
                \node[above right] (X3-label) at (-3.5,1) {};
                \node[above right] (X2-label) at (-3,2) {};
                \node[above right,draw] (X-label) at (-4,3.5) {$X$ (Clique)};
                
                \node [align=left,fit=(XQ-1)(XQ)(XQ-1-label), draw] (XQ-1-XQ){};
                \node [align=left,fit=(XQ-2)(XQ-2-label)(XQ-1-XQ), draw] (XQ-2-XQ) {};
                \node [align=left,fit=(X3)(XQ-2-XQ)(X3-label), draw] (X3-XQ){};
                \node [align=left,fit=(X2)(X3-XQ)(X2-label), draw] (X2-XQ){};

                \node[circle] (Y2) at (4,2) {$y_2$};
                \node[circle] (Y3) at (4,1) {$y_3$};
                \node[circle] (Ydots) at (4,0) {$\vdots$};
                \node[circle] (YQ-2) at (4,-1) {$y_{q-2}$};
                \node[circle] (YQ-1) at (4,-2) {$y_{q-1}$};
                \node[circle] (YQ) at (4,-3) {$y_q$};
                \node [fit=(X1)(X2-XQ)(X-label), draw] (X1-XQ) {};
                \draw[ultra thick] (X1-XQ.east) -- (T.south);
                \draw[ultra thick] (X1-XQ.east) -- (B.north);
                \draw[ultra thick] (X2-XQ.east) -- (Y2.west);
                \draw[ultra thick] (X3-XQ.east) -- (Y3.west);
                \draw[ultra thick] (XQ-2-XQ.east) -- (YQ-2.west);
                \draw[ultra thick] (XQ-1-XQ.east) -- (YQ-1.west);
                \draw (B.north) -- (YQ.west) -- (XQ);
            \end{tikzpicture}
        \caption{The interior gadget $I$ for odd $k$. The bold edges signify all possible connections are made. }\label{fig:interior-odd}
    \end{figure}
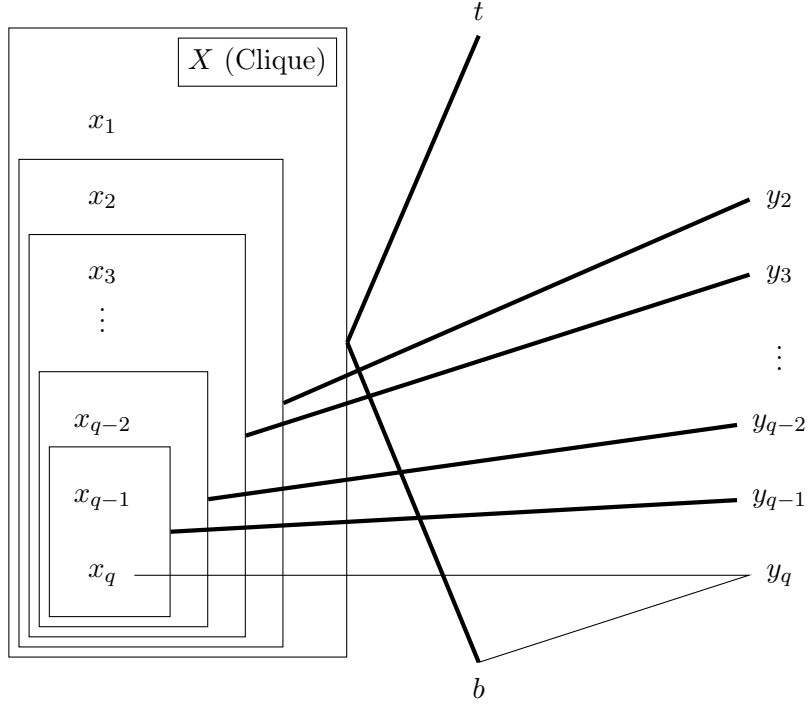

    We will prove that this graph satisfies the lemma by proving three claims.

    \begin{claim}\label{cl:induction-odd-Top-1}
        For $I$ as defined in~(\ref{eq:odd-gadget}):
        
        For all $k$-leaf roots $T$ of $I$, $m_T(t)=k\Longrightarrow m_T(b)=3$.
    \end{claim}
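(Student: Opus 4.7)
The plan is to pin $T$ down so tightly that $d_T(b, x_q) = 3$ is forced directly. Fix a $k$-leaf root $T$ of $I$ with $m_T(t) = k$, and set $\delta = d_T(t,b)$. Since $t$'s only neighbors in $I$ are $x_1,\dots,x_q$ and $m_T(t) = k$, every $x_i$ satisfies $d(t, x_i) = k$ while every other leaf $u$ satisfies $d(t, u) \geq k+1$; in particular $\delta \geq k+1$.

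The first step is to determine every inter-$x$ distance. Fix $l \in \{2, \dots, q\}$ and indices $i < l \leq j < j' \leq q$. Since $y_l \not\sim x_i$ while $y_l \sim x_j$ and $y_l \sim x_{j'}$, we have $d(y_l, x_i) \geq k+1 > k \geq d(y_l, x_j), d(y_l, x_{j'})$; combined with $d(t, x_i) = d(t, x_j) = d(t, x_{j'}) = k$, Lemma~\ref{lem:closer} applied with $(t, y_l, x_i, x_j, x_{j'})$ yields both $d(x_i, x_j) = d(x_i, x_{j'})$ and $d(x_j, x_{j'}) < d(x_i, x_j)$. Setting $\alpha_i := d(x_i, x_j)$ for any $j > i$ therefore makes $\alpha_i$ well-defined and forces $\alpha_1 > \alpha_2 > \cdots > \alpha_{q-1}$. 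Lemma~\ref{lem:even} applied to $(t, x_i, x_j)$ gives that each $\alpha_i$ is even, and the bounds $\alpha_i \geq 4$ (leaf distances are at least $3$, combined with parity) and $\alpha_1 \leq k - 1$ (adjacency in $I$, plus $k$ odd) confine the $q - 1$ strictly decreasing values to the $(q-1)$-element set $\{4, 6, \dots, k-1\}$; a pigeonhole then forces $\alpha_i = k + 1 - 2i$.

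The second step transfers this to distances from $b$. I apply Lemma~\ref{lem:closer} once more, this time with $(t, y_q, x_i, x_q, b)$ for each $i < q$; the hypotheses hold because $y_q \sim x_q$, $y_q \sim b$, but $y_q \not\sim x_i$, while $d(t, x_i) = k \leq \min\{d(t, x_q), d(t, b)\}$. The equality in the conclusion gives $k + d(x_i, b) = \delta + d(x_i, x_q) = \delta + \alpha_i$, i.e.\ $d(x_i, b) = \delta + 1 - 2i$, while the strict inequality gives $d(x_q, b) < d(x_i, b)$. Taking $i = 1$ and using $d(x_1, b) \leq k$ (by adjacency) forces $\delta \leq k + 1$, hence $\delta = k + 1$ exactly.

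Finally, taking $i = q - 1$ yields $d(x_q, b) < d(x_{q-1}, b) = \delta - 2q + 3 = 5$, and Lemma~\ref{lem:even} applied to $(t, b, x_q)$ gives that $d(x_q, b) + k + \delta = d(x_q, b) + 2k + 1$ is even, so $d(x_q, b)$ is odd. The only odd integer in $[3, 5)$ is $3$, so $d(x_q, b) = 3$, which forces $m_T(b) \leq 3$ and hence $m_T(b) = 3$. The main obstacle I anticipate is executing the first step cleanly: matching the $q - 1$ strictly decreasing even values $\alpha_i$ against the $q - 1$ available slots $\{4, 6, \dots, k-1\}$ is what pins every $\alpha_i$ exactly, and everything else then reduces to two focused applications of Lemma~\ref{lem:closer} together with a parity check.
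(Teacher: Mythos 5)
Your proof is correct and follows essentially the same route as the paper's: both arguments apply Lemma~\ref{lem:closer} with the vertices $y_l$ to show the inter-$X$ distances are even, strictly decreasing in the first index, and hence pinned to $k+1-2i$ by counting the available even values, and then apply Lemma~\ref{lem:closer} again with $y_q$ (at $i=1$ to force $d(t,b)=k+1$, and at $i=q-1$ to force $d(x_{q-1},b)=5$ and $d(x_q,b)<5$), finishing with the parity argument from Lemma~\ref{lem:even}. The only cosmetic difference is that you apply the parity lemma to the triple $(t,b,x_q)$ rather than $(x_{q-1},x_q,b)$, which is immaterial.
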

    \begin{proof}
        Assume that $m_T(t)=k$, then $\forall i\in [q]$, $d(t,x_i)=k$. Then for all distinct $i, j \in [q]$, by Lemma~\ref{lem:even} with $t$, $x_i$ and $x_j$, $d(x_i,x_j)$ must be even (and, thus, at most $k-1$).

        Assume $i<j<\ell$. Then $d(t,x_i)=k=\min\set{d(t,x_j),d(t,x_\ell)}$. Furthermore, $d(y_j,x_i)>k\geq \max\set{d(y_j,x_j),d(y_j,x_\ell)}$ because $y_j$ is adjacent to $x_j$ and $x_\ell$ but not $x_i$ (nor $t$). So, by Lemma~\ref{lem:closer}, we get:
        \begin{align*}
            &d(t,x_i)+d(x_j,x_\ell)<d(t,x_\ell)+d(x_i,x_j)=d(t,x_j)+d(x_i,x_\ell)
        \end{align*} 
But because $d(t,x_i)=d(t,x_j)=d(t,x_\ell)=k$. This is true if and only if
   \begin{align*}
       d(x_j,x_\ell)<d(x_i,x_j)=d(x_i,x_\ell)
        \end{align*} 

        This implies that for every $i\in[q-1]$, there exists some integer $\lambda_i$ such that $d(x_i,x_{i+1})=d(x_i,x_{i+2})=\dots=d(x_i,x_q)=\lambda_i$. Moreover, as shown, the $\lambda_i$ must be even. Thus $k-1\geq \lambda_1> \dots > \lambda_{q-1}>2$. By definition, $2q=k-1$. In particular, there are only $q-1$ even numbers greater that $2$ and at most $k-1$. Therefore, $\lambda_i=k+1-2i$.
Specifically, we have shown that $d(x_i,x_j)=k+1-2i$, for $1\leq i<j\leq q$.
        
        Recall $d(t,x_i)=k=\min\set{d(t,x_q),d(t,b)}$ and $d(y_q,x_i)>k\geq \max\set{d(y_q,x_q),d(y_q,b)}$, for $i<q$. So, by Lemma~\ref{lem:closer}, we obtain:   
        \[d(x_q,b)+d(x_i,t)<d(x_q,x_i)+d(t,b)=d(x_q,t)+d(x_i,b).\] 

        Consider $i=1$. Recall $k\geq 5$ and $q\geq 2$. So $q\neq 1$ implying that $x_q\neq x_1$. It follows that
        \begin{align*}
            &\ d(x_1,x_q)+d(t,b)=d(x_q,t)+d(x_1,b)\\
            \Longrightarrow &\ k-1+d(t,b)=k+d(x_1,b)\\
            \Longrightarrow &\ d(t,b)=1+d(x_1,b)
        \end{align*}
        However, we must have $d(t,b)>k$ and $d(x_1,b)\leq k$. So it must be that $d(t,b)=k+1$ and $d(x_1,b)=k$.

        Next consider $i=q-1$. Because $q\geq 2$, we have $i\geq 1$ and so $x_{q-1}$ exists. Then $d(x_{q-1}, x_q) = k + 1 - 2(q - 1) = k + 3 - (k - 1) = 4$. Therefore, because $d(x_i, t)=k$ for all $1\le i\le q$, we have
        \begin{align*}
            &\ d(x_{q-1},x_{q})+d(t,b)=d(x_q,t)+d(x_{q-1},b)\\
            \Longrightarrow &\  4+k+1=k+d(x_{q-1},b)\\
            \Longrightarrow &\ d(x_{q-1},b)=5
        \end{align*}

        Finally, recall that $d(x_q,b)+d(x_{q-1},t) < d(x_q,x_{q-1})+d(t,b)$. This implies that $d(x_q,b)+ k<4+(k+1)$. 
        In particular,  $d(x_q,b) < 5$.
        Moreover, by Lemma~\ref{lem:even}, we know $d(x_q,b)+d(x_q,x_{q-1})+d(x_{q-1},b)$ is even. But $d(x_q,x_{q-1})$ is even and $d(x_{q-1},b)$ is odd.
        Hence $d(x_q,b)$ must be odd. In particular, it must be odd {\bf and} less than 5. Hence $d(x_q,b)=3$, which is what we wanted to show.
    \end{proof}

    \begin{claim}\label{cl:induction-odd-Top-2}
        For $I$ as defined in~(\ref{eq:odd-gadget}):
        
        There exists a $k$-leaf root $T_I$ of $I$ such that $m_{T_I}(t)=k$ and $m_{T_I}(b)=3$.
    \end{claim}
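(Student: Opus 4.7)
The plan is to construct $T_I$ explicitly and verify the edge/non-edge conditions by direct distance computation. The tree consists of a spine of length $k+1$ between $t$ and $b$, with internal spine vertices $c_1, \ldots, c_k$ (where $c_m$ is at distance $m$ from $t$); every remaining vertex of $I$ is realized as the endpoint of a pendant path hanging off one of the $c_m$'s.

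The attachment rule is: for each $i \in \{1, \ldots, q\}$, attach $x_i$ via a pendant path of length $q + 1 - i$ rooted at $c_{q + i}$; for each $i \in \{2, \ldots, q-1\}$, attach $y_i$ via a pendant path of length $q + i$ rooted at $c_{q+i}$; and attach $y_q$ via a pendant path of length $k - 2$ rooted at $c_{q+q} = c_{k-1}$. Any two leaves $u, v$ with respective feet $c_a, c_{a'}$ and pendant lengths $\lambda, \lambda'$ then lie at tree distance $d_{T_I}(u, v) = \lambda + |a - a'| + \lambda'$.

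Plugging in the attachment rule gives $d(t, x_i) = k$, $d(b, x_i) = k + 2 - 2i$ (so in particular $d(b, x_q) = 3$), and $d(x_i, x_j) = k + 1 - 2i$ for $i < j$, consistent with the distances forced in Claim~\ref{cl:induction-odd-Top-1}. Analogous closed forms for the $y$--$x$, $y$--$y$, $y$--$t$, and $y$--$b$ distances are equally routine. A short case analysis on the adjacency pattern of $I$ then confirms that $d_{T_I}(u, v) \leq k$ precisely on the edges of $I$, so $T_I$ is a $k$-leaf root. The minima $m_{T_I}(t) = k$ and $m_{T_I}(b) = 3$ are witnessed by $d(t, x_i) = k$ for every $i$ and by $d(b, x_q) = 3$ respectively, since all other distances from $t$ or $b$ exceed these values.

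The one real subtlety, and the main obstacle, lies in choosing $y_q$'s pendant length. The naive pattern $\lambda = q + i$ would give $\lambda = k - 1$ for $i = q$, forcing $d(y_q, b) = k + 1$ and breaking the edge $(b, y_q)$. Shortening to $\lambda_{y_q} = k - 2$ yields $d(y_q, b) = k$, $d(y_q, x_q) = k - 1$, and $d(y_q, x_j) = 2k - 2j - 2$ for $j < q$; the last exceeds $k$ precisely because $j \leq q - 1 = (k-3)/2$ gives $2j \leq k - 3$. This is where the hypothesis $k \geq 5$ (equivalently $q \geq 2$) becomes essential.
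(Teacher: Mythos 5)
Your construction is correct and is essentially the paper's own: a spine of length $k+1$ from $t$ to $b$ with the same pendant lengths ($q+1-i$ for $x_i$, $q+i$ for $y_i$ with $2\le i\le q-1$, and $k-2$ for $y_q$), differing only in that you root $y_q$ at the spine vertex two steps from $b$ rather than one, which still gives $d(y_q,b)=k$, $d(y_q,x_q)=k-1$, and $d(y_q,x_j)>k$ for $j<q$, so all adjacencies come out right. The key subtlety you flag — that $y_q$'s pendant must be shortened from the naive $q+i$ pattern to preserve the edge $(b,y_q)$ — is exactly the point the paper's construction handles the same way.
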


    \begin{proof}
        We will prove this by explicitly constructing a leaf root. Recall $k$ is odd and $k=2q+1$.     
        \begin{enumerate}
            \item Take a path of length $k+1=2q+2$ from $t$ to $b$. 
            
            \item Label the vertices along the path from $t$ to $b$ which are at distance $q+i$ of $t$ as $O_i$  for $i=1,\dots, q+1$.

            \item Add a path of length $q-i+1$ from $O_i$ to $x_i$ for $i=1,\dots,q$.

            \item Add a path of length $k-2$ from $O_{q+1}$ to $y_q$.

            \item If $k\geq 7$, add a path of length $q+i$ from $O_i$ to $y_i$ for $i=2,\dots, q-1$. (For $k=5$ we have $q=2$, so these $y_i$ do not exist.)

        \end{enumerate} 
        \begin{figure}[h!]
        \centering
        \begin{tikzpicture}
            \node[circle,draw] (T) at (-6.5,0) {$t$};
            
            \node[circle,draw] (O1) at (-4.5,0) {$O_1$};
            \node[circle,draw] (O2) at (-3,0) {$O_2$};
            \node[circle,draw] (O3) at (-1.5,0) {$O_3$};
            
            \node[circle,draw] (X1) at (-4.5,2) {$x_{1}$};
            \node[circle,draw] (X2) at (-3,2) {$x_2$};
            \node[circle,draw] (X3) at (-1.5,2) {$x_3$};
            
            \node[circle,draw] (Y2) at (-3,-2) {$y_2$};
            \node[circle,draw] (Y3) at (-1.5,-2) {$y_3$};
            
            \node (dotsX) at (0,2) {$\dots$};
            \node (dots) at (0,0) {$\dots$};
            \node (dotsY) at (0,-2) {$\dots$};
            
            \node[circle,draw] (OQ-2) at (1.5,0) {$O_{q-2}$};
            \node[circle,draw] (OQ-1) at (3,0) {$O_{q-1}$};
            \node[circle,draw] (OQ) at (4.5,0) {$O_q$};
            \node[circle,draw] (OQ+1) at (6,0) {$O_{q+1}$};
            
            \node[circle,draw] (XQ-2) at (1.5,2) {$x_{q-2}$};
            \node[circle,draw] (XQ-1) at (3,2) {$x_{q-1}$};
            \node[circle,draw] (XQ) at (4.5,2) {$x_q$};

            \node[circle,draw] (YQ-2) at (1.5,-2) {$y_{q-2}$};
            \node[circle,draw] (YQ-1) at (3,-2) {$y_{q-1}$};
            \node[circle,draw] (YQ) at (6,-2) {$y_q$};
            
            \node[circle,draw] (B) at (8,0) {$b$};

            \draw (T) -- (O1) node [midway,above] {$q+1$};
            \draw (O1) -- (O2) -- (O3);
            
            \draw (O1) -- (X1) node [midway, left] {$q$};
            \draw (O2) -- (X2) node [midway, left] {$q-1$};
            \draw (O3) -- (X3) node [midway, left] {$q-2$};

            \draw (O2) -- (Y2) node [midway, left] {$q+2$};
            \draw (O3) -- (Y3) node [midway, left] {$q+3$};
            
            \draw (OQ-2) -- (XQ-2) node [midway, left] {$3$};
            \draw (OQ-1) -- (XQ-1) node [midway, left] {$2$};
            \draw (OQ) -- (XQ) node [midway, left] {$1$};
            
            \draw (OQ-2) -- (YQ-2) node [midway, left] {$k-3$};
            \draw (OQ-1) -- (YQ-1) node [midway, left] {$k-2$};
            \draw (OQ+1) -- (YQ) node [midway, left] {$k-2$};

            \draw (OQ-2) -- (OQ-1) -- (OQ) -- (OQ+1);
            \draw (OQ+1) -- (B) node [midway, above] {1};
        \end{tikzpicture}
        \caption{The $k$-leaf root $T_I$ of the interior gadget for odd $k=2q+1$. (Recall these will be connected in series below the leaf root of the top gadget; see Figure~\ref{fig:Trees}.)}\label{fig:tree-odd1}
    \end{figure}
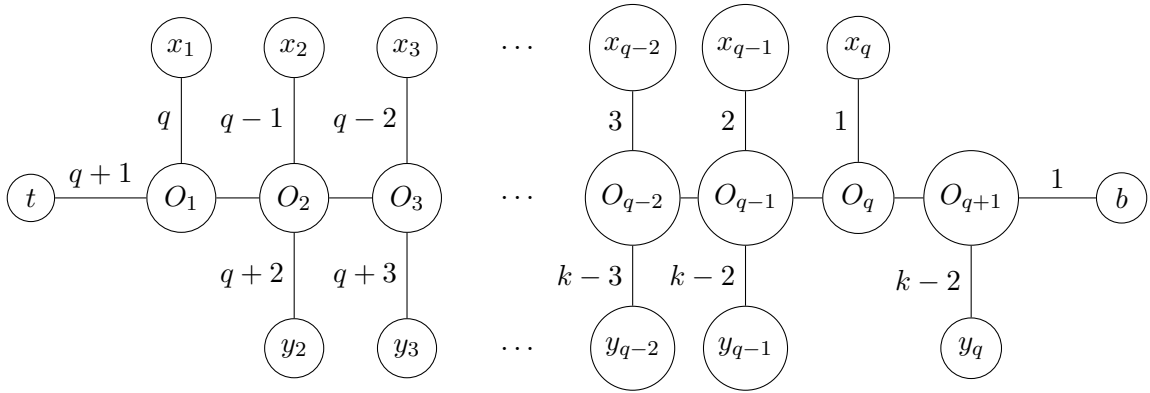
This construction is shown in Figure~\ref{fig:tree-odd1}.
        It remains to verify that this is a valid $k$-leaf root of $I$, that is, the leaf vertices at distance at most $k$ in the tree are exactly the edges of $I$.  
The required case analysis follows.
        
        \begin{itemize}
            \item The path from $t$ to $b$ has length $k+1>k$ and $t$ and $b$ are not neighbors, as desired.
            \item For $i=1,\dots, q$, the path from $t$ to $x_i$ goes through $O_i$. That is, it is the path from $t$ to $O_i$ which has length $q+i$ followed by the path from $O_i$ to $x_i$ which has length $q-i+1$. So, the path from $t$ to $x_i$ has length $(q+i)+(q-i+1)=k$.
            \item The path from $t$ to $y_q$ goes through $O_{q+1}$, so it has length $(2q+1)+(k-2)>k$.
            \item For $i = 1, \ldots, q$, the path from $b$ to $x_i$ goes through $O_q$ and $O_i$ so it has length $2+(q-i)+(q-i+1)\leq k$. In particular, for $i=q$, the path from $b$ to $x_q$ has length $3$.
            \item The path from $b$ to $y_q$ goes through $O_{q+1}$ so it has length $1+(k-2)\leq k$
            \item For $1 \leq i < j \leq q$, the path from $x_i$ to $x_j$ goes through $O_i$ and $O_j$ so it has length $(q-i+1)+ (j - i) + (q-j+1)\leq k$
            \item 
            For $i = 1, \ldots, q$ and $j = 2, \ldots, q - 1$, the path from $x_i$ to $y_j$ goes through $O_i$ and $O_j$ (possibly the same vertex) so it has length $(q-i+1)+(|i-j|)+(q+j)=k+(j-i)+|i-j|$ which is at most $k$ if and only if $i\geq j$.
            \item For $i = 1, \ldots, q$, the path from $x_i$ to $y_q$ goes through $O_i$ and $O_{q+1}$ so it has length $(q-i+1)+(q+1-i)+(k-2)$ which is equal to $k$ if $i=q$ and strictly greater than $k$ otherwise.
         \end{itemize}
         For $k=5$ the case analysis is complete.
         If $k\geq 7$ then $q\geq 3$ and so $|Y|>1$. Thus we have four more cases to verify:
         \begin{itemize}
            \item For $i = 2, \ldots, q - 1$, the path from $t$ to $y_i$  goes through $O_i$, so it has length $(q+i)+(q+i)>k$.
            \item For $i = 2, \ldots, q - 1$, the path from $b$ to $y_i$ goes through $O_q$ and $O_i$ so it has length $2+(q-i)+(q+i)>k$.
            \item For $2 \leq i < j \leq q - 1$, the path from $y_i$ to $y_j$ goes through $O_i$ and $O_j$, so it has length $(q+i)+(j - i)+(q+j)>k$.
            \item For $i = 2, \ldots, q - 1$, the path from $y_i$ to $y_q$ goes through $O_i$ and $O_{q+1}$ so it has length $(q+i)+(q+1-i)+(k-2)>k$.
         \end{itemize}
         Hence the desired $k$-leaf root exists.
    \end{proof}

    It remains to prove the final property to conclude that the claim holds when $k$ is odd:
    
    \begin{claim}\label{cl:induction-odd-Bot}
        For $I$ as defined in~(\ref{eq:odd-gadget}):
        
        There exists a $k$-leaf root $R_I$ of $I$ such that $m_{R_I}(t)=k-1$ and $m_{R_I}(b)=4$.
    \end{claim}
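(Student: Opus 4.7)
The plan is to construct $R_I$ as a small modification of the tree $T_I$ built in Claim~\ref{cl:induction-odd-Top-2}. Intuitively, we want $m_{R_I}(t)$ to be one less than $m_{T_I}(t) = k$ and $m_{R_I}(b)$ to be one more than $m_{T_I}(b) = 3$, so I will extend the central $t$-to-$b$ path by one edge (shifting every internal vertex one step farther from $b$ and thereby raising $d(b, x_q)$ from $3$ to $4$) and shorten the attachment path of $x_1$ by one edge (lowering $d(t, x_1)$ from $k$ to $k - 1$).

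Concretely, I take a path of length $k + 2 = 2q + 3$ from $t$ to $b$, label the vertex at distance $q + i$ from $t$ as $O_i$ for $i = 1, \ldots, q + 1$, and attach $x_1$ to $O_1$ by a path of length $q - 1$, $x_i$ to $O_i$ by a path of length $q - i + 1$ for $2 \le i \le q$, $y_q$ to $O_{q+1}$ by a path of length $k - 2$, and (when $k \ge 7$) $y_i$ to $O_i$ by a path of length $q + i$ for $2 \le i \le q - 1$. The two decisive numerical facts are $d(t, x_1) = (q + 1) + (q - 1) = k - 1$ and $d(b, x_q) = d(b, O_q) + 1 = 3 + 1 = 4$, yielding $m_{R_I}(t) \le k - 1$ and $m_{R_I}(b) \le 4$.

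The remainder is a case analysis on pairs of leaves, entirely parallel to the verification in Claim~\ref{cl:induction-odd-Top-2}, checking that every edge of $I$ has its endpoints at distance at most $k$ in $R_I$, that every non-edge has distance strictly greater than $k$, and that $d(t, z) \ge k - 1$ and $d(b, z) \ge 4$ for every other leaf $z$. Most cases go through unchanged after absorbing the uniform shift of the spine; the delicate cases involve the shortened attachment of $x_1$, where one must verify $d(x_1, y_q) = (q - 1) + q + (k - 2) = 4q - 2$ and $d(x_1, y_i) = (q - 1) + (i - 1) + (q + i) = 2q + 2i - 2$ for $i \ge 2$ both exceed $k = 2q + 1$. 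The first inequality reduces to $q \ge 2$, i.e., $k \ge 5$, which is precisely the hypothesis of the lemma and is the main obstruction if one tried to push the construction down to smaller $k$.
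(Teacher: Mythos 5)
Your construction is exactly the paper's: lengthen the $t$--$b$ spine to $k+2$ (pushing $b$ to distance $2$ from $O_{q+1}$) and shorten $x_1$'s pendant path to $q-1$, and you identify the same decisive distances ($d(t,x_1)=k-1$, $d(b,x_q)=4$) and the same delicate residual cases ($x_1$ against the $y_i$ and $y_q$, which is where $q\ge 2$ is needed). The proposal is correct and takes essentially the same approach as the paper.
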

    
    \begin{proof}
        To construct $R_I$ we make two minor modifications to the tree $T_I$ used to prove Claim~\ref{cl:induction-odd-Top-2}. First we start with a path of length $k+2$ from $t$ to $b$. To do this we simply place $b$ at distance $2$ from $O_{q+1}$ instead of $1$. All the remaining vertices are then placed using the same process {\em except} for $x_1$ which is now at distance $q-1$ from $O_1$ instead of at distance $q$. The resultant tree is shown in Figure~\ref{fig:tree-odd2}.
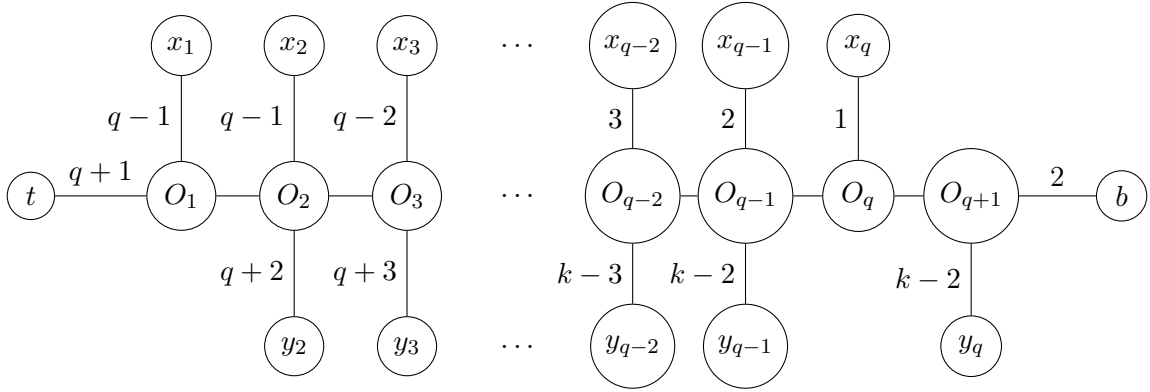
\begin{figure}[h!]
        \centering
\begin{tikzpicture}
            \node[circle,draw] (T) at (-6.5,0) {$t$};
            
            \node[circle,draw] (O1) at (-4.5,0) {$O_1$};
            \node[circle,draw] (O2) at (-3,0) {$O_2$};
            \node[circle,draw] (O3) at (-1.5,0) {$O_3$};
            
            \node[circle,draw] (X1) at (-4.5,2) {$x_{1}$};
            \node[circle,draw] (X2) at (-3,2) {$x_2$};
            \node[circle,draw] (X3) at (-1.5,2) {$x_3$};
            
            \node[circle,draw] (Y2) at (-3,-2) {$y_2$};
            \node[circle,draw] (Y3) at (-1.5,-2) {$y_3$};
            
            \node (dotsX) at (0,2) {$\dots$};
            \node (dots) at (0,0) {$\dots$};
            \node (dotsY) at (0,-2) {$\dots$};
            
            \node[circle,draw] (OQ-2) at (1.5,0) {$O_{q-2}$};
            \node[circle,draw] (OQ-1) at (3,0) {$O_{q-1}$};
            \node[circle,draw] (OQ) at (4.5,0) {$O_q$};
            \node[circle,draw] (OQ+1) at (6,0) {$O_{q+1}$};
            
            \node[circle,draw] (XQ-2) at (1.5,2) {$x_{q-2}$};
            \node[circle,draw] (XQ-1) at (3,2) {$x_{q-1}$};
            \node[circle,draw] (XQ) at (4.5,2) {$x_q$};

            \node[circle,draw] (YQ-2) at (1.5,-2) {$y_{q-2}$};
            \node[circle,draw] (YQ-1) at (3,-2) {$y_{q-1}$};
            \node[circle,draw] (YQ) at (6,-2) {$y_q$};
            
            \node[circle,draw] (B) at (8,0) {$b$};

            \draw (T) -- (O1) node [midway,above] {$q+1$};
            \draw (O1) -- (O2) -- (O3);
            
            \draw (O1) -- (X1) node [midway, left] {$q-1$};
            \draw (O2) -- (X2) node [midway, left] {$q-1$};
            \draw (O3) -- (X3) node [midway, left] {$q-2$};

            \draw (O2) -- (Y2) node [midway, left] {$q+2$};
            \draw (O3) -- (Y3) node [midway, left] {$q+3$};
            
            \draw (OQ-2) -- (XQ-2) node [midway, left] {$3$};
            \draw (OQ-1) -- (XQ-1) node [midway, left] {$2$};
            \draw (OQ) -- (XQ) node [midway, left] {$1$};
            
            \draw (OQ-2) -- (YQ-2) node [midway, left] {$k-3$};
            \draw (OQ-1) -- (YQ-1) node [midway, left] {$k-2$};
            \draw (OQ+1) -- (YQ) node [midway, left] {$k-2$};

            \draw (OQ-2) -- (OQ-1) -- (OQ) -- (OQ+1);
            \draw (OQ+1) -- (B) node [midway, above] {2};
        \end{tikzpicture}
        \caption{The $k$-leaf root $R_I$ of the interior gadget for odd $k=2q+1$. (Recall these will be connected in series above the leaf root of the bottom gadget; see Figure~\ref{fig:Trees}.)}\label{fig:tree-odd2}
    \end{figure}    
    It suffices to verify that all the vertices are still at a correct distance from $x_1$ and from~$b$.
        \begin{itemize}
            \item The distance from $x_1$ to all other vertices except $b$ has decreased by $1$ so we need to verify that the $y_i$ and $y_q$ are still at distance at least $k+1$. For $i = 2, \ldots, q - 1$, the distance from $x_1$ to $y_i$ is $(q-1)+(i-1)+(q+i)>k$ (since $i \geq 2$). The distance from $x_1$ to $y_q$ is $(q-1)+q+(k-2) = 2q + k - 3 > k$ (since $q \geq 2$).

            \item The distance from $b$ to all other vertices except $x_1$ has increased by $1$ so we need to verify that the $x_i$'s and $y_q$ are still at distance at most $k$. For $i\geq 2$, the distance from $b$ to $x_i$ is $3+(q-i)+(q-i+1)\leq k$. The distance from $b$ to $y_q$ is $2+(k-2)=k$. Moreover, all distances from $b$ to an $x_i$ vertex are now at least $4$ and not $3$ (including $x_1$, which is at distance $k=2q+1$ from $b$). In particular, the distance between $b$ and $x_q$ is exactly $4$.
        \end{itemize}
      Observing that $t$ is now at distance $k - 1$ from $x_1$ and is still at distance $k$ to its other neighbors, we get $m_{R_I}(t) = k - 1$ and $m_{R_I}(b) = 4$.  Hence the desired $k$-leaf root exists.
    \end{proof}

    We have now proven the result holds for $k$ odd. Let's now prove it for $k$ even. 
    The construction of the graph $I$ for even values of $k$ is very similar to the construction for odd values of $k$, but is slightly more intricate. Take $q=\frac{k}{2}$. The vertex set of $G$ is then
    \begin{itemize}
        \item $t$ and $b$
        \item $X=\set{x_1,\dots, x_q}$
        \item $Y=\set{y_2,\dots, y_q}$
        \item $z_1$ and $z_2$.
    \end{itemize}
    The edge set of $G$ is defined as follows:
    \begin{itemize}
        \item $(t,x_1)$ is an edge and $\forall i=2,\dots q$, $(t,x_i)$, $(b,x_i)$, $(z_1,x_i)$ and $(z_2,x_i)$ are all edges. That is, $t$ is adjacent to all vertices in $X$ while $b$, $z_1$ and $z_2$ are adjacent to all vertices in $X$ except $x_1$.
        \item For $i=1,\dots, q$, for $j=i+1,\dots q$, $(x_i,x_j)$ is an edge. That is, $X$ forms a clique.
        \item For $i=2,\dots, q$, for $j=i,\dots, q$, $(y_i,x_j)$ is an edge. 
        \item $(b,y_q)$ is an edge.
        \item $(z_1,b)$ and $(z_2,b)$ are both edges, in particular, for all $i\geq 2$, $\set{z_1,z_2,b,x_i}$ will form a diamond (with the $(z_1,z_2)$ edge being missing).
    \end{itemize}
Equivalently, it is again informative to 
define the set of edges using the neighborhoods of each vertex:
    \begin{itemize}
        \item $t$ and $x_1$ are adjacent to each other and to $X$.
        \item $b$ is adjacent to $X\setminus\{x_1\}$, to $y_q$ and to $z_1$ and $z_2$.
        \item For $i=2,\dots q$, $x_i$ is adjacent to $t$, $b$, $z_1$, $z_2$, $X\setminus \{x_i\}$ and to $y_j$ for $j=1,\dots, i$.
        \item For $i=2,\dots, q$, for $j=i,\dots, q$, $y_i$ is adjacent to $x_j$. 
        \item $y_q$ is adjacent to $x_q$ and $b$.
        \item $z_1$ and $z_2$ are adjacent to $X\setminus \{x_1\}$ and to $b$.
    \end{itemize}
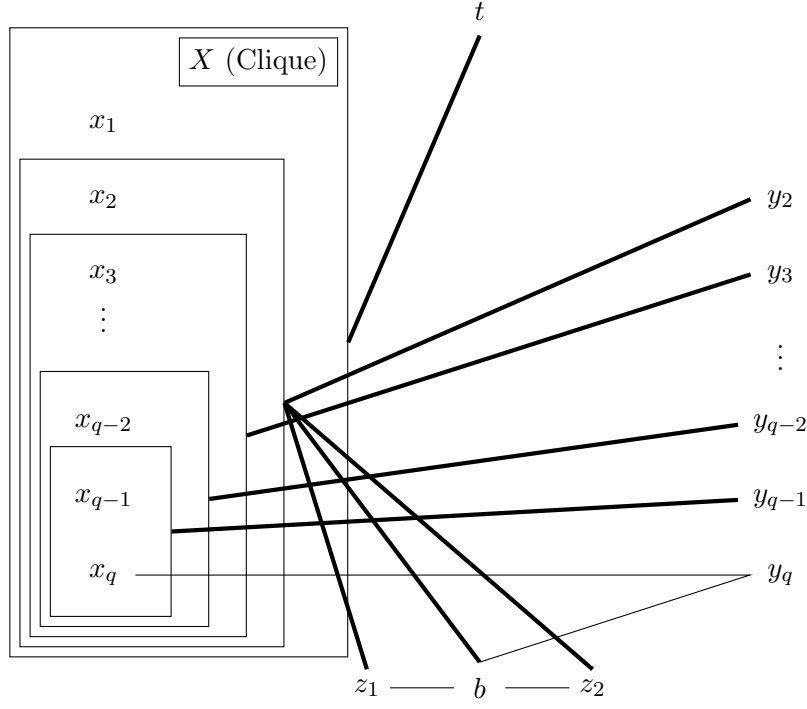
\begin{figure}[h!]
        \centering
            \begin{tikzpicture}
                \node[circle] (T) at (0,4.5) {$t$};
                \node[circle] (B) at (0,-4.5) {$b$};
                \node[circle] (X1) at (-5,3) {$x_1$};
                \node[circle] (X2) at (-5,2) {$x_2$};
                \node[circle] (X3) at (-5,1) {$x_3$};
                \node[circle] (Xdots) at (-5,0.5) {$\vdots$};
                \node[circle] (XQ-2) at (-5,-1) {$x_{q-2}$};
                \node[circle] (XQ-1) at (-5,-2) {$x_{q-1}$};
                \node[circle] (XQ) at (-5,-3) {$x_q$};
                
                \node[above right] (XQ-1-label) at (-4.5,-2) {};
                \node[above right] (XQ-2-label) at (-4,-1){};
                \node[above right] (X3-label) at (-3.5,1) {};
                \node[above right] (X2-label) at (-3,2) {};
                \node[above right,draw] (X-label) at (-4,3.5) {$X$ (Clique)};
                
                \node [align=left,fit=(XQ-1)(XQ)(XQ-1-label), draw] (XQ-1-XQ){};
                \node [align=left,fit=(XQ-2)(XQ-2-label)(XQ-1-XQ), draw] (XQ-2-XQ) {};
                \node [align=left,fit=(X3)(XQ-2-XQ)(X3-label), draw] (X3-XQ){};
                \node [align=left,fit=(X2)(X3-XQ)(X2-label), draw] (X2-XQ){};

                \node[circle] (Y2) at (4,2) {$y_2$};
                \node[circle] (Y3) at (4,1) {$y_3$};
                \node[circle] (Ydots) at (4,0) {$\vdots$};
                \node[circle] (YQ-2) at (4,-1) {$y_{q-2}$};
                \node[circle] (YQ-1) at (4,-2) {$y_{q-1}$};
                \node[circle] (YQ) at (4,-3) {$y_q$};
                \node (Z1) at (-1.5,-4.5) {$z_1$};
                \node (Z2) at (1.5,-4.5) {$z_2$};
                \node [fit=(X1)(X2-XQ)(X-label), draw] (X1-XQ) {};
                \draw[ultra thick] (X1-XQ.east) -- (T.south);
                \draw[ultra thick] (X2-XQ.east) -- (B.north);
                \draw[ultra thick] (X2-XQ.east) -- (Z1.north);
                \draw[ultra thick] (X2-XQ.east) -- (Z2.north);
                \draw[ultra thick] (X2-XQ.east) -- (Y2.west);
                \draw[ultra thick] (X3-XQ.east) -- (Y3.west);
                \draw[ultra thick] (XQ-2-XQ.east) -- (YQ-2.west);
                \draw[ultra thick] (XQ-1-XQ.east) -- (YQ-1.west);
                \draw (B.north) -- (YQ.west) -- (XQ.east);
                \draw (Z1) -- (B) -- (Z2);
            \end{tikzpicture}
        \caption{The interior gadget for even $k$.}\label{fig:interior-even}
    \end{figure}

    That is, we take $I=(V,E)$ to be defined by: 
    \begin{equation}
        \begin{aligned}
            V=&\set{t,b,z_1,z_2} \cup\left(\bigcup_{i=1}^q \set{x_i}\right)\cup\left(\bigcup_{i=2}^{q} \{y_i\}\right)\\
            E=&\set{(t,x_1)}\cup\left(\set{\bigcup_{i=2}^q \set{(t,x_i),(b,x_i),(z_1,x_i),(z_2,x_i)}}\right)\\
            &\qquad \cup \left(\bigcup_{1\leq i<j\leq q}\set{(x_i,x_j)}\right)\cup\left(\bigcup_{2\leq i\leq j\leq q}\set{(y_i,x_j)}\right)\cup\set{(b,y_q),(z_1,b),(z_2,b)}
        \end{aligned}
        \label{eq:even-gadget}
    \end{equation}

This construction is illustrated in Figure~\ref{fig:interior-even}.
    We remark that this construction only makes sense for $k\geq 4$. If $k=2$ then $Y$ is not well defined. But, while the construction makes sense for $k=4$, we will show later where it fails to work.

    Recall $z_1$ and $z_2$ form a diamond with $b$ and $x_i$ for any $i\geq 2$. Therefore, by Corollary~\ref{cor:diamond}, we have that $d(b,x_i)\neq k$ in any $k$-leaf root, in particular this is true for $i=2$ so we get $d(b,x_2)\neq k$.

    \begin{claim}\label{cl:induction-even-Top-1}
        For $I$ as defined in~(\ref{eq:even-gadget}):
        
        For all $k$-leaf roots $T$ of $I$, $m_T(t)=k\Longrightarrow m_T(b)=3$.
    \end{claim}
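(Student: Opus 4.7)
The plan is to mirror the odd-case argument of Claim~\ref{cl:induction-odd-Top-1}, with the $z$-diamond $\{z_1, z_2, b, x_2\}$ providing the extra leverage needed to compensate for a changed parity situation. Since $t$ is adjacent to every vertex of $X$, the hypothesis $m_T(t) = k$ forces $d(t, x_i) = k$ for every $i \in [q]$, where $q = k/2 \geq 3$.

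First I would extract the same internal distance structure on $X$ as in the odd case. By Lemma~\ref{lem:even} applied to $(t, x_i, x_j)$, each $d(x_i, x_j)$ is even; and for $1 \leq i < j < \ell \leq q$, the vertex $y_j$ is adjacent to $x_j$ and $x_\ell$ but not to $x_i$ or $t$, so Lemma~\ref{lem:closer} applied to $(t, x_i, x_j, x_\ell)$ with witness $y_j$ gives $d(x_i, x_j) = d(x_i, x_\ell)$ and $d(x_j, x_\ell) < d(x_i, x_j)$. Setting $\lambda_i := d(x_i, x_j)$ for any $j > i$ produces a strictly decreasing sequence $\lambda_1 > \cdots > \lambda_{q-1}$ of even integers lying in $(2, k]$; since that interval contains exactly $q - 1$ even integers, we must have $\lambda_i = k + 2 - 2i$ (so $\lambda_1 = k$ and $\lambda_{q-1} = 4$).

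Next I would bring $b$ into play through $y_q$, which is adjacent to $x_q$ and $b$ but to no other $x_i$ nor to $t$. Applying Lemma~\ref{lem:closer} to $(t, x_i, x_q, b)$ for each $i < q$ gives both $d(x_q, b) < d(x_i, b)$ and $d(x_i, b) = d(t, b) + 2 - 2i$. Here the $z$-vertices become essential: because $\{z_1, z_2, b, x_2\}$ induces a diamond with non-edge $(z_1, z_2)$, Corollary~\ref{cor:diamond} forces $d(b, x_2) \neq k$, hence $d(b, x_2) \leq k - 1$; combined with $d(x_2, b) = d(t, b) - 2$, this yields $d(t, b) \leq k + 1$. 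Together with $d(t, b) > k$ from the non-adjacency of $t$ and $b$, we conclude $d(t, b) = k + 1$, whence $d(x_{q-1}, b) = 5$.

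To finish, I would apply Lemma~\ref{lem:even} to $(b, x_i, x_j)$ for $i, j \geq 2$: since $d(x_i, x_j) = \lambda_{\min(i,j)}$ is even, all the distances $d(b, x_i)$ with $i \geq 2$ share the same parity, namely the parity of $d(b, x_{q-1}) = 5$, which is odd. Thus $d(b, x_q)$ is odd, and the strict inequality $d(b, x_q) < d(b, x_{q-1}) = 5$ combined with the baseline $d(b, x_q) \geq 3$ pins $d(b, x_q) = 3$, giving $m_T(b) = 3$. The main obstacle, compared to the odd case, is fixing the value of $d(t, b)$: in that case $\lambda_1 = k - 1$ made the bound $d(t, b) = k + 1$ immediate from Lemma~\ref{lem:closer}, while here $\lambda_1 = k$ collapses the analogous computation to the tautology $d(t, b) = d(x_1, b)$, so the $z$-diamond is precisely the device built into the gadget to supply the missing upper bound.
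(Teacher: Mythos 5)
Your proof is correct and follows essentially the same route as the paper's: derive $d(x_i,x_j)=k+2-2i$ via Lemma~\ref{lem:closer} and parity, use $y_q$ to get $d(x_i,b)=d(t,b)+2-2i$, invoke Corollary~\ref{cor:diamond} on the diamond $\{z_1,z_2,b,x_2\}$ to pin $d(t,b)=k+1$, and finish with $d(x_{q-1},b)=5$ plus the strict inequality and parity to force $d(x_q,b)=3$. Your closing observation about why the $z$-diamond is needed (the $i=1$ relation degenerates because $\lambda_1=k$ and $x_1b$ is a non-edge) matches the paper's reason for introducing $z_1,z_2$ in the even case.
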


    \begin{proof}    
        Most of the arguments used to prove Claim~\ref{cl:induction-odd-Top-1} are still valid. Using Lemma~\ref{lem:closer} we can show that $d(x_i,x_j)=k+2-2i$. Furthermore, since $k\geq 6$, $q\geq 3$ and for $2\leq i\leq q-1$, we have $d(x_q,b)+d(x_i,t)<d(x_i,x_q)+d(t,b)=d(x_q,t)+d(x_i,b)$. 
        
        However $(x_1,b)$ is not an edge in this case so instead we must consider $i=2$. Now we get:
        \begin{align*}
            &\ d(x_q,x_2)+d(t,b)=d(x_q,t)+d(x_2,b) \\
            \Longrightarrow &\ k-2+d(t,b)=k+d(x_2,b)\\
            \Longrightarrow &\ d(t,b)=2+d(x_2,b)
        \end{align*}
        Moreover, we must have $d(t,b)>k$ and $d(x_2,b)\leq k$. So we get either $d(t,b)=k+2$ and $d(x_2,b)=k$ or $d(t,b)=k+1$ and $d(x_2,b)=k-1$. But we cannot have $d(x_2,b)=k$ as this violates Corollary~\ref{cor:diamond} when considering the diamond $(b,z_1,x_2,z_2)$. Hence, we must have $d(x_2,b)=k-1$ and $d(t,b)=k+1$. 
        
        Next, as in Claim~\ref{cl:induction-odd-Top-1}, consider $i=q-1$. Then as $d(x_{q-1}, x_q) = k + 2 - 2(q - 1) = k + 2 - (k - 2) = 4$, we get
        \begin{align*}
            &\ d(x_{q-1},x_{q})+d(t,b)=d(x_q,t)+d(x_{q-1},b)\\
            \Longrightarrow &\ 4+k+1=k+d(x_{q-1},b)\\
            \Longrightarrow &\ d(x_{q-1},b)=5
        \end{align*}
        As before, we have shown that $d(x_q,b)+d(x_{q-1},t) < d(x_q,x_{q-1})+d(t,b)$. This implies 
        $d(x_q,b)+k < 4+(k+1)$ and so $d(x_q,b)<5$. Moreover, by Lemma~\ref{lem:even}, $d(x_q,b)+d(x_q,x_{q-1})+d(x_{q-1},b)$ is even. Consequently $d(x_q,b)$ must be odd. 
        But $d(x_q,x_{q-1})=4$ is even and $d(x_{q-1},b)=5$ is odd.
        Thus $d(x_q,b)$ must be odd {\bf and} less than 5. Hence $d(x_q,b)=3$, as desired.
    \end{proof}
    \begin{claim}\label{cl:induction-even-Top-2}
        For $I$ as defined in~\ref{eq:even-gadget}:
        
        There exists a $k$-leaf root $T_I$ of $I$ such that $m_{T_I}(t)=k$ and $m_{T_I}(b)=3$.
    \end{claim}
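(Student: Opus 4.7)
The plan is to construct the $k$-leaf root $T_I$ explicitly, adapting the odd-case tree of Figure~\ref{fig:tree-odd1} to accommodate the two extra vertices $z_1,z_2$ and the modified neighbourhood of $b$ (with $x_1\notin N(b)$). Set $q=k/2$. The backbone is a path from $t$ to an internal vertex $C_q$ of length $k-1$, with labelled internal vertices $C_1,\dots,C_q$ placed so that $d(t,C_j)=q+j-1$. I would hang $x_j$ off $C_j$ by a path of length $q-j+1$ (yielding $d(t,x_j)=k$ for every $j$), and hang $y_j$ off $C_j$ by a path of length $q+j-1$ for $j=2,\dots,q-1$, making each such $y_j$ adjacent to exactly the $x_{j'}$ with $j'\geq j$, as in the odd case.

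The bottom of the tree is where the even construction must be more intricate. I would attach a new internal vertex $A$ adjacent to $C_q$ and attach $b$ as a leaf adjacent to $A$, forcing $d(b,x_q)=3$ and $d(t,b)=k+1$. Then hang $y_q$ off $C_q$ by a path of length $k-2$ (so that $d(y_q,b)=k$ and $d(y_q,x_q)=k-1$), hang $z_1$ off $A$ by a path of length $2$ (giving $d(z_1,b)=3$ and $d(z_1,x_q)=4$), and, crucially, hang $z_2$ off $C_2$ by a path of length $q$.

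The main obstacle is the compatible placement of $z_1,z_2,y_q$: each of $z_1$ and $z_2$ must be within tree-distance $k$ of $b$ and of every $x_j$ for $j\geq 2$, while the non-adjacencies $z_1z_2$, $z_1y_q$, $z_2y_q$, $z_ix_1$, $z_it$, and $z_iy_j$ for $j<q$ must simultaneously force distance strictly greater than $k$. Attaching $z_2$ far up the backbone at $C_2$, rather than near $b$ as one might naively try, is the key trick. With this placement the unique tree path from $z_1$ to $z_2$ traverses $A,C_q,C_{q-1},\dots,C_2$, giving $d(z_1,z_2)=k+1$; meanwhile $d(z_2,x_j)=k-1$ for every $j\geq 2$ because $C_2$ lies on the unique tree path from $z_2$ to each $C_j$, and $d(z_2,b)=k$ because the path from $z_2$ down the backbone to $A$ and then to $b$ has length $q+(q-2)+1+1=k$.

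To finish, I would verify that the tree so defined is a valid $k$-leaf root of $I$ by a systematic pairwise distance check, exactly analogous to the bullet-list verification in the proof of Claim~\ref{cl:induction-odd-Top-2}: for every pair of leaves compute the length of the unique tree path and confirm it is at most $k$ precisely when the pair is an edge of $I$. Most cases are routine and mirror the odd-case calculations; the tight identities $d(t,x_j)=k$ for every $j$, $d(b,x_q)=d(b,z_1)=3$, $d(b,y_q)=d(b,z_2)=k$, and $d(z_1,z_2)=k+1$ then yield $m_{T_I}(t)=k$ and $m_{T_I}(b)=3$, as required.
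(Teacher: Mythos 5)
Your construction is correct and follows essentially the same strategy as the paper's: an explicit backbone path from $t$ with each $x_j$, $y_j$, $z_i$ hung off at carefully chosen depths so that $d(t,x_j)=k$, $d(b,x_q)=3$, and all adjacencies of $I$ are realized, verified by a pairwise distance check. The only substantive difference is the placement of $z_1$ (you hang it at distance $3$ from $b$ near the bottom, whereas the paper hangs both $z_1$ and $z_2$ at depth $q$ from the upper backbone vertices $O_2$ and $O_3$); I checked all the resulting distances and both placements satisfy every adjacency and non-adjacency constraint, so your tree is a valid $k$-leaf root with $m_{T_I}(t)=k$ and $m_{T_I}(b)=3$.
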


    \begin{proof}
        We construct $I$ similarly to the proof of Claim~\ref{cl:induction-odd-Top-2}. Recall that $k=2q$.
        \begin{enumerate}
            \item Take a path of length $k+1$ from $t$ to $b$. 
            
            \item Label as $O_i$ the vertex along the path from $t$ to $b$ at distance $q-1+i$ from $t$, for $i=1,\dots, q+1$.

            \item Add a path of length $q-i+1$ from $O_i$ to $x_i$ for $i=1,\dots,q$.
                        
            \item Add a path of length $q+i-1$ from $O_i$ to $y_i$ for $i=2,\dots, q-1$.

            \item Add a path of length $k-2$ from $O_{q+1}$ to $y_q$.
            
            \item Add a path of length $q$ from $O_2$ to $z_1$ and a path of length $q$ from $O_3$ to $z_2$. (Since $k\geq 6$ we have $q\geq 3$ and, so, both $O_2$ and $O_3$ exist.)
        \end{enumerate}

    \begin{figure}[h!]
        \centering
        \begin{tikzpicture}
            \node[circle,draw] (T) at (-6.5,0) {$t$};
            
            \node[circle,draw] (O1) at (-4.5,0) {$O_1$};
            \node[circle,draw] (O2) at (-3,0) {$O_2$};
            \node[circle,draw] (O3) at (-1.5,0) {$O_3$};
            
            \node[circle,draw] (X1) at (-4.5,2) {$x_{1}$};
            \node[circle,draw] (X2) at (-3,2) {$x_2$};
            \node[circle,draw] (X3) at (-1.5,2) {$x_3$};
            
            \node[circle,draw] (Y2) at (-4,-2) {$y_2$};
            \node[circle,draw] (Y3) at (-1.5,-2) {$y_3$};
            
            \node (dotsX) at (0,2) {$\dots$};
            \node (dots) at (0,0) {$\dots$};
            \node (dotsY) at (0.5,-2) {$\dots$};
            
            \node[circle,draw] (OQ-2) at (1.5,0) {$O_{q-2}$};
            \node[circle,draw] (OQ-1) at (3,0) {$O_{q-1}$};
            \node[circle,draw] (OQ) at (4.5,0) {$O_q$};
            \node[circle,draw] (OQ+1) at (6,0) {$O_{q+1}$};
            
            \node[circle,draw] (XQ-2) at (1.5,2) {$x_{q-2}$};
            \node[circle,draw] (XQ-1) at (3,2) {$x_{q-1}$};
            \node[circle,draw] (XQ) at (4.5,2) {$x_q$};

            \node[circle,draw] (YQ-2) at (1.5,-2) {$y_{q-2}$};
            \node[circle,draw] (YQ-1) at (3,-2) {$y_{q-1}$};
            \node[circle,draw] (YQ) at (6,-2) {$y_q$};
            
            \node[circle,draw] (B) at (8,0) {$b$};
            \node[circle,draw] (Z1) at (-3,-2) {$z_1$};
            \node[circle,draw] (Z2) at (-0.5,-2) {$z_2$};
            
            \draw (T) -- (O1) node [midway,above] {$q$};
            \draw (O1) -- (O2) -- (O3);
            
            \draw (O1) -- (X1) node [midway, left] {$q$};
            \draw (O2) -- (X2) node [midway, left] {$q-1$};
            \draw (O3) -- (X3) node [midway, left] {$q-2$};

            \draw (O2) -- (Y2) node [midway, left] {$q+1$};
            \draw (O3) -- (Y3) node [midway, left] {$q+2$};
            
            \draw (OQ-2) -- (XQ-2) node [midway, left] {$3$};
            \draw (OQ-1) -- (XQ-1) node [midway, left] {$2$};
            \draw (OQ) -- (XQ) node [midway, left] {$1$};
            
            \draw (OQ-2) -- (YQ-2) node [midway, left] {$k-3$};
            \draw (OQ-1) -- (YQ-1) node [midway, left] {$k-2$};
            \draw (OQ+1) -- (YQ) node [midway, left] {$k-2$};

            \draw (OQ-2) -- (OQ-1) -- (OQ) -- (OQ+1);
            \draw (OQ+1) -- (B) node [midway, above] {1};

            \draw (O2) -- (Z1) node [midway, left] {$q$};
            \draw (O3) -- (Z2) node [midway, left] {$q$};
        \end{tikzpicture}
        \caption{The $k$-leaf root $R_I$ of the interior gadget for even $k=2q$. (Recall these will be connected in series below the leaf root of the top gadget; see Figure~\ref{fig:Trees}.)}\label{fig:tree-even1}
    \end{figure}
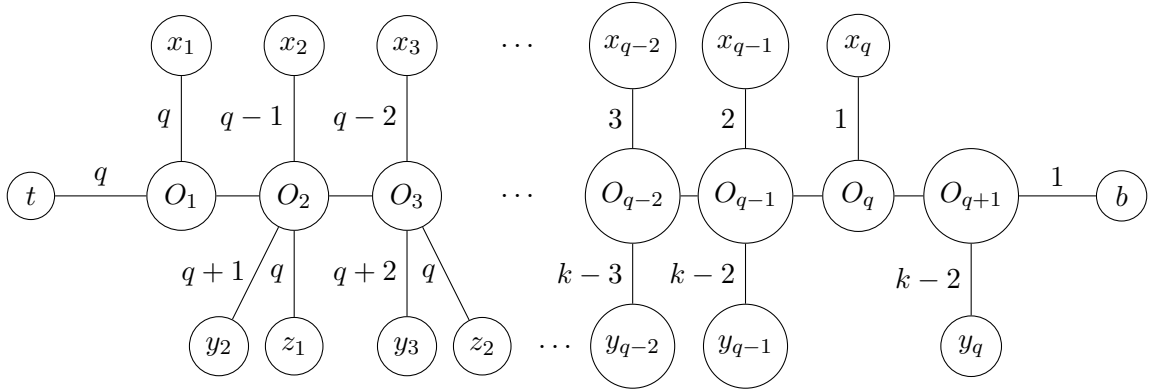
This graph $T_I$ is shown in Figure~\ref{fig:tree-even1}.
We must show $T_I$ is a $k$-leaf root of $I$. 
    The fact that these distances are valid follows from Claim~\ref{cl:induction-odd-Top-2} with the modification that $k=2q$. It remains to verify that $z_1$ and $z_2$ also have the correct neighborhoods.
        \begin{itemize}
            \item For all $i=2,\dots, q$, the vertices $t$, $x_1$, $y_q$, and $y_i$ are at distance at least $q+1$ from $O_2$ and $O_3$. Thus, they are at distance at least $2q+1 > k$ from $z_1$ and $z_2$.
            \item For $i=2,\dots,q$, the vertices $x_i$ and $b$ are both at distance at most $q$ from $O_2$ and from $O_3$. Thus they are all within distance $2q=k$ from $z_1$ and $z_2$.
            \item The path from $z_1$ to $z_2$ goes through both $O_2$ and $O_3$ so it has length $q+1+q>k$.
        \end{itemize}
        Hence the desired $k$-leaf root $T_I$ exists.
    \end{proof}

    \begin{claim}\label{cl:induction-even-Bot}
        For $I$ as defined in~\ref{eq:even-gadget}:
        
        There exists a $k$-leaf root $R_I$ of $I$ such that $m_{R_I}(t)=k-1$ and $m_{R_I}(b)=4$.
    \end{claim}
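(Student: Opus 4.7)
My plan is to exhibit $R_I$ explicitly by lightly modifying the tree $T_I$ built in Claim~\ref{cl:induction-even-Top-2}. Following the odd-case recipe from Claim~\ref{cl:induction-odd-Bot}, I will first lengthen the $t$-$b$ path to $k+2$ by placing $b$ at distance $2$ from $O_{q+1}$, and shorten the attachment of $x_1$ to $O_1$ distance $q-1$. These two moves will give $d(t,x_1)=k-1$ and $d(x_q,b)=1+1+2=4$ for free, yielding $m_{R_I}(t)\le k-1$ and $m_{R_I}(b)\le 4$.

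The wrinkle unique to the even case is that $z_1,z_2$ sit in $T_I$ at distance exactly $k$ and $k-1$ from $b$ respectively, so the path extension would push $d(z_1,b)$ up to $k+1$ and break the edge $(b,z_1)$. I plan to absorb this by sliding each of $z_1,z_2$ one $O$-vertex closer to $b$: relocate $z_1$ to $O_3$ at distance $q$ and $z_2$ to $O_4$ at distance $q$, where $O_4$ is read as $O_{q+1}$ when $q=3$. This keeps $d(z_1,z_2)=q+1+q=k+1>k$ intact (since $O_3$ and $O_4$ are still $1$ apart on the spine) and restores $d(z_1,b)=k$ and $d(z_2,b)=k-1$. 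However, moving $z_2$ to $O_4$ now puts it at distance $q+2+(q-1)=k+1$ from $x_2$, breaking the edge $(z_2,x_2)$. The fix I will apply is to also shorten $x_2$'s attachment to $O_2$ distance $q-2$, which drops $d(z_2,x_2)$ back to $k$ while keeping $d(t,x_2)=d(b,x_2)=k-1$, so that $(t,x_2)$ and $(b,x_2)$ still survive and the bound $m_{R_I}(t)=k-1$ is still attained (in fact at both $x_1$ and $x_2$).

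Closing out the proof will be a routine case analysis in the style of Claim~\ref{cl:induction-even-Top-2}: for every pair of leaves, check that the tree distance is $\le k$ if and only if the pair is an edge of $I$. The hard part will be the bookkeeping for the pairs perturbed by the four changes above---chiefly $(x_1,b)$, $(z_j,t)$, $(z_j,x_1)$, $(z_j,y_i)$, $(y_i,x_2)$ for $i\ge 3$, and $(z_1,z_2)$---plus the boundary case $q=3$, in which $z_2$ and $y_q$ end up sharing the attachment vertex $O_{q+1}$; there one verifies $d(z_2,y_q)=q+(k-2)=7>6$ by hand. Once all pairs are confirmed, the resulting tree $R_I$ is a $k$-leaf root of $I$ with $m_{R_I}(t)=k-1$ attained at $x_1$ (and $x_2$) and $m_{R_I}(b)=4$ attained at $x_q$, as required.
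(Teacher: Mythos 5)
Your construction is correct and essentially the same as the paper's: lengthen the spine so that $b$ sits at distance $2$ from $O_{q+1}$, shorten $x_2$'s pendant path to $q-2$, and slide $z_1,z_2$ down to $O_3,O_4$ at distance $q$. The only difference is your additional shortening of $x_1$'s pendant path to $q-1$ (carried over from the odd case), which the paper does not do; it is unnecessary here since $x_2$ already realizes $m_{R_I}(t)=k-1$, but it is harmless because all of $x_1$'s non-neighbours ($b$, $z_1$, $z_2$ and the $y_i$) remain at distance at least $k+1$.
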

    \begin{proof}

To construct $R_I$ we make four minor modifications to the tree $T_I$ used to prove Claim~\ref{cl:induction-even-Top-2}. First we start with a path of length $k+2$ from $t$ to $b$.
To do this we simply place $b$ at distance $2$ from $O_{q+1}$ instead of $1$. 
Second, we place $x_2$ at distance $q-2$ from $O_1$ instead of at distance $q-1$. Third and fourth, we place $z_1$ and $z_2$ at distance $q$ from $O_3$ and $O_4$, respectively.
The resultant tree is shown in Figure~\ref{fig:tree-even2}.
\begin{figure}[h!]
        \centering
        
        \begin{tikzpicture}
            \node[circle,draw] (T) at (-8,0) {$t$};
            
            \node[circle,draw] (O1) at (-6,0) {$O_1$};
            \node[circle,draw] (O2) at (-4.5,0) {$O_2$};
            \node[circle,draw] (O3) at (-3,0) {$O_3$};
            \node[circle,draw] (O4) at (-1.5,0) {$O_4$};
            
            \node[circle,draw] (X1) at (-6,2) {$x_{1}$};
            \node[circle,draw] (X2) at (-4.5,2) {$x_2$};
            \node[circle,draw] (X3) at (-3,2) {$x_3$};
            \node[circle,draw] (X4) at (-1.5,2) {$x_4$};
            
            \node[circle,draw] (Y2) at (-4.5,-2) {$y_2$};
            \node[circle,draw] (Y3) at (-3.5,-2) {$y_3$};
            \node[circle,draw] (Y4) at (-1.5,-2) {$y_4$};
            
            \node (dotsX) at (0,2) {$\dots$};
            \node (dots) at (0,0) {$\dots$};
            \node (dotsY) at (0.5,-2) {$\dots$};
            
            \node[circle,draw] (OQ-2) at (1.5,0) {$O_{q-2}$};
            \node[circle,draw] (OQ-1) at (3,0) {$O_{q-1}$};
            \node[circle,draw] (OQ) at (4.5,0) {$O_q$};
            \node[circle,draw] (OQ+1) at (6,0) {$O_{q+1}$};
            
            \node[circle,draw] (XQ-2) at (1.5,2) {$x_{q-2}$};
            \node[circle,draw] (XQ-1) at (3,2) {$x_{q-1}$};
            \node[circle,draw] (XQ) at (4.5,2) {$x_q$};

            \node[circle,draw] (YQ-2) at (1.5,-2) {$y_{q-2}$};
            \node[circle,draw] (YQ-1) at (3,-2) {$y_{q-1}$};
            \node[circle,draw] (YQ) at (6,-2) {$y_q$};
            
            \node[circle,draw] (B) at (8,0) {$b$};
            \node[circle,draw] (Z1) at (-2.5,-2) {$z_1$};
            \node[circle,draw] (Z2) at (-0.5,-2) {$z_2$};
            
            \draw (T) -- (O1) node [midway,above] {$q$};
            \draw (O1) -- (O2) -- (O3) -- (O4);
            
            \draw (O1) -- (X1) node [midway, left] {$q$};
            \draw (O2) -- (X2) node [midway, left] {$q-2$};
            \draw (O3) -- (X3) node [midway, left] {$q-2$};
            \draw (O4) -- (X4) node [midway, left] {$q-3$};

            \draw (O2) -- (Y2) node [midway, left] {$q+1$};
            \draw (O3) -- (Y3) node [midway, left] {$q+2$};
            \draw (O4) -- (Y4) node [midway, left] {$q+3$};
            
            \draw (OQ-2) -- (XQ-2) node [midway, left] {$3$};
            \draw (OQ-1) -- (XQ-1) node [midway, left] {$2$};
            \draw (OQ) -- (XQ) node [midway, left] {$1$};
            
            \draw (OQ-2) -- (YQ-2) node [midway, left] {$k-3$};
            \draw (OQ-1) -- (YQ-1) node [midway, left] {$k-2$};
            \draw (OQ+1) -- (YQ) node [midway, left] {$k-2$};

            \draw (OQ-2) -- (OQ-1) -- (OQ) -- (OQ+1);
            \draw (OQ+1) -- (B) node [midway, above] {2};

            \draw (O3) -- (Z1) node [midway, left] {$q$};
            \draw (O4) -- (Z2) node [midway, left] {$q$};
        \end{tikzpicture}
        \caption{The $k$-leaf root $R_I$ of the interior gadget for even $k=2q$. (Recall these will be connected in series above the leaf root of the bottom gadget; see Figure~\ref{fig:Trees}.)}\label{fig:tree-even2}
    \end{figure}
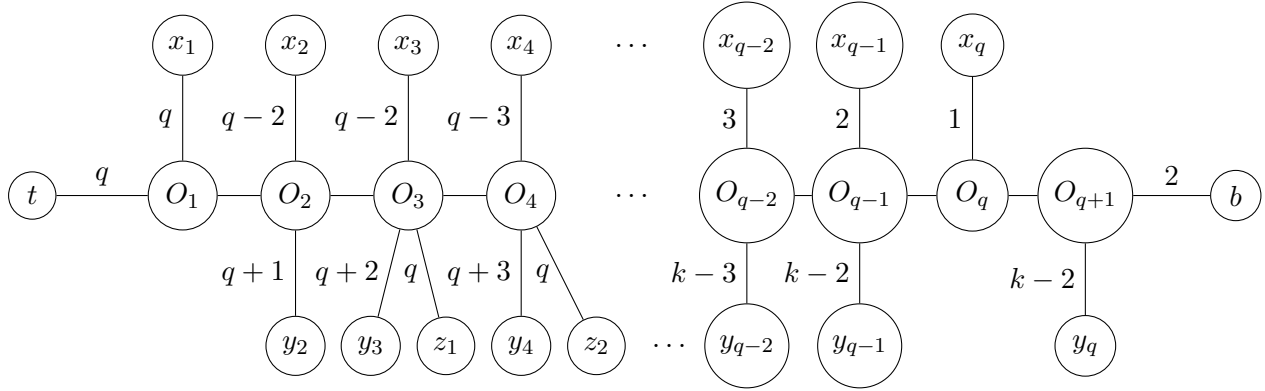

    We emphasize that this construction is \textbf{NOT} possible for $k=4$. This is because then $q=2$ and $O_4$, the vertex $O_4$ does not exist. Thus this construction applies for even $k\ge 6$.

        Finally, as in the proof of Claim~\ref{cl:induction-odd-Bot}, these changes do not change the neighborhood of the displaced vertices $b$ and $x_2$, $z_1$ and $z_2$.
        Hence the desired $k$-leaf root exists.
    \end{proof}

This completes the proof of the lemma.
\end{proof}
With our three critical lemmas proven, the main theorem now holds
by the method shown in Section~\ref{sec:proof}.

\section{Linear Leaf Powers}\label{sec:linear}

A {\em caterpillar} is a graph which has a central path and a set of leaves whose neighbor is on the central path. A graph is said to be a \textit{linear leaf power} if it has a leaf root which is the subdivision of a caterpillar. Such a leaf root is called a \textit{linear leaf root}~\cite{Bergougnoux}.

Our results apply not only to general leaf powers but also to this variant. Indeed, even if we restrict to having a subdivision of a caterpillar as a leaf root, it is impossible to get a simple forbidden
subgraph characterization of linear $k$-leaf powers for $k \geq 5$.

\begin{theorem}\label{thm:caterpillar}
    For $k\geq 5$, the set of linear $k$-leaf powers cannot be written as the set of strongly chordal graphs which are $\F_k$-free where $\F_k$ is a finite set of graphs.
\end{theorem}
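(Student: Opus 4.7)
The plan is to revisit the proof of Theorem~\ref{thm:main} and arrange the two witness $k$-leaf roots of $H_n-\T$ and $H_n-\B$ produced in Lemma~\ref{lem:Hn}.3 so that they are subdivisions of caterpillars. The non-membership direction will be automatic: every linear $k$-leaf power is a $k$-leaf power, so Lemma~\ref{lem:Hn}.4 immediately gives that $H_n$ is not a linear $k$-leaf power. We will also use the fact that the class of linear $k$-leaf powers is closed under induced subgraphs, since deleting a leaf of a subdivided caterpillar and pruning any resulting degree-2 tail still yields a subdivided caterpillar.

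Inspection of Figures~\ref{fig:diam-leaf-root}, \ref{fig:tree-odd2}, and \ref{fig:tree-even2} shows that each of the leaf roots $T_{\B}$ and $R_I$ is already a subdivision of a caterpillar, with the distinguished vertices $b$, $t_I$, $b_I$ sitting as leaves of pendant paths attached at the endpoints of the spine. Thus, when we concatenate $R_I^1,\dots,R_I^n$ and $T_{\B}$ along the identifications $b_I^j=t_I^{j+1}$ (together with their subdivision-vertex parents) to build the right tree of Figure~\ref{fig:Trees}, the individual spines will merge into a single spine and every other leaf will remain attached by a pendant path. Consequently the right tree is a subdivision of a caterpillar, and $H_n-\T$ is a linear $k$-leaf power.

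The main difficulty will be the top gadget. The caterpillar $T_{\T}$ of Lemma~\ref{lem:top} places $t=v_{k-2}$ at a median spine vertex of $P$, so attaching $T_I^1$ there would create a T-junction and destroy the caterpillar structure. We sidestep this by redefining the top gadget: keep $\T':=P^{k-2}$ on the same $2k-3$ vertices but choose the distinguished vertex to be $t:=v_1$. The natural caterpillar leaf root then places $t$ at a spine endpoint and attaching $T_I^1$ extends the spine linearly into the interior gadgets. Property~1 of Lemma~\ref{lem:top}, namely $m_T(t)=3$ for every $k$-leaf root of $\T'$, should then follow from the same Wagner-type argument used in Section~\ref{sec:top}, now applied to the adjacent extremal pair $(v_1,v_2)$ in place of $(v_{k-3},v_{k-2})$: since $v_1$ and $v_2$ play the symmetric structural role in $P^{k-2}$ at one end of $P$, a direct adaptation of Lemma~2 of~\cite{WAGNER20095505} forces $d(v_1,v_2)\le 3$. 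This verification is the main obstacle of the proof. With it in hand, the left tree of Figure~\ref{fig:Trees} is a subdivision of a caterpillar, and $H_n-\B$ is a linear $k$-leaf power.

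With both auxiliary $k$-leaf roots now linear, the proof of Lemma~\ref{lem:Gn} carries over verbatim after replacing ``$k$-leaf power'' by ``linear $k$-leaf power'' throughout: define $G'_{k,n}$ to be a minimal induced subgraph of $H_n$ that is not a linear $k$-leaf power. Then $G'_{k,n}$ will be strongly chordal, will not be a linear $k$-leaf power, will have every proper induced subgraph a linear $k$-leaf power, and will contain at least $n$ vertices by the same connectivity-plus-distance argument. The resulting family $\{G'_{k,n}\}_{n\ge 0}$ is an infinite set of strongly chordal minimal obstructions, so no finite $\F_k$ of forbidden induced subgraphs can characterize the linear $k$-leaf powers within the strongly chordal graphs, proving Theorem~\ref{thm:caterpillar}.
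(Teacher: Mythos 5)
Your overall architecture matches the paper's: the witness roots $T_{\B}$, $T_I$ and $R_I$ from Lemmas~\ref{lem:bottom} and~\ref{lem:induction} are indeed already subdivisions of caterpillars with the distinguished vertices hanging off the ends of the spine, the concatenation of Figure~\ref{fig:Trees} therefore stays linear, non-membership of $H_n$ is inherited from the general case, and the minimality argument of Lemma~\ref{lem:Gn} transfers once one notes the class is hereditary. The paper reaches exactly the same conclusion that only the top gadget must be replaced.

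However, the step you flag as ``the main obstacle'' is not merely unverified --- it is false, so your replacement top gadget does not work. Taking $\T'=P^{k-2}$ on $v_1,\dots,v_{2k-3}$ with $t=v_1$ does not force $m_T(t)=3$, even among linear $k$-leaf roots. Concretely, for $k=5$ (so $P^3$ on seven vertices): take the spine $u_2u_3\cdots u_7$, attach $v_i$ to $u_i$ by an edge for $i=2,\dots,7$, and attach $v_1$ to $u_3$ by a pendant path of length $3$. Then $d(v_1,v_2)=d(v_1,v_4)=5$, $d(v_1,v_3)=4$, $d(v_1,v_5)=6$, $d(v_1,v_6)=7$, $d(v_1,v_7)=8$, and $d(v_i,v_j)=|i-j|+2$ for $2\leq i<j\leq 7$, so this subdivided caterpillar is a valid linear $5$-leaf root of $P^3$ realizing exactly the edges of $P^3$ but with $m_T(v_1)=4$. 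The symmetry you invoke does not exist: $v_{k-2}$ is adjacent to every other vertex except $v_{2k-3}$, and it is this near-domination that lets Wagner's lemma pin it to within distance $3$ of a neighbour, whereas $v_1$ has the smallest neighbourhood in $P^{k-2}$ and can be pushed away from the spine as above. This is precisely why the paper abandons $P^{k-2}$ for the linear setting and builds a genuinely new top gadget --- a clique $X=\set{x_1,\dots,x_{k-1}}$ together with attachment vertices $y_0,\dots,y_k$ of degrees $1$, $2$ and $3$ --- and proves, by an ordering-of-the-spine argument special to linear leaf roots, that the clique vertices must occupy consecutive spine positions at distance $1$, which forces $m_T(x_1)=3$. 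You would need such a new gadget (or a substantially different argument for an endpoint of $P$) to close this gap.
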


As in the general case, we prove this using three gadgets. However, we must now add a condition to ensure that merging the gadgets preserves having a subdivision of a caterpillar as a leaf root.

\begin{lemma}\label{lem:top-caterpillar}
For all $k\geq 5$ there exists a gadget graph $\T$ that contains a vertex $t\in V(\T)$ such that:
\begin{enumerate}
        \item For any linear $k$-leaf root $T$ of $\T$, $m_{T}(t) = 3$.
        \item There exists a linear $k$-leaf root $T_{\T}$ of  $\T$ where $t$ is a neighbor of the last node of the central path.
    \end{enumerate}
\end{lemma}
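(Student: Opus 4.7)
The plan is to reuse the same underlying graph $\T = P^{k-2}$ on the $2k-3$ vertices $v_1, \ldots, v_{2k-3}$ employed in Lemma~\ref{lem:top}, but to shift the distinguished vertex to $t = v_1$ (an endpoint of the path $P$) instead of $t = v_{k-2}$. The motivation is that in the natural caterpillar leaf root exhibited in Lemma~\ref{lem:top}, the vertex $v_1$ is already attached to an endpoint of the spine, whereas $v_{k-2}$ sits at an interior spine vertex; so taking $t = v_1$ should make property~2 essentially automatic.

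For property~1, I would verify that the argument used in Lemma~\ref{lem:top} applies equally well to the pair $(v_1, v_2)$ as to the pair $(v_{k-3}, v_{k-2})$ used there. In $\T$ the closed neighborhoods are $N[v_1] = \{v_1, \ldots, v_{k-1}\}$ and $N[v_2] = \{v_1, \ldots, v_k\}$, which differ by the single vertex $v_k$, mirroring the single-element difference $\{v_{2k-4}\}$ between $N[v_{k-3}]$ and $N[v_{k-2}]$. Invoking Lemma~2 of \cite{WAGNER20095505} on the pair $(v_1,v_2)$ then gives $d_T(v_1, v_2) \leq 3$ in any $k$-leaf root $T$ of $\T$, and combined with the paper's standing convention that two vertices with distinct neighborhoods are at distance at least~$3$ in any leaf root, this yields $m_T(t) = 3$ in every $k$-leaf root, and in particular in every linear $k$-leaf root.

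For property~2, I would take $T_\T$ to be exactly the caterpillar used in Lemma~\ref{lem:top}: spine $p_1 p_2 \cdots p_{2k-3}$ with each $v_i$ attached to $p_i$ by a single pendant edge. A caterpillar is trivially a subdivision of itself, so $T_\T$ qualifies as a linear $k$-leaf root. Its central path, obtained by deleting all leaves $v_1, \ldots, v_{2k-3}$, is exactly $p_1 p_2 \cdots p_{2k-3}$, and $t = v_1$ is adjacent to the endpoint $p_1$, which upon choosing the appropriate orientation we may designate as the ``last node'' of the central path.

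The main potential obstacle is confirming that Lemma~2 of \cite{WAGNER20095505} really does transfer from the interior pair $(v_{k-3}, v_{k-2})$ to the boundary pair $(v_1, v_2)$, since they are not related by any automorphism of $\T$. Because the hypothesis of that lemma should be purely local---a single-vertex symmetric difference of closed neighborhoods inside the same graph---the transfer is expected to be routine. Should this fail, I would fall back on a direct self-contained argument using Theorem~\ref{thm:4-PC} and Lemma~\ref{lem:closer} applied to a suitably chosen quadruple such as $\{v_1, v_2, v_k, v_{k+1}\}$ (in which $(v_1,v_k)$, $(v_1,v_{k+1})$ and $(v_2,v_{k+1})$ are the non-edges of $\T$) to derive $d_T(v_1, v_2) \leq 3$ from first principles.
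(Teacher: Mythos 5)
Your proposal has a fatal gap at property~1: it is not true that $m_T(v_1)=3$ in every (linear) $k$-leaf root of $P^{k-2}$. Concretely, for $k=5$ and $\T=P^3$ on $v_1,\dots,v_7$, take a spine $q_1q_2\cdots q_6$, attach each of $v_2,\dots,v_7$ by a pendant edge to $q_1,\dots,q_6$ respectively, and attach $v_1$ to $q_2$ by a pendant path of length~$3$. This tree is a subdivision of a caterpillar, and its leaf distances are $d(v_1,v_2)=d(v_1,v_4)=5$, $d(v_1,v_3)=4$, $d(v_1,v_5)=6$, $d(v_1,v_6)=7$, $d(v_1,v_7)=8$, and $d(v_i,v_j)=|i-j|+2$ for $2\le i<j\le 7$; one checks this realizes exactly the edges of $P^3$, so it is a valid linear $5$-leaf root, yet $m_T(v_1)=4$. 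The same trick works for every $k\ge 5$ by hanging $v_1$ on a pendant path of length roughly $(k+1)/2$ above the midpoint of its neighborhood $\{v_2,\dots,v_{k-1}\}$. The reason the argument for $(v_{k-3},v_{k-2})$ does not transfer to $(v_1,v_2)$ is that $v_{k-2}$ has neighbors and non-neighbors on \emph{both} sides of it along the path, which pins it close to the spine, whereas $v_1$'s neighborhood is one-sided, so $v_1$ can be pushed far from all other leaves without breaking any adjacency. In particular, Lemma~2 of \cite{WAGNER20095505} cannot reduce to ``closed neighborhoods differing in a single vertex force distance at most $3$,'' since the pair $(v_1,v_2)$ satisfies that hypothesis and sits at distance $5$ in the root above; your proposed fallback via the 4-point condition on $\{v_1,v_2,v_5,v_6\}$ also cannot succeed, because the distances listed above are genuine tree distances and hence satisfy the 4-point condition.

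The paper avoids this by abandoning $P^{k-2}$ altogether for the linear setting. It introduces a new gadget: a $(k-1)$-clique $X=\{x_1,\dots,x_{k-1}\}$ together with vertices $y_0,\dots,y_k$ where $y_i$ is adjacent to $\{x_{i-1},x_i,x_{i+1}\}\cap X$, so that every $x_i$ has private $y$-neighbors on both sides. The proof then exploits linearity directly: each $x_i$ must attach to a distinct vertex $O_i$ of the central path, the clique condition forces these $O_i$ to be consecutive with the extreme $x$'s at pendant distance $1$, and the two degree-one vertices $y_0,y_k$ force $x_1$ to occupy an extreme position, yielding $m_T(x_1)=3$ with $x_1$ adjacent to an end of the spine. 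Some such two-sided pinning structure around $t$ is essential; merely relocating $t$ to an endpoint of the path power does not work.
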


\begin{lemma}\label{lem:bottom-caterpillar}
For all $k\geq 5$ there exists a gadget graph $\B$ that contains a vertex $b \in V(\B)$ such that:
\begin{enumerate}
        \item For any linear $k$-leaf root $T$ of $\B$, $m_T(b) \leq k-1$.
        \item There exists a linear $k$-leaf root $T_{\B}$ such that $m_{T_{\B}}(b) = k-1$ where $b$ is a neighbor of the last node of the central path.
\end{enumerate}
\end{lemma}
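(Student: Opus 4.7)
The plan is to take $\B$ to be the same diamond graph from Lemma~\ref{lem:bottom}, i.e.\ the graph on $\{b, v_1, v_2, v_3\}$ whose unique non-edge is $\{v_1, v_3\}$, and to verify the two properties in turn. Property~1 would follow immediately: every linear $k$-leaf root is in particular a $k$-leaf root, so Corollary~\ref{cor:diamond} gives $d_T(b, v_2) \neq k$; combined with $d_T(b, v_2) \leq k$ (because $b v_2$ is an edge of $\B$) this forces $d_T(b, v_2) \leq k-1$, whence $m_T(b) \leq k-1$.

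For property 2, the spider-shaped leaf root constructed in Lemma~\ref{lem:bottom}, in which all four graph vertices attach to a single internal node, is \emph{not} the subdivision of a caterpillar once $k \geq 5$, so I would design a new linear $k$-leaf root $T_{\B}$. The plan is to use a caterpillar $C$ with a two-vertex spine $s_1 - s_2$; attach $v_1$ and $v_2$ as pendants of $C$ to $s_1$, and attach $v_3$ and $b$ as pendants of $C$ to $s_2$. Since $b$ is a leaf of $C$ adjacent to the last spine vertex $s_2$, the structural condition of property~2 is met at the level of $C$, independently of any subdivision lengths chosen afterwards.

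The subdivision lengths would then be fixed with a small parity split. For odd $k = 2q+1$ I would subdivide the branches to $v_1, v_2, v_3, b$ to lengths $q, q-1, q+1, q$ respectively, with spine edge of length $1$. For even $k = 2q$ the same caterpillar works with branch lengths $q, q-1, q, q-1$ and spine length $1$. A direct enumeration of the six pairwise tree-distances then confirms that only the pair $\{v_1, v_3\}$ is at distance greater than $k$ (precisely $k+1$), that every other pair is at distance at most $k$, and that $d_{T_{\B}}(b, v_2) = k-1$ realises the minimum among the three distances from $b$.

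The main obstacle is finding branch lengths that satisfy all six diamond-adjacency constraints simultaneously with the sharp requirement $m_{T_{\B}}(b) = k-1$. The inequalities are tight: they essentially force $\ell_{v_1} = \ell_{v_2} + 1$ and $\ell_{v_2} + 1 = \lfloor k/2 \rfloor$, leaving only a small parity-dependent choice for $\ell_{v_3}$ and $\ell_b$. Verifying that $d_T(v_1, v_3) > k$ holds \emph{strictly} (not merely $\geq k$) and that no distance from $b$ to another graph vertex drops below $k-1$ is the most delicate step; both are handled by the explicit parity-dependent assignment above.
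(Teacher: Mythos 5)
Your proof is correct in substance, but it takes a detour the paper does not need, and one of your justifications is off. For Property~1 you argue exactly as the paper does: a linear $k$-leaf root is in particular a $k$-leaf root, so Corollary~\ref{cor:diamond} forces $d_T(b,v_2)\le k-1$. For Property~2, however, your premise that the spider $T_{\B}$ from Lemma~\ref{lem:bottom} ``is not the subdivision of a caterpillar once $k\ge 5$'' is false: a star with four subdivided legs is a subdivision of a caterpillar (take the spine to run from the centre $O$ along $b$'s leg up to the parent of $b$, with the legs to $v_1,v_2,v_3$ pendant at $O$), and with that choice of spine $b$ is adjacent to the last spine node. This is precisely why the paper dispatches Lemma~\ref{lem:bottom-caterpillar} in one sentence, by observing that the original $T_{\B}$ already qualifies. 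Your replacement tree does work --- I checked all six pairwise distances in both parity cases, and they realise exactly the diamond with $m_{T_{\B}}(b)=k-1$ --- but your justification of the structural condition is the one genuinely shaky step: the requirement that $b$ be a neighbour of the last node of the central path is a condition on the subdivided tree $T_{\B}$, not on the caterpillar $C$ before subdivision, and in your tree the pendant path from $s_2$ to $b$ has length $q$ or $q-1\ge 2$, so the parent of $b$ is an internal subdivision vertex of that pendant path rather than $s_2$. The fix is the same re-rooting trick as above: declare the central path to be $s_1\to s_2\to\cdots\to\mathrm{parent}(b)$, with $v_3$'s pendant path hanging from the now-internal spine vertex $s_2$ and $b$ a pendant leaf of the last spine vertex. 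With that amendment your construction is a valid (if unnecessary) alternative to the paper's.
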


\begin{lemma}\label{lem:induction-caterpillar}
For all $k\geq 5$ there exists a gadget graph $I$ that contains two distinct vertices $t_I,b_I\in V(I)$ such that:
\begin{enumerate}
        \item For all linear $k$-leaf roots $T$ of $I$, $m_T(t_I)\geq k\Longrightarrow m_T(b_I)=3$.
        \item There exists a linear $k$-leaf root $T_I$ of $I$ such that $m_{T_I}(t_I)=k$ and $m_{T_I}(b_I)=3$ where $b$ and $t$ are neighbors of the first and last node of the central path respectively.
        \item There exists a linear $k$-leaf root $R_I$ of $I$ such that $m_{R_I}(t_I)=k-1$ and $m_{R_I}(b_I)=4$ where $b$ and $t$ are neighbors of the first and last node of the central path respectively.
    \end{enumerate}    
\end{lemma}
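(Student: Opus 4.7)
The plan is to reuse the interior gadget $I$ constructed in Section~\ref{sec:int} for Lemma~\ref{lem:induction} without any modification, and argue that it already satisfies the stronger ``linear'' version stated here. Thus there is nothing new to construct: all three properties will be deduced from the material already developed in the proof of Lemma~\ref{lem:induction}.

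For property~1, observe that every linear $k$-leaf root is in particular a $k$-leaf root. Hence the statement ``for every linear $k$-leaf root $T$ of $I$, $m_T(t_I)\geq k \Longrightarrow m_T(b_I)=3$'' quantifies over a subset of the trees already handled in property~1 of Lemma~\ref{lem:induction}, and so follows immediately with no further argument.

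For properties~2 and~3, the key observation is that the specific $k$-leaf roots $T_I$ and $R_I$ built in the proof of Lemma~\ref{lem:induction} (Figures~\ref{fig:tree-odd1}, \ref{fig:tree-odd2}, \ref{fig:tree-even1}, \ref{fig:tree-even2}) are \emph{already} subdivisions of caterpillars. Concretely, take the central path of the underlying (unsubdivided) caterpillar to be $O_1, O_2, \ldots, O_{q+1}$. The remaining named vertices $t, b, x_1, \ldots, x_q, y_2, \ldots, y_q$ (together with $z_1$ and $z_2$ in the even case) are leaves of this caterpillar, each joined by a single edge to one of the $O_i$'s; in $T_I$ and $R_I$ those edges appear in subdivided form as paths of the exact lengths drawn in the figures. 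Under this interpretation, $t$ is a leaf adjacent to the endpoint $O_1$ of the central path and $b$ is a leaf adjacent to the endpoint $O_{q+1}$, which exactly matches the boundary demand of properties~2 and~3.

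The only real obstacle is verifying that $t$ and $b$ sit at the two \emph{ends} of the central path and not somewhere in its interior, since this is what will allow the linear structure to be preserved when the gadgets are joined in series in the proof of Theorem~\ref{thm:caterpillar}. Inspection of the figures confirms that $t$ and $b$ are degree-one vertices of the tree, each joined to the rest of the tree through a single subdivided edge whose other endpoint is the first, respectively the last, $O_i$. No new distance calculations are needed, since the values of $m_{T_I}(t_I), m_{T_I}(b_I), m_{R_I}(t_I)$ and $m_{R_I}(b_I)$ have already been checked in the proof of Lemma~\ref{lem:induction}, and restricting to linear $k$-leaf roots does not change any of these distances.
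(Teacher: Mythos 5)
Your proposal is correct and follows exactly the paper's own argument: the paper likewise reuses the interior gadget of Lemma~\ref{lem:induction} verbatim, notes that property~1 is inherited because linear $k$-leaf roots form a subset of $k$-leaf roots, and observes that the trees $T_I$ and $R_I$ already drawn in Figures~\ref{fig:tree-odd1}--\ref{fig:tree-even2} are caterpillar subdivisions with $t$ and $b$ hanging off the two extremal spine vertices. Your extra remark about verifying that $t$ and $b$ attach at the ends of the spine (unlike the vertex $t$ of the original top gadget, which forces a new construction there) is precisely the point the paper relies on.
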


Merging the gadgets is identical to before except that we now use the condition that, in each gadget, $t$ and/or $b$ are neighbors of the extremal vertices of the central path. This means when merging the gadgets using the parent of these vertices, we merge the central paths by their endpoint to create a longer path, ensuring we produce another caterpillar subdivision.

We can verify that Lemma~\ref{lem:bottom-caterpillar} follows from Lemma~\ref{lem:bottom} and Lemma~\ref{lem:induction-caterpillar} follows from Lemma~\ref{lem:induction}, as our constructions for the interior gadget and the Bottom Gadget used for the general case satisfy the properties needed, namely the leaf root used in the proofs are subdivisions of caterpillars with the required vertices connected to the extremal vertices of the central path. On the other hand, the graph used to construct the Top Gadget does not satisfy this property\footnote{The Top Gadget for the case of general $k$-leaf powers is a caterpillar but the vertex $t$ used is not connected to an extremal vertex of the central path.}; so we need to construct a new graph for $\T$.

\begin{proof}[Proof of Lemma~\ref{lem:top-caterpillar}]
    The gadget graph $\T$ consists of a $k-1$ clique with vertices $X=\set{x_1,\dots,x_{k-1}}$, to which we add vertices $Y=\set{y_0,\dots,y_{k}}$ such that the neighborhood of $y_i$ is $X\cap \set{x_{i-1},x_{i},x_{i+1}}$ (that is, $y_i$ has 3 neighbors if $i=2,\dots,k-2$, $y_1$ and $y_{k-1}$ have two neighbors and $y_0$ and $y_{k}$ have one neighbor). This gadget is illustrated in Figure~\ref{fig:linear-top}\footnote{We remark that a more careful analysis of this family of graphs might also work for the general case.}.
  \begin{figure}[h!]
        \centering
        \begin{tikzpicture}
            \node (X1) at (180:3) {$x_1$};
            \node (X2) at (150:3) {$x_2$};
            \node (X3) at (120:3) {$x_3$};
            \node (Xdots) at (90:3) {$\dots$};
            \node (XK-3) at (60:3) {$x_{k-3}$};
            \node (XK-2) at (30:3) {$x_{k-2}$};
            \node (XK-1) at (0:3) {$x_{k-1}$};
            
            \node (Y0) at (180:5) {$y_0$};
            \node (Y1) at (165:5) {$y_1$};
            \node (Y2) at (140:5) {$y_2$};
            \node (Y3) at (125:5) {$y_3$};
            \node (Ydots) at (90:5) {$\dots$};
            \node (YK-3) at (55:5) {$y_{k-3}$};
            \node (YK-2) at (40:5) {$y_{k-2}$};
            \node (YK-1) at (15:5) {$y_{k-1}$};
            \node (YK) at (0:5) {$y_k$};

            \draw (X1) -- (X2) -- (X3) -- (XK-3) -- (XK-2) -- (XK-1) -- (X1);
            \draw (X1) -- (X3) -- (XK-2) -- (X1);
            \draw (X2) -- (XK-3) -- (XK-1) -- (X2);
            \draw (X1) -- (XK-3);
            \draw (X2) -- (XK-2);
            \draw (X3) -- (XK-1);

            \draw (Y0) -- (X1);

            \draw (Y1) -- (X1);
            \draw (Y1) -- (X2);

            \draw (Y2) -- (X1);
            \draw (Y2) -- (X2);
            \draw (Y2) -- (X3);

            \draw (Y3) -- (X2);
            \draw (Y3) -- (X3);
            \draw (Y3) -- (Xdots);

            \draw (YK-3) -- (Xdots);
            \draw (YK-3) -- (XK-3);
            \draw (YK-3) -- (XK-2);

            \draw (YK-2) -- (XK-3);
            \draw (YK-2) -- (XK-2);
            \draw (YK-2) -- (XK-1);

            \draw (YK-1) -- (XK-2);
            \draw (YK-1) -- (XK-1);

            \draw (YK) -- (XK-1);
        \end{tikzpicture}
        \caption{The top gadget for linear leaf roots.}\label{fig:linear-top}
    \end{figure}
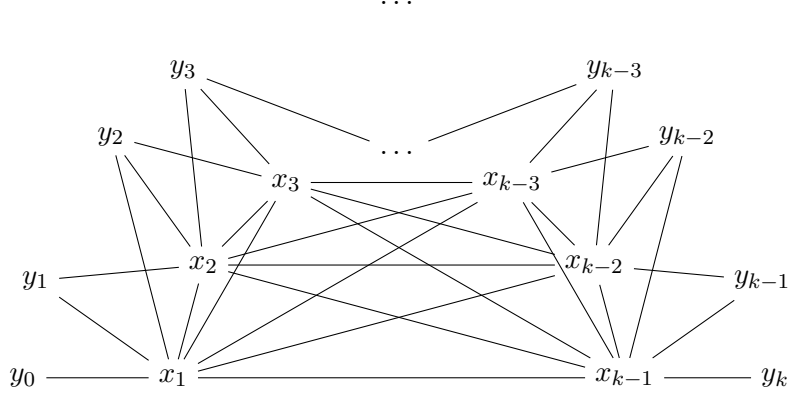

    Let $T$ be any linear $k$-leaf root of $\T$.
    For $1\leq i\leq k-1$, let $O_i$ be the vertex on the central path closest to $x_i$ in $T$. We wish to show that 
    for $1\leq i<j\leq k-1$, $O_i\neq O_j$. For a contradiction, assume that $O_i=O_j=O$, for some $i<j$. Now consider $y_{i-1}$ and $y_{j+1}$.  Since $i<j$, $y_{i-1}$ is not a neighbor of $x_j$ and $y_{j+1}$ is not a neighbor of $x_i$ in $\T$. In $T$, every path from a leaf to $x_i$ or to $x_j$ must go through $O$. In particular, for all leaves $v$ of $T$ we have $d(x_i,O)-d(x_j,O)=d(x_i,v)-d(x_j,v)$. However, we must have $d(x_i,y_{i-1})<d(x_j,y_{i-1})$ and $d(x_i,y_{j+1})>d(x_j,y_{j+1})$. This implies that $0>d(x_i,y_{i-1})-d(x_j,y_{i-1})=d(x_i,O)-d(x_j,O)=d(x_i,y_{j+1})-d(x_j,y_{j+1})>0$, a contradiction. Thus $O_i\neq O_j$.

    Since $X$ forms a clique, all of its vertices must be within distance $k$ from one another in $T$. Moreover, they must also all be connected to a different vertex of the path. Let $\ell$ and $r$ be the endpoints of the central path. Let $O_\ell$ be the $O_i$ closest one to $\ell$ and $O_r$ the closest one to $r$. (Note that $O_\ell$ and $O_r$ are not necessarily $\ell$ and $r$ since these only take into account $X$ and not $Y$.) 
    
    Every $O_i$ must be contained in the path from $O_\ell$ to $O_r$. In particular, the distance from $x_\ell$ to $x_r$ is $k$ if and only if there is no vertex in the path which is not $O_i$ for any $i$, and both $x_\ell$ and $x_r$ are at distance one from $O_\ell$ and $O_r$, respectively. In particular, there exists a permutation $\sigma$ of $\set{1,\dots,k-1}$ such that the path from $O_\ell$ to $O_r$ is exactly $O_\ell=O_{\sigma(1)}\rightarrow O_{\sigma(2)}\rightarrow \dots\rightarrow O_{\sigma(k-2)}\rightarrow O_{\sigma(k-1)}=O_r$.
    
    Consider $x_{\sigma(1)} = x_{\ell}$ and $x_{\sigma(2)}$. By construction, one of $y_{\sigma(2)-1}$ or $y_{\sigma(2)+1}$ is not a neighbor of $x_{\sigma(1)}$ but is a neighbor of $x_{\sigma(2)}$. Let $y_{\sigma(2)\pm 1}$ denote the one which satisfies this property (or either of them if both satisfy it). 
    The path from $y_{\sigma(2)\pm 1}$ to $x_{\sigma(1)}$ and the path from $y_{\sigma(2)\pm 1}$ to $x_{\sigma(2)}$ must both go through $O_{\sigma(1)}$ or both go through $O_{\sigma(2)}$. Since $d(x_{\sigma(1)},O_{\sigma(1)})=1$, $x_{\sigma(1)}$ is closer to $O_{\sigma(1)}$ than $x_{\sigma(2)}$, therefore both paths cannot go through $O_{\sigma(1)}$ otherwise $y_{\sigma(2)\pm 1}$ would be closer to $x_{\sigma(1)}$
    than $x_{\sigma(2)}$. Moreover, since $d(x_{\sigma(1)},O_{\sigma(2)})=d(x_{\sigma(1)},O_{\sigma(1)})+d(O_{\sigma(1)}, O_{\sigma(2)}) = 2$ and $y_{\sigma(2)\pm 1}$ is  closer to $O_{\sigma(2)}$ than to $O_{\sigma(1)}$, we must have $d(x_{\sigma(2)},O_{\sigma(2)})<2$. Consequently, $d(x_{\sigma(2)},O_{\sigma(2)})= 1$.
    It immediately follows that $y_{\sigma(2)\pm 1}$ is at distance $k$ from $x_{\sigma(2)}$ and at distance $k+1$ from $x_{\sigma(1)}$. 
    
    Similarly, we can show that $x_{\sigma(1)}$ has at least one neighbor, denoted $y_{\sigma(1)\pm 1}$, which is not a neighbor of $x_{\sigma(2)}$. This implies that its distance to $O_{\sigma(1)}$ is $k-1$. However, all $x_i$, for $i\neq \sigma(1)$, are at distance at least $2$ from $O_{\sigma(1)}$. So there exists $y_{\sigma(1)\pm 1}$ which has no neighbor in $X$ except $x_{\sigma(1)}$. 

    The same argument shows that $y_{\sigma(k-1)\pm 1}$ has no neighbor in $X$ except $x_{\sigma(k-1)}$.
    But the only vertices in $Y$ of degree 1 are $y_0$ and $y_{k}$. Therefore, either $1=\sigma(1)$ or $1=\sigma(k-1)$. In either case $x_1$ is at distance exactly $3$ from another (leaf) vertex, namely, either $x_{\sigma(2)}$ or $x_{\sigma(k-2)}$.  Next observe that no two distinct leaves of $T$ can be at distance $2$. This is because  no two vertices of $\T$ have the same set of neighbors. Hence, taking $t=x_1$, we obtain $m_T(t)=3$.

    It remains to find a linear $k$-leaf root satisfying the second property in the lemma. 
    We build this tree $T_{\T}$ as follows:
    \begin{enumerate}
        \item Take a path of length $k$ from $O_1$ to $O_{k-1}$.
        \item Label the vertices along the path from $x_1$ to $x_{k-1}$ which are at distance $i$ from $x_1$ as $O_i$ for $i=1,\dots,k-1$.
        \item Add a path of length $1$ from $O_i$ to $x_i$ for $i=2,\dots,k-2$.
        \item Add a path of length $k-2$ from $O_i$ to $y_i$ for $i=1,\dots,k-1$.
        \item Add a path of length $k-1$ from $O_1$ to $y_0$ and from $O_{k-1}$ to $y_k$.
    \end{enumerate}

    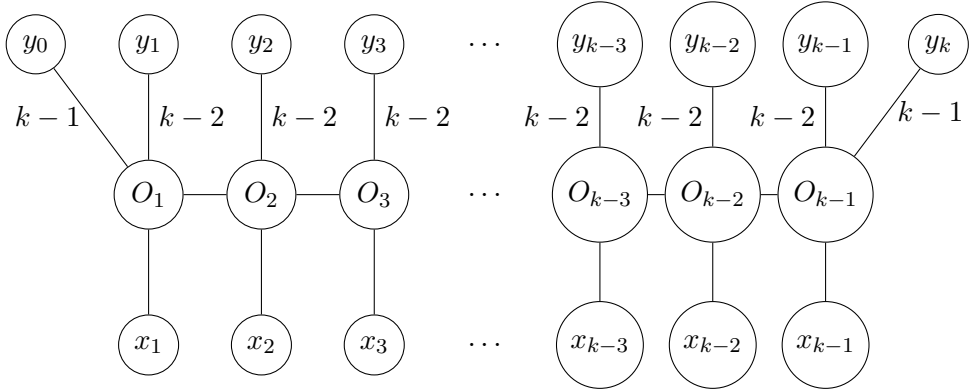
\begin{figure}[h!]
        \centering
            \begin{tikzpicture}
                \node[circle,draw] (O1) at (-4.5,0) {$O_1$};
                \node[circle,draw] (O2) at (-3,0) {$O_2$};
                \node[circle,draw] (O3) at (-1.5,0) {$O_3$};
                
                \node[circle,draw] (X1) at (-4.5,-2) {$x_{1}$};
                \node[circle,draw] (X2) at (-3,-2) {$x_2$};
                \node[circle,draw] (X3) at (-1.5,-2) {$x_3$};
                    
                \node[circle,draw] (Y0) at (-6,2) {$y_{0}$};
                
                \node[circle,draw] (Y1) at (-4.5,2) {$y_{1}$};
                \node[circle,draw] (Y2) at (-3,2) {$y_2$};
                \node[circle,draw] (Y3) at (-1.5,2) {$y_3$};
                
                \node (dotsX) at (0,2) {$\dots$};
                \node (dots) at (0,0) {$\dots$};
                \node (dotsY) at (0,-2) {$\dots$};
                    
                \node[circle,draw] (OK-3) at (1.5,0) {$O_{k-3}$};
                \node[circle,draw] (OK-2) at (3,0) {$O_{k-2}$};
                \node[circle,draw] (OK-1) at (4.5,0) {$O_{k-1}$};
                
                \node[circle,draw] (XK-3) at (1.5,-2) {$x_{k-3}$};
                \node[circle,draw] (XK-2) at (3,-2) {$x_{k-2}$};
                \node[circle,draw] (XK-1) at (4.5,-2) {$x_{k-1}$};
                
                \node[circle,draw] (YK-3) at (1.5,2) {$y_{k-3}$};
                \node[circle,draw] (YK-2) at (3,2) {$y_{k-2}$};
                \node[circle,draw] (YK-1) at (4.5,2) {$y_{k-1}$};
                \node[circle,draw] (YK) at (6,2) {$y_{k}$};
                
                \draw (O1) -- (O2) -- (O3);
                \draw (OK-3) -- (OK-2) -- (OK-1);
                
                \draw (O1) -- (X1);
                \draw (O2) -- (X2);
                \draw (O3) -- (X3);
        
                \draw (OK-3) -- (XK-3);
                \draw (OK-2) -- (XK-2);
                \draw (OK-1) -- (XK-1);
                
                \draw (O1) -- (Y0) node [midway, left] {$k-1$};
                \draw (O1) -- (Y1) node [midway, right] {$k-2$};
                \draw (O2) -- (Y2) node [midway, right] {$k-2$};
                \draw (O3) -- (Y3) node [midway, right] {$k-2$};
        
                \draw (OK-3) -- (YK-3) node [midway, left] {$k-2$};
                \draw (OK-2) -- (YK-2) node [midway, left] {$k-2$};
                \draw (OK-1) -- (YK-1) node [midway, left] {$k-2$};
                \draw (OK-1) -- (YK) node [midway, right] {$k-1$};
            \end{tikzpicture}
        \caption{The linear $k$-leaf root $T_{\T}$ for the top gadget.}
        \label{fig:caterpilar-Leaf-Root}
    \end{figure}
    This tree is shown in Figure~\ref{fig:caterpilar-Leaf-Root}.
    It is easy to verify that this tree does induce $X$ to form a clique in its $k$-leaf power graph, and that $y_i$ is only adjacent to $\set{x_{i-1},x_i,x_{i+1}}\cap X$. Moreover, taking $t = x_1$ again yields the desired result.
\end{proof}
We remark that this result does not immediately imply that there is no characterization for the entire class of linear leaf powers using chordal graphs and a finite number of forbidden induced subgraphs.
We have proven that such a characterization is impossible for each $k\geq 5$, but not necessarily for the union over all $k$. It was proved by Bergougnoux et al.~\cite{Bergougnoux} that linear leaf powers are also co-threshold tolerance. Further work on this could include verifying whether co-threshold tolerance graphs can be characterized using a finite number of obstructions.
Furthermore, it is worth noting that, despite Lemma~\ref{thm:caterpillar}, linear leaf powers are recognizable in polynomial time~\cite{Bergougnoux}. 

\section{Acknowledgements}

We would like to thank the Montreal Game Theory Workshop for helping bring together our research group.

\section{Conclusion}
We have shown that $k$-leaf powers require a deeper characterization than strong chordality with a finite set of forbidden induced subgraphs. Several directions  to explore remain in order to gain a more comprehensive understanding of $k$-leaf power graphs.
First, is it possible to construct and/or characterize minimal, strongly chordal graphs that are not $k$-leaf powers?  We were able to construct graphs $H_n$ that \emph{contain} such minimal examples as induced subgraphs; but we did not construct those examples explicitly.
Following this line of reasoning, it may be possible to characterize $k$-leaf powers as strongly chordal graphs that also forbid an additional infinite, but easy-to-describe family of forbidden subgraphs. A famous example of this are interval graphs, which are the chordal graphs containing no asteroidal triples~\cite{LB62}.

Second, are there relevant subclasses of $k$-leaf powers that can be characterized by strong chordality and a finite set of forbidden induced subgraphs?  For example, the $k$-leaf powers whose $k$-leaf roots admit a subdivision of a star should be easy enough to characterize. What about subdivisions of a tree with a small number, say two or three, of non-leaf vertices?   
One may also consider the $k$-leaf powers of caterpillars (not subdivided). Based on the midpoint arguments of Brandstädt et al.~\cite[Theorem 6]{BRANDSTADT2010897}, it would appear that, for even $k$, these coincide with the unit interval graphs whose intervals have length $k-2$ and integer endpoints.  Such (twin-free) graphs were shown to admit a finite set of forbidden induced subgraphs in~\cite{duran2015unit}. If this characterization extends to caterpillar $k$-leaf powers, this would show that taking subdivisions is necessary for our result on caterpillar graphs. 
It may also be interesting to characterize $k$-leaf powers for other graph classes that are known to be contained in $\L$; for instance, $k$-leaf powers that are also ptolemaic graphs, interval graphs, rooted directed path graphs, and others.




\bibliographystyle{plain}
\bibliography{biblio}

\end{document}